\documentclass[10pt]{amsart}
\usepackage[english]{babel}
\usepackage{bbm,euscript,amssymb,amsthm}
\usepackage{amsfonts}
\usepackage{fancybox}
\usepackage{pstricks,pstricks-add,pst-math,pst-xkey}
\usepackage{color}
\usepackage{graphics,graphicx}
\usepackage{subfigure}
\usepackage{lineno}
\usepackage[linesnumbered,lined,commentsnumbered] {algorithm2e}
\usepackage[colorlinks,linkcolor=black,citecolor=black,urlcolor=black,breaklinks]{hyperref}

\newtheorem{thm}{Theorem}
\newtheorem{lemma}[thm]{Lemma}

\newtheorem{proposition}[thm]{Proposition}

\newtheorem{definition}[thm]{Definition}

\thispagestyle{empty}

\def\ss{{\mathcal S}}
\def\pp{{\mathcal P}}
\def\qq{{\mathcal Q}}
\def\dd{{\mathcal D}}
\def\ee{{\mathcal E}}
\def\cc{{\mathcal C}}

\def\RR{{\mathbb R}}
\def\diam{{\hbox{\rm diam}}}


\begin{document}

\title{Mutual-visibility of the disjointness graph of segments in ${\mathbb R}^2$}

\maketitle

\author{J. Lea\~nos$^1$,  M. Lomel\'i-Haro$^2$, Christophe Ndjatchi$^3$ \and L. M. R\'ios-Castro$^{4}$}

\noindent\footnote{Unidad Acad\'emica de Matem\'aticas, Universidad Aut\'onoma de Zacatecas, Calzada Solidaridad y Paseo La Bufa, Col. Hidr\'aulica, CP. 98060, Zacatecas Zac. M\'exico, \url{jleanos@uaz.edu.mx}

$^2$Instituto de F\'isica y Matem\'aticas, Universidad Tecnol\'ogica de la Mixteca, M\'exico, \url{lomeli@mixteco.utm.mx}

$^3$Academia de F\'isico-Matem\'aticas, Instituto Polit\'ecnico Nacional, UPIIZ,
  P.C. 098160, Zacatecas, M\'exico, \url{mndjatchi@ipn.mx}

$^{4}$Academia de F\'isico-Matem\'aticas, Instituto Polit\'ecnico Nacional, CECYT18, Zacatecas,
  P.C. 098160, Zacatecas, M\'exico, \url{lriosc@ipn.mx}}

\begin{abstract}
 Let $G=(V(G),E(G))$ be a simple graph, and let $U\subseteq V(G)$. Two distinct vertices $x,y\in U$ are {\em $U$-mutually visible} if $G$ contains a shortest $x$-$y$ path that is internally disjoint from $U$. $U$ is called a {\em mutual-visibility set of} $G$ if any
 two vertices of $U$ are $U$-mutually visible.   The {\em mutual-visibility number} $\mu(G)$ of $G$ is the size of a largest mutual-visibility set of $G$.

Let $P$ be a set of $n\geq 3$ points in ${\mathbb R}^2$ in general position. The {\em disjointness graph of segments} $D(P)$ of $P$ is the graph whose vertices are all the closed straight line  segments with endpoints in $P$, two of which are adjacent in $D(P)$ if and only if they are disjoint. In this paper we establish tight lower and upper bounds for $\mu(D(P))$, and show that almost all edge disjointness graphs have diameter 2.
\end{abstract}

\vspace{0.1cm} \small{ {\it Keywords:}  Disjointness graph of segments; Mutual-visibility; Rectilinear drawings of complete graphs.

{\it AMS Subject Classification numbers:}  05C10, 05C12, 05C62.}

\section{Introduction}

Let $G=(V(G),E(G))$ be a graph. We recall that a sequence $H:=u_0, u_1, \ldots, u_m$ of pairwise distinct vertices is a \emph{path} of \(G\) if \(u_i u_{i+1} \in E(G)\) for all \(0 \leq i < m\). The {\em length} of a path is its number of edges. If $u$ and $v$ are vertices of $V(G)$, then a
{\em $u$-$v$} path is a path with endpoints $u$ and $v$. The graph \(G\) is said to be \emph{connected} if, for every pair of distinct vertices
 \(u, v \in V(G)\), there exists a $u$-$v$ path in \(G\). If $G$ is not connected, then $G$ is {\em disconnected} and a maximal connected
 subgraph of $G$ is a {\em component} of $G$. In a connected graph \(G\), the {\em distance} $d_G(u,v)$ between $u$ and $v$ is the length of a shortest $u$-$v$ path, and the {\em diameter} $\diam(G)$ of $G$ is the greatest distance between any pair of vertices of $G$. If
 $G$ is disconnected, then $\diam(G)=\infty$.

Let \(U\) be a subset of $V(G)$. Two vertices of $U$ are $U$-\emph{mutually visible} if $G$ contains a shortest path between them that does not contain any other vertices of \(U\).  \(U\) is called a \emph{mutual-visibility set} if its points are pairwise mutually visible.
The \emph{mutual-visibility number} of \(G\), denoted by \(\mu(G)\), is the size of any largest mutual-visibility set of \(G\). It is not hard to see that if $G$ is disconnected and $G_1,\ldots ,G_k$ are the components of $G$,  then $\mu(G)=\max\{\mu(G_1), \ldots ,\mu(G_k)\}$.

Let $P$ be a set of $n \geq 3$ points in the plane in general position, i.e., no three points in $P$ are collinear.
The {\em disjointness graph of segments} $D(P)$ is the graph whose vertices correspond to the closed straight-line segments with endpoints in $P$. Two vertices are adjacent if and only if the corresponding segments are disjoint. Figure~\ref{fig:Ejem}~($a$) illustrates a $7$-point set in general position $P$, its associated set of segments $\mathcal{P}$ in Figure~\ref{fig:Ejem}~($b$), and the corresponding graph $D(P)$ together with a maximum mutual-visibility set in Figure~\ref{fig:Ejem}~($c$).

Our goal in this paper is to study the mutual-visibility number of the disjointness graphs of segments.

\begin{figure}
	\centering
	\includegraphics[width=0.75\textwidth]{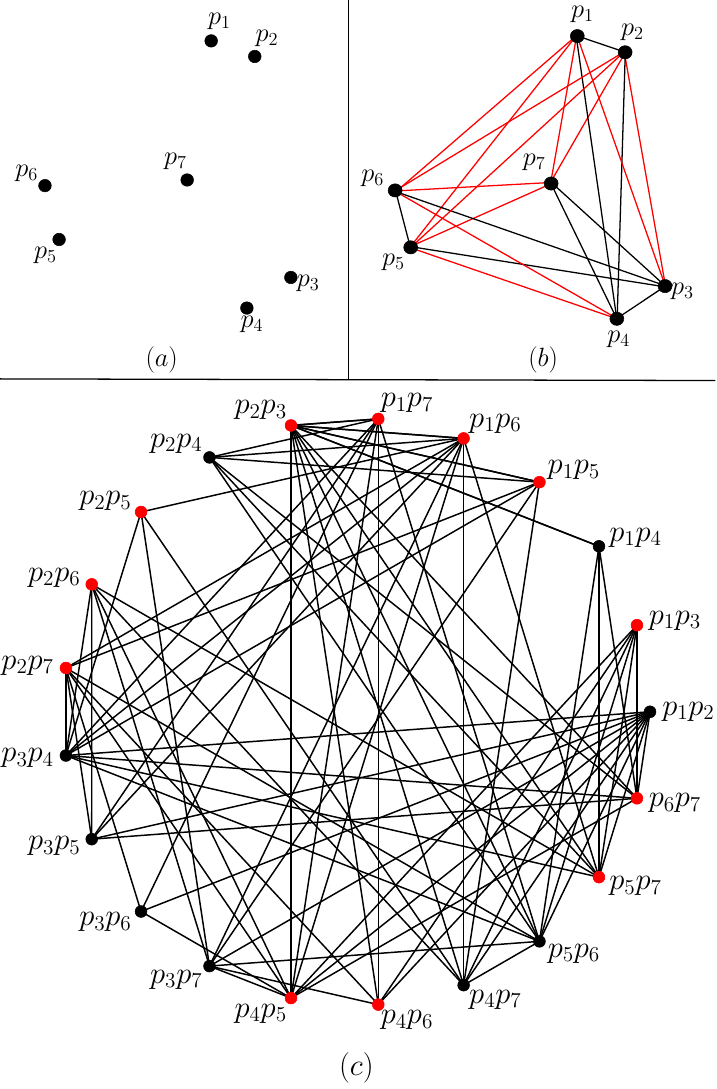}
	\caption{\small The $7$-point set in general position $\{p_1,p_2, \ldots, p_7\}$ in ($a$) is $P$. In particular, we note that
	$\widehat{P} =P\setminus \{p_7\}$. In $(b)$ we have $\pp$, which can be seen as the rectilinear drawing of $K_7$ induced by $P$. Observe that $CH(P)$ is the convex hexagon formed by the union of segments $p_1 p_2$, $p_2 p_3$, $p_3 p_4$, $p_4 p_5$, $p_5p_6$, and $p_6p_1$. The graph in $(c)$ is $D(P)$ and has diameter $3$. The red vertex set is a visible set of $D(P)$ of size $12$. The red vertices~in~($c$)
	correspond to the red segments in ($b$).}
	\label{fig:Ejem}
\end{figure}
The following two subsections offer a brief historical overview of mutual visibility and disjointness graphs of segments, highlighting key prior results.

\subsection{Mutual visibility in graphs}
In 2022, Stefano introduced the concept of mutual visibility \cite{mv1}, drawing inspiration from classical Euclidean geometry problems like Sylvester's and the ``no-three-in-line" problems. Stefano \cite{mv1} demonstrated that identifying a mutual-visibility set exceeding a specified size is NP-complete, while verifying a set's mutual visibility is solvable in polynomial time. Initial connections to other graph-theoretic parameters were also established.

Building on this foundation, Cicerone, Stefano, and Klav\v{z}ar \cite{mv2} reduced the computation of $\mu(K_m \square K_n)$ to the well-known Zarankiewicz problem and characterized triangle-free graphs $G$ with $\mu(G) = 3$.

In 2024, Cicerone, Stefano, Klav\v{z}ar, and Yero \cite{mv3} introduced variations of mutual visibility, including total, outer, and dual mutual visibility, and studied the corresponding parameters in diameter-two graphs. Their analysis focused on these parameters within diameter-two graphs, exploring potential research avenues and examining computational complexity.

Another variant, the ``mobile mutual-visibility number" Mob$_{\mu}(G)$, was defined in 2024 by Dettlaff et al. \cite{mv4}, where robots placed on a mutual-visibility set must maintain the property while traversing the graph. It was shown that Mob$_{\mu}(G)  \leq  \mu(G)$, with subsequent work identifying graphs where equality holds and proving that finding the mobile mutual-visibility number is NP-hard.

Finally, exact values for the mutual-visibility number in Kneser, bipartite Kneser, and Johnson graphs were provided by Ekinci and Bujt\'as in 2025. Their work also established connections between total mutual visibility in Kneser and bipartite Kneser graphs and the minimum size of covering designs \cite{mv5}. Further results on this topic can be found in the references of \cite{bujtas,mv3,mv4,mv5}.


\subsection{Disjointness graphs of segments in the plane}

The concept of disjointness graphs of segments in the plane was introduced by Araujo, Dumitrescu, Hurtado, Noy, and Urrutia in 2005 \cite{gaby} as a geometric counterpart to Kneser graphs. Recall that for $k, m \in \mathbb{Z}^+$ with $k \leq m/2$, the Kneser graph $KG(m, k)$ has vertices representing the $k$-subsets of $\{1, 2, \ldots, m\}$, with edges connecting disjoint $k$-subsets. In 1978 \cite{lovasz} Lov\'asz proved that $\chi(KG(m, k))=m-2k+2$, which was conjectured in 1955 by Kneser~\cite{kneser}.

 The chromatic number \(\chi(D(P))\) of \(D(P)\) has been investigated in \cite{gaby,jesus,lomeli,ruy,jonsson}. However, determining the exact value of \( \chi(D(P)) \) remains an open problem. A general lower bound for \( \chi(D(P)) \) was established in \cite{gaby}. The exact value of
 \( \chi(D(P)) \) is known for two families of point sets: for sets of \( n \) points in convex position $C_n$, \( \chi(D(C_n)) = n - \lfloor\sqrt{2n+\frac{1}{4}}-\frac{1}{2}\rfloor \) (see \cite{ruy,jonsson}), and for  double chains with \( k \) points in the upper convex chain and \( l \geq k \) points in the lower convex chain $C_{l,k}$, \( \chi(D(C_{l,k})) = k + l - \lfloor\sqrt{2l+\frac{1}{4}}-\frac{1}{2}\rfloor \) (see \cite{lomeli}). It is well known that \( \chi(D(P))\leq |P|-2\), and recently Garc\'ia-Davila et al. \cite{jesus} proved that this upper bound is attained only if \( |P|\in \{3,4,\ldots ,16\} \).

Pach, Tardos, and T\'oth investigated the chromatic number and clique number of $D(P)$ in $ \mathbb{R}^d$ for $d \geq 2$ in \cite{pach-tardos-toth}. Specifically, they demonstrated that $\chi(D(P))$ is bounded above by a polynomial function of its clique number $\omega(D(P))$ and that determining either $\chi(D(P))$ or $\omega(D(P))$ is NP-hard.

The connectivity number $\kappa(D(P))$ of $D(P)$ has been studied in \cite{us2} and \cite{us1}, establishing the following upper and lower bounds, respectively:

$$\binom{\lfloor\frac{n-2}{2}\rfloor}{2}+\binom{\lceil\frac{n-2}{2}\rceil}{2} \leq \kappa(D(P))\leq \frac{7n^2}{18}+\Theta(n).$$

We recall that $\mathcal{Q} \subseteq V(D(P))$ is independent in $D(P)$ if no two elements of $\mathcal{Q}$ are adjacent. While Aichholzer, Kyn\v{c}l, Scheucher, and Vogtenhuber established an asymptotic upper bound on the maximum size of certain independent sets in $D(P)$ \cite{birgit}, Lea\~nos, Ndjatchi, and R\'ios-Castro \cite{hamiltonicity} more recently provided a complete characterization of all those segment disjointness graphs that are Hamiltonian.

\subsection{Basic notations and conventions} Throughout this paper, $P$ is a set of $n\geq 3$ points in general position in the plane.  If $x$ and $y$ are distinct points in $P$, then we will use $xy$ to denote the closed line segment with endpoints $x$ and $y$. Similarly, $\pp$ will denote the set of segments $\{xy~:x, y \in P \text{ and } x \neq y\}$, and its elements will be referred to as the {\em segments} of $\pp$. Thus, $\pp$ constitutes the vertex set of $D(P)$. Often, no distinction will be made between an element of $\pp$ and its corresponding vertex in $D(P)$.

If $e\in \pp$ and $x$ is an endpoint of $e$, we simply write $x\in e$. We shall use $\ell(xy)$ to denote the directed line spanned by $x$ and $y$, directed from $x$ towards $y$, and $\ell^+(xy)$ (respectively, $\ell^-(xy)$) to denote the open half-plane lying on the right (respectively, left) side of $\ell(xy)$. We remark that $\pp$ naturally defines a rectilinear drawing of the complete graph $K_n$ in the plane.
Let $x_1 y_1$ and $x_2 y_2$ be two distinct elements of $\pp$, and suppose that $x_1 y_1 \cap x_2 y_2 \neq \emptyset$. Then, their intersection consists exactly of one point $o \in \mathbb{R}^2$, since $P$ is in general position. If $o$ is an interior point of both $x_1 y_1$ and $x_2 y_2$, we say that they {\em cross} at $o$.  If the segment $xy\in \pp$ is not crossed by any other segment of $\pp$, then we will say that $xy$ is {\em clean in} $\pp$.

If $m\in \{3,4,\ldots\}$ and  $q_1, q_2, q_3, \ldots , q_m$ are points in general position in the plane, then $Q:=q_1q_2\cdots q_m$ will denote the polygon formed by the segments $q_1q_2$, $q_2q_3, \ldots $, $q_{m-1}q_m$, $q_mq_1$. If $m=3$ (resp. $m=4$),
then $Q$ will be called a {\em triangle} (resp. {\em quadrilateral}), and so on.

As usual, we will denote by $CH(P)$ the boundary of the convex hull of $P$, and by $\widehat{P}$ the set $P \cap CH(P)$. If $x$ and $y$ are distinct points of $\widehat{P}$ and $xy\notin CH(P)$, then $xy$ is a {\em diagonal} of $\pp$. See Figure~\ref{fig:Ejem}~(b).

If $m$ is a nonnegative integer, then $[m] := \emptyset$ if $m = 0$, and $[m] := \{1, \ldots, m\}$ if $m > 0$.


\subsection{Main results} Our goal in this section is to present our main results about the mutual-visibility number of the disjointness graphs of segments.

Recall that if \( G \) is a disconnected graph and \( G_1, \ldots, G_k \) are its connected components, then \( \mu(G) = \max\{\mu(G_1), \ldots, \mu(G_k)\} \). Furthermore, \( D(P) \) is connected if and only if \( n = |P| \geq 5 \) (see for instance, \cite{us2}). In light of these two facts, for the remainder of this paper, it suffices to consider the case where \( P \) is an \( n \)-point set in general position in the plane with \( n \geq 5 \), which ensures that \( D(P) \) is always connected.

Our main results are the following.

\begin{thm}\label{p:diam} Let $P$ be a set of $n\geq 5$ points in general position in the plane. Then
$\diam(D(P))\in \{2,3,4\}$ if $n=5$, $\diam(D(P))\in \{2,3\}$  if $n\in \{6,7,8\}$, and $\diam(D(P))=2$ if $n\geq 9$.
\end{thm}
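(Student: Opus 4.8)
The strategy is to treat the diameter of $D(P)$ as a statement about how quickly any two segments $e,f\in\pp$ can be ``connected'' by a short chain of pairwise-disjoint segments, and to split the analysis according to whether $e$ and $f$ are disjoint, share an endpoint, or cross. If $e\cap f=\emptyset$ then $d_{D(P)}(e,f)=1$, so the real work is the other two cases. For the upper bounds I would argue as follows. Call a segment $g$ \emph{good for the pair} $(e,f)$ if $g$ is disjoint from both $e$ and $f$; then $d_{D(P)}(e,f)\le 2$ precisely when such a $g$ exists, and $d_{D(P)}(e,f)\le 3$ when there are disjoint segments $g,h$ with $g$ disjoint from $e$ and $h$ disjoint from $f$. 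The key geometric fact to extract is: given two segments spanning at most $4$ points of $P$, the remaining $n-4$ (or $n-3$, or $n-2$) points, being in general position, contain a pair of points whose connecting segment avoids the (bounded, ``thin'') obstacle formed by $e\cup f$. I would make this precise by a convex-position / pigeonhole argument — for instance, the lines through the endpoints of $e$ and $f$ cut the plane into a bounded number of cells, and once $n$ is large enough some cell contains two points of $P$ far enough from the obstacle that their segment is clean relative to $e\cup f$. Quantifying exactly when ``large enough'' kicks in at $n=9$ is the crux; for $6\le n\le 8$ one settles for the weaker conclusion $\diam\le 3$, and for $n=5$ one only claims $\diam\le 4$, each by exhibiting a bounding path of the appropriate length in every configuration.

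For the lower bounds I would exhibit explicit point sets realizing diameters $4$, $3$, and $3$ at $n=5$, $n\in\{6,7,8\}$, and for the $n=5$ case near $\diam=4$ one typically takes $P$ in convex position (the convex pentagon $C_5$): here $D(C_5)$ is the Petersen graph, which has diameter $2$ — so in fact the diameter-$4$ example must be a non-convex $5$-point set, and I would check the two order types of $5$ points (convex, and one point inside a quadrilateral / triangle) by hand, locating a pair of segments at distance $4$ in the triangle-inside case. Likewise for $n\in\{6,7,8\}$ I would pick a configuration — plausibly again with few points on the convex hull, so that many segments cross each other and short detours are scarce — and verify a distance-$3$ pair directly; since these are small graphs this is a finite check, possibly aided by the structural lemmas on clean segments already available in the paper.

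The main obstacle I anticipate is the exact threshold $n\ge 9$ for $\diam=2$. Proving $\diam(D(P))\le 2$ for \emph{all} $n\ge 9$ point sets requires a uniform argument: for every pair $e,f\in\pp$ (at most $4$ endpoints) there is a segment on the other $\ge 5$ points disjoint from both. One clean way is to note that $e\cup f$ is contained in a convex region $R$ spanned by $\le 4$ points, and to show that among any $5$ points of $P\setminus R$ — or more carefully, among the $\ge 5$ points of $P$ not used by $e,f$ — two of them see each other without entering $R$; this may need a rotating-line or ham-sandwich style argument to push a separating line between $R$ and a suitable pair. The delicate part is configurations where the $\le 4$ endpoints of $e,f$ are ``spread out'' so that $e\cup f$ separates the remaining points into several groups; one must then argue that at least one group still has two points, or that two points in different groups are nonetheless mutually visible past the obstacle, and check that $n=8$ genuinely allows a counterexample forcing $\diam=3$ while $n=9$ does not. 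I would isolate this as a standalone combinatorial-geometry lemma and prove Theorem~\ref{p:diam} as a corollary, since the same lemma is exactly what is needed for the ``almost all disjointness graphs have diameter $2$'' statement advertised in the abstract.
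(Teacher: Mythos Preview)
Your upper-bound framework is correct in spirit and matches the paper's approach, but you are overcomplicating the geometry and misreading what the lower bounds require.

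On the lower bounds: the theorem only asserts $\diam(D(P))\in\{2,3,4\}$ (resp.\ $\{2,3\}$), not that every value in that set is realised. Since $D(P)$ is not complete for $n\ge 5$, the bound $\diam(D(P))\ge 2$ is immediate, and that is all the ``lower bound'' content the statement needs. Your proposed search for explicit configurations attaining diameter $3$ or $4$ is unnecessary for this theorem.

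On the upper bounds: you anticipate a delicate threshold argument at $n=9$ (ham-sandwich, rotating lines, convex-region obstacles), but the paper's argument is far simpler and you should look for it. Given nonadjacent $a,b\in\pp$, take just the \emph{two} lines $\ell_a,\ell_b$ spanning $a$ and $b$; these partition the plane into exactly four open regions. General position forces every point of $P$ other than the (at most four) endpoints of $a$ and $b$ to lie in one of these four regions. For $n\ge 9$ there are at least five such points, so by pigeonhole some region contains two of them; the segment they span lies entirely in that open region and hence misses both $a$ and $b$, giving $d_{D(P)}(a,b)=2$. For $n\in\{6,7,8\}$ one cannot guarantee a region with two points, but at least two regions are nonempty, and a short case check produces two disjoint segments (each joining a leftover point to a suitable leaf of $a$ or $b$) yielding a path of length $3$. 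The $n=5$ case is one more step of the same analysis, producing a path of length $4$. No cell decomposition beyond these four quadrants, and no separating-line machinery, is needed.
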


\begin{thm}\label{t:main} Let $P$ be a set of $n\geq 5$ points in general position in the plane. Then
$\mu(D(P))\geq \binom{n}{2}-9$.
\end{thm}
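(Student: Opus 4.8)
The plan is to exhibit, for every admissible $P$, a set $S\subseteq\pp$ with $|S|\le 9$ for which $U:=\pp\setminus S$ is a mutual-visibility set of $D(P)$; this gives $\mu(D(P))\ge|U|=\binom n2-|S|\ge\binom n2-9$. By Theorem~\ref{p:diam}, if $n\ge 9$ then $\diam(D(P))=2$, so $U$ is a mutual-visibility set exactly when for every pair of \emph{intersecting} segments $e,f\in U$ (these are precisely the non-adjacent pairs of $D(P)$, hence they are at distance $2$) there is a segment $z\in S$ disjoint from both $e$ and $f$: then $e,z,f$ is a shortest $e$--$f$ path internally disjoint from $U$. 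Thus one needs $9$ segments whose common neighbourhoods collectively cover all intersecting pairs that remain in $U$. The key objects are the edges of $CH(P)$, which are \emph{clean} (uncrossed), so a hull edge is disjoint from a segment $g$ if and only if it shares no endpoint with $g$.

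I would first dispose of the case $h:=|\widehat P|\ge 9$ by letting $S$ be any $9$ edges of $CH(P)$. Given intersecting $e,f\in U$ with endpoint set $T$ ($|T|\le 4$), a hull edge of $S$ can fail to be disjoint from both $e$ and $f$ only by meeting $T$; as each point of $T$ lies on at most two hull edges, at most $2|T|\le 8$ of the nine hull edges in $S$ meet $T$, so some member of $S$ works. This is exactly where the constant $9=2\cdot 4+1$ is used, and it covers all point sets in convex position (indeed all $P$ with at least nine extreme points).

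The substantive case is $h\le 8$ (for $n\ge 9$), together with the finitely many configurations with $5\le n\le 8$. When $h\le 8$ the hull no longer has enough edges: there are intersecting pairs $e,f$ — the \emph{bad} pairs — for which every hull edge meets $T$, i.e.\ $T\cap\widehat P$ is a vertex cover of the cycle bounding $CH(P)$; this forces $|T\cap\widehat P|\ge\lceil h/2\rceil$ and a rigid, near-antipodal placement of these vertices on the hull, so that $e$ and $f$ are chords of $\widehat P$ in one of a short list of shapes. To cover the bad pairs one looks inside the hull: the $n-h\ge n-8$ interior points, the $\le 4$ cells into which $e\cup f$ partitions the interior of $CH(P)$, and the inner convex layers. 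Two useful facts are that a straight segment meets the boundary of a convex polygon in at most two edges (so $e$ and $f$ kill at most four edges of any fixed inner layer), and that among $\ge 5$ interior points some two lie in one of the $\le 4$ cells (so the segment between them is disjoint from $e$ and $f$). The plan is then, for each $h\in\{3,\dots,8\}$, to build $S$ out of a suitable collection of clean segments together with at most a few configuration-dependent ``deep'' segments, always totalling at most $9$, and to verify case by case that every bad shape is covered. For $5\le n\le 8$, where $\diam(D(P))$ may equal $3$ or $4$, the argument is finite but must also supply, for a pair at distance $3$, two consecutive removed segments serving as the interior of a shortest path.

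The main obstacle is precisely this last analysis. When the hull is small (a triangle, say) there are very few clean segments but many interior points and many bad shapes, and one must show that a choice of at most $9$ removed segments always exists — equivalently, that the transversal number of the family of common neighbourhoods of the bad pairs never exceeds the budget, which in the extremal configurations (such as the $7$-point example of Figure~\ref{fig:Ejem}) is attained. Keeping careful track of distances, and of length-$3$ shortest paths, in the cases $n\le 8$ is the other delicate point; by contrast the regime $h\ge 9$, handled in the second step, is comparatively soft and is where the bound is far from tight.
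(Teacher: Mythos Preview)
Your overall architecture is exactly the paper's: remove a set $\ss\subseteq\pp$ with $|\ss|\le 9$ and show (via what is Lemma~\ref{l:main} in the paper) that every intersecting pair $a,b\in\pp\setminus\ss$ admits a shortest $a$--$b$ path through $\ss$. Your treatment of $h=|\widehat P|\ge 9$ is correct and in fact pleasanter than the paper's for the purpose of Theorem~\ref{t:main}: taking nine hull edges and counting that the at most four endpoints of $a\cup b$ kill at most eight of them is a clean pigeonhole that the paper does not use (the paper instead uses five pairwise disjoint hull edges for $h\ge 10$ and a separate ``good-2-set'' gadget for $h\in\{8,9\}$, obtaining the stronger bounds $\binom n2-5$ and $\binom n2-8$ there).

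The genuine gap is everything else. For $h\le 8$ you outline heuristics---pigeonhole on the $\le 4$ cells cut out by $e\cup f$, edges of inner convex layers meeting $e\cup f$ in at most four places---but you do not produce, for each $h\in\{3,\dots,8\}$, an explicit $\ss$ and a verification that it covers all intersecting pairs. That construction and verification is the entire substance of the theorem; the paper spends Sections~\ref{sec:visibility3-4}--\ref{sec:visibility>7} on it, and the tools are more structured than your sketch suggests. In particular the paper isolates two reusable configurations, a \emph{good-triangle} (Definition~\ref{d:g-triangle}, Proposition~\ref{p:g-triangle}) and a \emph{good-2-set} (Definition~\ref{d:g4s}, Proposition~\ref{p:g2s}), and then, for each $h\in\{5,6,7\}$, runs a further multi-case analysis on how interior points distribute among the triangular regions cut off near the hull vertices to force one of these configurations (or a direct $\ss$). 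Your cell-pigeonhole idea does not directly yield a fixed $\ss$ of size $\le 9$ working for \emph{all} pairs simultaneously: it gives, for each pair, some segment disjoint from it, but different pairs may require different segments, and bounding the transversal by $9$ is precisely the hard combinatorial content you have not supplied. Likewise the small-$n$ regime ($5\le n\le 8$, where distances $3$ and $4$ occur) is not merely ``finite'': the paper handles it inside the same case analysis (e.g.\ Cases~1--2 of Lemma~\ref{l:ch6}, Case~1 of Lemma~\ref{l:ch7}) by exhibiting the required length-$2$ or length-$3$ internal paths in $\ss$, and this needs the specific $\ss$'s, not a counting argument. So your plan is the right plan, and your $h\ge 9$ step is a legitimate (and tidy) shortcut, but the proof as written stops short of the theorem: the case analysis for $h\le 8$ must actually be carried out.
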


\begin{proposition}\label{p:cacerola}
Consider the set $P=\{p_1=(121, 204),~p_2=(175, 196),
~p_3=(216, 82),~p_4=(189, 51),~p_5=(44, 96),~p_6=(36, 140),~p_7=(127, 135)\}$ of $7$ points in general position, as depicted in
Figure~\ref{fig:Ejem}~($a$). For this 7-point set, we have $\mu(D(P)) =  {7 \choose 2} - 9= 12$.
\end{proposition}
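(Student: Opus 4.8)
Since $n=7$, Theorem~\ref{t:main} already gives $\mu(D(P))\ge\binom{7}{2}-9=12$, and a mutual-visibility set of exactly this size is the one drawn in red in Figure~\ref{fig:Ejem}; so the whole content of the proposition is the reverse inequality $\mu(D(P))\le 12$, to which the plan below is devoted.

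The first step is to make $D(P)$ completely explicit. Using the given coordinates one lists the $21$ vertices of $\pp$ — the $6$ edges and $9$ diagonals of the convex hexagon $p_1p_2\cdots p_6$ and the $6$ spokes $p_7p_i$ — and, for every pair of segments, decides from the coordinates whether they are disjoint (so adjacent in $D(P)$) or meet. From the resulting adjacency relation one computes all pairwise distances; by Theorem~\ref{p:diam} applied with $n=7$ these distances lie in $\{1,2,3\}$, and (as recorded in the caption of Figure~\ref{fig:Ejem}) $\diam(D(P))=3$. The data one needs from this bookkeeping are: the list of non-adjacent pairs $\{x,y\}$ at distance $2$, each tagged with its common neighbourhood $N(x)\cap N(y)$; and the list of pairs $\{x,y\}$ at distance $3$, each tagged with the pairs $\{a,b\}$ of internal vertices that occur on some $x$-$y$ path of length $3$. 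This is a finite and routine verification (and the same bookkeeping lets one re-check directly that the red set of Figure~\ref{fig:Ejem} is a mutual-visibility set).

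For the upper bound, suppose for contradiction that $U\subseteq\pp$ is a mutual-visibility set with $|U|=13$, and put $W:=\pp\setminus U$, so $|W|=8$. A pair at distance $1$ is automatically $U$-mutually visible, so the hypothesis is equivalent to: for every distance-$2$ pair $\{x,y\}\subseteq U$ one has $(N(x)\cap N(y))\cap W\neq\emptyset$, and for every distance-$3$ pair $\{x,y\}\subseteq U$ some length-$3$ $x$-$y$ path has both its internal vertices in $W$. In particular a distance-$2$ pair with a unique common neighbour $z$ forces $z\in W$ as soon as its two endpoints lie in $U$, and a distance-$3$ pair with a unique connecting path forces both internal vertices of that path into $W$. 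The plan is to show that these forced memberships cannot be accommodated inside an $8$-set: one collects a suitable family of such ``critical'' pairs whose connector-sets together must occupy at least $9$ vertices of $W$ — either because they can be chosen pairwise disjoint, or via a deficiency/Hall-type count that absorbs their overlaps — contradicting $|W|=8$. Since the six spokes at $p_7$ form an independent set of size $6$ and the long diagonals of $p_1p_2\cdots p_6$ are the most heavily crossed segments, their visibility constraints are the tightest, and I expect the critical family to be built around exactly these segments.

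The main obstacle is this last reduction from the local constraints to a global impossibility. The delicate point is that a distance-$2$ pair may have several common neighbours, so deleting a single vertex of $W$ can simultaneously rescue several pairs; the family of critical pairs must therefore be chosen carefully, and the counting designed so that overlaps do not let $W$ stay within $8$ vertices. Carrying this out — ideally through a short structural case analysis keyed to the hexagon-plus-centre configuration, rather than an exhaustive search over the $\binom{21}{8}$ candidate complements — is the step that demands the real work.
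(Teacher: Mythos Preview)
Your lower bound is correct and matches the paper. For the upper bound, however, your approach and the paper's diverge sharply: the paper does \emph{not} attempt any structural argument, but instead gives a computer-assisted proof, exhaustively checking all $\binom{21}{13}=203490$ candidate $13$-subsets $\pp'$ of $V(D(P))$ and verifying that each contains some $\ss$-pair $(a,b)$ for which no shortest $a$--$b$ path has all its internal vertices in $\ss=\pp\setminus\pp'$. The Mathematica code is provided and the algorithm is spelled out explicitly.

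Your proposal, by contrast, is not yet a proof but a plan whose decisive step is explicitly left open. You correctly set up the reformulation in terms of the $8$-set $W$ and identify the right kind of obstruction (critical pairs forcing too many vertices into $W$), but you neither exhibit the family of critical pairs nor carry out the deficiency count, and you yourself flag the overlap issue as ``the main obstacle'' that ``demands the real work.'' That is precisely the gap: without the actual family and the counting, nothing has been proved. The fact that the authors themselves resorted to exhaustive search for this single configuration is a signal that a short hand argument may not be easy to find; your expectation that the spokes at $p_7$ and the long diagonals will organize the analysis is plausible, but until the case analysis or Hall-type count is actually executed and shown to close at $9>8$, the upper bound remains unestablished.
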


We remark that  Proposition~\ref{p:cacerola} implies that the general lower bound for $\mu(D(P))$ given by
Theorem~\ref{t:main} is best possible.

\begin{thm}\label{t:asintotic} Let $P$ be a set of $n$ points in general position in the plane. If
$n$ is big enough, then $\binom{n}{2}-5 \leq \mu(D(P))\leq \binom{n}{2}-4$.
\end{thm}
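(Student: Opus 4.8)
The plan is to reduce the statement to a combinatorial condition using the diameter, and then treat the two inequalities separately.

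\emph{Reduction.} Since $n$ is big enough, Theorem~\ref{p:diam} gives $\diam(D(P))=2$. Hence, for $U\subseteq\pp$ and distinct $e,f\in U$, the vertices $e$ and $f$ are $U$-mutually visible if and only if $ef\in E(D(P))$ or $e$ and $f$ have a common neighbour outside $U$. Writing $S:=\pp\setminus U$, this says that $U$ is a mutual-visibility set if and only if $S$ is \emph{admissible}, meaning that every pair $\{e,f\}$ of distinct intersecting segments with $e,f\notin S$ has some $s\in S$ disjoint from both $e$ and $f$. (By general position, two intersecting segments of $\pp$ either cross, their four endpoints then forming a convex quadrilateral with $e,f$ as diagonals, or share exactly one endpoint.) Thus $\mu(D(P))=\binom n2-s^*$, where $s^*$ is the least size of an admissible set, and it suffices to prove $4\le s^*\le 5$.

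\emph{Lower bound, $s^*\le5$.} Here I would exhibit an admissible set of size $5$. If $CH(P)$ has at least $10$ vertices, take five pairwise vertex-disjoint hull edges $s_1,\dots,s_5$; each is clean in $\pp$, hence disjoint from every segment not sharing an endpoint with it, and for any intersecting pair $\{e,f\}$ the set $V(e)\cup V(f)$ has at most $4$ points, so it meets at most $4$ of the (vertex-disjoint) $s_i$ and some $s_i$ is a common neighbour. If $CH(P)$ has fewer than $10$ vertices, then $P$ has at least $n-9$ interior points and these five clean hull edges are not available; I would replace the missing ones by gadgets that, as common neighbours, behave like blockers of a fixed $2$- or $3$-element set — for instance a diagonal $p_ip_{i+2}$ with $p_ip_{i+1}p_{i+2}$ an empty triangle is disjoint from $e$ exactly when $e$ avoids $\{p_i,p_{i+1},p_{i+2}\}$ — chosen pairwise vertex-disjoint and scattered over the outer layers of $P$, so that again no $4$-point set meets all five. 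Making this work for every hull size, in particular for triangular hulls where even these ``empty ear'' gadgets need not exist, is the step I expect to be the main obstacle.

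\emph{Upper bound, $s^*\ge4$.} I must show that no $S$ with $|S|\le3$ is admissible. Let $S=\{s_1,s_2,s_3\}$, let $V(S)$ be its at most $6$ endpoints, and pick $p\in P\setminus V(S)$, so that no segment with endpoint $p$ lies in $S$. For $i\in[3]$ set $A_i:=\{w\in P\setminus\{p\}:\ pw\ \text{meets}\ s_i\}$; then $V(s_i)\subseteq A_i$, so $|A_i|\ge2$. If $A_i\cap A_j\ne\emptyset$ for some $i\ne j$, pick $w\in A_i\cap A_j$ and $w'\in A_k\setminus\{w\}$ (where $\{i,j,k\}=[3]$); then $\{pw,pw'\}$ is an intersecting pair avoiding $S$ and each $s_\ell$ meets $pw$ or $pw'$, so $S$ is not admissible. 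This already disposes of $|S|\le2$. Otherwise $A_1,A_2,A_3$ are pairwise disjoint for \emph{every} choice of $p\in P\setminus V(S)$; this forces $s_1,s_2,s_3$ to be pairwise vertex-disjoint and, more strongly, forbids any segment with both endpoints in $P\setminus V(S)$ to meet two of the $s_i$. From this rigidity I would construct a \emph{crossing} bad pair directly from the endpoints of $s_1,s_2,s_3$ — for example a segment $u_iu_j$, which meets $s_i$ and $s_j$, together with a segment $u_kd$ crossing it — using the position of these endpoints on $CH(P)$ to guarantee that the two segments actually cross and avoid $S$ while jointly meeting all three $s_i$; certifying that the two chosen segments really cross is the delicate point. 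Combining the two bounds, $4\le s^*\le5$, that is, $\binom n2-5\le\mu(D(P))\le\binom n2-4$.
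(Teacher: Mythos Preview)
Your reduction via Theorem~\ref{p:diam} to the ``admissible set'' formulation is exactly right and matches the paper's use of Lemma~\ref{l:main}(2).

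For the \emph{lower bound} you have a genuine gap, and the fix is simpler than the gadget machinery you are contemplating. You do not need your five segments to be hull edges of $P$, nor even clean in $\pp$: you only need that every segment of $\pp$ meets at most two of them. The paper obtains this by invoking the Erd\H{o}s--Szekeres theorem to find, for $n$ large, a $10$-point subset $Q\subseteq P$ in convex position (not necessarily on $CH(P)$), and then taking $\ss=\{v_1v_2,\,v_3v_4,\,v_5v_6,\,v_7v_8,\,v_9v_{10}\}$, five alternate sides of the convex $10$-gon $\widehat{Q}$. These are pairwise disjoint, and since a straight line meets the boundary of a convex polygon in at most two points, any $h\in\pp$ intersects at most two of them; hence any intersecting pair $\{a,b\}$ misses some $f\in\ss$. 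This single observation replaces your entire case split on $|\widehat P|$ and all proposed gadgets.

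For the \emph{upper bound} your plan works in the overlapping case but leaves the ``pairwise disjoint $A_i$'' case unresolved, and the crossing you want to certify is indeed delicate. The paper's argument avoids this entirely with a two-line case distinction: if $\{s_1,s_2,s_3\}$ forms a triangle with vertex set $\{x,y,z\}$, take any $w\in P\setminus\{x,y,z\}$ and use the pair $\{xw,yw\}$; otherwise some $z_1z_2$ with $z_1\in s_1$, $z_2\in s_2$ is not in $S$, pick $z_3\in s_3\setminus\{z_1,z_2\}$, and the pair $\{z_1z_2,\,z_1z_3\}$ works. No crossing needs to be verified, because both bad pairs share an endpoint.
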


\begin{proposition}\label{p:Cn} Let $C_n$ be the set of $n$ points in general and convex position in the plane.
Then $\mu(D(C_{n})) =  {n \choose 2} - 5$ for each integer $n\geq 10$.
\end{proposition}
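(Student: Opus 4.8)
The plan is to establish the two matching bounds $\mu(D(C_n))\ge\binom{n}{2}-5$ and $\mu(D(C_n))\le\binom{n}{2}-5$ for $n\ge 10$. Both halves rest on the fact, from Theorem~\ref{p:diam}, that $\diam(D(C_n))=2$ for all $n\ge 10$, so a set $\ss$ of segments is a mutual-visibility set precisely when every pair of \emph{non-adjacent} segments of $\ss$ has a common neighbour outside $\ss$; I also use the elementary observation that deleting a vertex from a mutual-visibility set again yields a mutual-visibility set. Label the points of $C_n$ cyclically by $1,2,\ldots,n$, and recall that a boundary edge $i(i+1)$ of $C_n$ is crossed by no segment of $\pp$, hence is adjacent in $D(C_n)$ to a segment $xy$ if and only if $\{x,y\}\cap\{i,i+1\}=\emptyset$.

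For the lower bound I would take $\ss$ to consist of all segments except the five pairwise vertex-disjoint boundary edges $12,\ 34,\ 56,\ 78,\ 9\,10$; it is exactly here that $n\ge 10$ is needed, so that these five edges use ten distinct points. If $x,y\in\ss$ are non-adjacent, their endpoints comprise at most four points of $C_n$ and therefore meet at most four of the five deleted edges, so some deleted edge $e^\ast$ contains none of them; by the remark above $e^\ast$ is then adjacent to both $x$ and $y$, so $x\,e^\ast\,y$ is a shortest $x$-$y$ path internally disjoint from $\ss$. Hence $\ss$ is a mutual-visibility set and $\mu(D(C_n))\ge|\ss|=\binom{n}{2}-5$.

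For the upper bound, suppose towards a contradiction that $\mu(D(C_n))\ge\binom{n}{2}-4$; by the deletion remark there is then a mutual-visibility set $\ss$ whose complement $\qq:=V(D(C_n))\setminus\ss$ has exactly four segments. The goal is to build a convex quadrilateral $Q=P_1P_2P_3P_4$, with $P_1,\ldots,P_4$ in cyclic order on $C_n$, whose diagonals $x:=P_1P_3$ and $y:=P_2P_4$ both lie in $\ss$ and such that no segment of $\qq$ is a common neighbour of $x$ and $y$; as $x$ and $y$ cross, they are non-adjacent, and this contradicts the mutual visibility of $\ss$. The key observation is that the common neighbours of the crossing pair $x,y$ are exactly the segments lying strictly inside one of the four arcs of $C_n$ cut off by $P_1,\ldots,P_4$, so a segment $s\in\qq$ fails to be a common neighbour of $x,y$ precisely when $s$ shares a vertex with $Q$ or its two endpoints lie in two distinct arcs; since, moreover, the diagonals of any convex quadrilateral are never boundary edges of $C_n$, the condition $x,y\notin\qq$ only constrains $Q$ against the non-boundary segments of $\qq$. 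One then places $P_1,\ldots,P_4$ according to the cyclic positions of the at most eight endpoints of the segments of $\qq$: every boundary edge in $\qq$ must be \emph{met} by a vertex of $Q$, while every non-boundary segment of $\qq$ can instead simply be \emph{straddled} by the vertices of $Q$. I would organize this as a short case analysis on $h$, the number of boundary edges of $C_n$ contained in $\qq$: for $h=4$ the quadrilateral is essentially forced (one vertex inside each of the four edges, which is possible because four boundary edges of $C_n$ span at least five distinct points), and for $h\le 3$ there is increasingly more room to straddle the remaining non-boundary segments and to keep both diagonals off $\qq$; the slack provided by $n\ge 10$ guarantees that the construction can always be completed to a genuine four-point convex quadrilateral.

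The step I expect to be the main obstacle is exactly this construction of $Q$ for an \emph{arbitrary} $\qq$: when the four segments of $\qq$ are bunched together or are mostly boundary edges, the vertices of $Q$ are heavily constrained, and one must check both that a meeting/straddling convex quadrilateral still exists and that neither of its diagonals happens to coincide with one of the (at most four) segments of $\qq$. Verifying the extremal configurations $h\in\{3,4\}$, in which $Q$ is nearly determined, is where the argument is most delicate; the hypothesis $n\ge 10$ is used there to supply the extra points needed to enlarge a transversal of the boundary edges of $\qq$ into an actual convex quadrilateral whose diagonals avoid $\qq$.
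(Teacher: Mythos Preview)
Your lower bound is exactly the paper's: remove five pairwise disjoint boundary edges and observe that any intersecting pair in the complement has at most four endpoints, hence misses one of the five. Nothing to add there.

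For the upper bound you take a genuinely different route, and you leave the hard step unfinished. You aim to place a convex quadrilateral $Q=P_1P_2P_3P_4$ whose diagonals lie in $\ss$ and such that every segment of $\qq$ is ``hit'' at a vertex of $Q$ or ``straddled'' across two arcs, and you propose a case analysis on the number $h\in\{0,1,2,3,4\}$ of boundary edges in $\qq$. As you yourself note, actually producing $Q$ (and simultaneously keeping both diagonals out of $\qq$) in every configuration is the crux, and you have not done it; the sketch for $h=4$ already hides a system-of-distinct-representatives argument, and for $h\le 3$ the placement interacts with up to four non-boundary segments of $\qq$ in ways you have not controlled. So as it stands there is a real gap.

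The paper sidesteps this entirely by building the bad pair $(a,b)$ \emph{out of the endpoints of the removed segments}, so that the condition ``$a\cup b$ meets every member of $\ee$'' comes for free and no quadrilateral placement is needed. There are only two cases. If three segments of $\ee$ form a triangle with corners $\{x,y,z\}$, take an endpoint $x_4$ of the fourth segment that is not a corner, choose an edge $xy$ of the triangle whose endpoints avoid the other endpoint $y_4$, and set $a:=x_4x$, $b:=x_4y$; these share $x_4$, lie outside $\ee$, and their three endpoints $x,y,x_4$ touch all four segments of $\ee$. If $\ee$ contains no triangle, pick $z_1z_2\in\cc'$ with $z_i$ an endpoint of the $i$-th segment ($i=1,2$) and $z_3z_4\in\cc'$ likewise for $i=3,4$; then either $\{z_1z_2,z_3z_4\}$ already intersects, or (by convexity of $C_n$) one of the recombinations $\{z_1z_3,z_2z_4\}$, $\{z_1z_4,z_2z_3\}$ does, and in each case both segments lie outside $\ee$ and together touch all four removed segments. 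This two-case argument replaces your five-case analysis on $h$ and never requires positioning auxiliary points, so the obstacle you flagged simply does not arise. Note also that the paper allows $a,b$ to share an endpoint, whereas you restrict to properly crossing diagonals; that extra freedom is part of what makes the shortcut work.
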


Let $A$ be the arc of the circumference that goes from $(-5, 2)$ to $(5, 2)$ and
pass through the point $(0, 3/2)$.  Let $B$ be the reflection of $A$ along the $x$-axis. Then any point of
$A$ (resp. $B$) is above (resp. below) every straight line spanned by two points of $B$ (resp. $A$).
If $p,q\geq 1$ are integers, then the  set of ($p+q$) points $C_{p,q}$ that results by choosing $p$ points of $A$ and $q$ points of $B$
is a {\em double chain}.

\begin{proposition}\label{p:double}
Let $p\geq 3$ and $q\geq 6$ be integers. Then  $\mu(D(C_{p,q})) =  {p+q \choose 2} - 4.$
\end{proposition}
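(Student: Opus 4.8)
The plan is to recast the question as a covering problem. Since $p\geq 3$ and $q\geq 6$ force $p+q\geq 9$, Theorem~\ref{p:diam} gives $\diam(D(C_{p,q}))=2$, and in a diameter-two graph a set $U$ is a mutual-visibility set exactly when every pair $x,y\in U$ of \emph{intersecting} (i.e., non-adjacent) segments has a common neighbour outside $U$, that is, a segment disjoint from both $x$ and $y$ that is not in $U$. Writing $W:=\pp\setminus U$, this is the statement that $W$ is a \emph{blocker}: for every intersecting pair $\{x,y\}$ with $x,y\notin W$ there is a segment in $W$ disjoint from both. Hence $\mu(D(C_{p,q}))=\binom{p+q}{2}-\tau$, where $\tau$ is the minimum size of a blocker, and the proposition amounts to $\tau=4$. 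I would use throughout the standard intersection pattern of the double chain: if $a_1,\dots,a_p$ (on $A$) and $b_1,\dots,b_q$ (on $B$) are the chosen points listed by increasing abscissa, then every upper chord is disjoint from every lower chord and from every mixed segment with which it shares no endpoint (and symmetrically for lower chords); two upper, or two lower, chords intersect iff they share an endpoint or their index pairs interleave; $a_ib_k$ and $a_jb_l$ intersect iff $i=j$, $k=l$, or $(i-j)(k-l)<0$; and a consecutive chord $b_ib_{i+1}$ is clean and its index pair interleaves no other pair.

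To get the upper bound $\tau\geq 4$ (equivalently $\mu(D(C_{p,q}))\leq\binom{p+q}{2}-4$) I would show that no three segments form a blocker; this is the routine direction and re-runs the argument behind the upper bound of Theorem~\ref{t:asintotic}. Given any three segments $w_1,w_2,w_3$, it suffices to produce an intersecting pair $\{x,y\}$ with $x,y\notin\{w_1,w_2,w_3\}$ such that each $w_i$ shares an endpoint with, or crosses, $x$ or $y$; then no $w_i$ is disjoint from both, so $\pp\setminus\{w_1,w_2,w_3\}$ is not a mutual-visibility set. First I would pick a common point $v$ for $x$ and $y$ and choose their remaining endpoints among the at most six endpoints of $w_1,w_2,w_3$ so that together $x$ and $y$ meet all three $w_i$; as $p+q\geq 9$ there is enough room to realize such a pair while keeping $x,y$ outside $\{w_1,w_2,w_3\}$. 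A short split according to how the $w_i$ share endpoints (all through one point, pairwise sharing, or six distinct endpoints) completes it.

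The heart of the proof is the lower bound $\tau\leq 4$, for which I would display the explicit blocker
\[
 W^{\star}:=\{\,a_1a_2,\ b_1b_2,\ b_3b_4,\ b_5b_6\,\},
\]
available precisely because $q\geq 6$; note that $b_1b_2$, $b_3b_4$, $b_5b_6$ are pairwise disjoint. One then verifies that $\pp\setminus W^{\star}$ is a mutual-visibility set, i.e.\ that every intersecting pair $\{x,y\}$ with $x,y\notin W^{\star}$ has a member of $W^{\star}$ disjoint from both, running through the five types of intersecting pair. If $x,y$ are both upper chords, each of $b_1b_2,b_3b_4,b_5b_6$ works; if both are lower chords, $a_1a_2$ works, being disjoint from every lower chord. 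If the pair is an upper chord and a mixed segment (forced to share an endpoint), the mixed segment has one lower endpoint $b_l$, and at least two of the three pairwise disjoint chords avoid $b_l$ and hence are disjoint from both. If the pair is a lower chord $x$ and a mixed segment $y$ sharing a point $b_i$, then either the upper endpoint of $y$ is not in $\{a_1,a_2\}$ and $a_1a_2$ works, or else one uses that $x$ shares an endpoint with at most two of $b_1b_2,b_3b_4,b_5b_6$ and is interleaved by none of them, so some one of them is disjoint from $x$ and, being a lower chord, from $y$ as well. Finally, if $x,y$ are both mixed, their lower endpoints form a set of at most two points, so at least one of the three pairwise disjoint lower chords avoids it and is therefore disjoint from both mixed segments. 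In every case the chosen witness is disjoint from both $x$ and $y$, hence differs from both, as required. This gives $\tau\leq 4$, and with $\tau\geq 4$ we conclude $\mu(D(C_{p,q}))=\binom{p+q}{2}-4$.

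The hard part is the design of $W^{\star}$. The binding constraints are the ``opposite-corner'' crossing mixed pairs such as $\{a_1b_{q-1},\,a_pb_2\}$: their only common neighbours are the upper chords missing both $a_1$ and $a_p$ (which do not exist when $p=3$) together with the lower chords missing two prescribed positions, so the hypothesis $q\geq 6$ is exactly what makes room for a triple of pairwise disjoint lower chords able to dodge any two given lower positions simultaneously. One must further notice that the lower-lower intersecting pairs are what force an upper chord into $W^{\star}$, and that the lower-mixed pairs sharing a lower endpoint need the remark that a consecutive chord $b_ib_{i+1}$ is interleaved by no chord. The remaining effort is in checking that the five cases are exhaustive and that the reduction to a blocker problem is legitimate, for which $p+q\geq 9$ (hence diameter two) is essential.
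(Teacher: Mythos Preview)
Your proof is correct and follows essentially the same route as the paper's. Both arguments invoke the general upper bound $\mu(D(P))\le\binom{n}{2}-4$ from Theorem~\ref{t:asintotic} (valid since $p+q\ge 9$), and both establish the matching lower bound via the identical witness set $W^\star=\{a_1a_2,\,b_1b_2,\,b_3b_4,\,b_5b_6\}$. The paper phrases the verification through Lemma~\ref{l:main}(2) and argues by first forcing one of $a,b$ to touch $a_1a_2$; you instead recast everything as a blocker problem and run a case split by the \emph{type} of the intersecting pair (upper--upper, lower--lower, upper--mixed, lower--mixed, mixed--mixed), but the underlying geometric facts used (cleanness of consecutive chords, upper/lower chords disjoint from mixed segments with which they share no endpoint, and the $(i-j)(k-l)<0$ crossing rule for mixed pairs) are the same and are all valid for the double chain.
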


Again, we remark that  Propositions~\ref{p:Cn} and~\ref{p:double} imply that the asymptotic bounds for $\mu(D(P))$ given by
Theorem~\ref{t:asintotic} are both best possible.\\

The remainder of this paper is organized as follows. Section~\ref{sec:diameter} presents Theorem \ref{p:diam}. The  more technical work of this paper is the proof of Theorem \ref{t:main} and it is given in Section \ref{sec:visibility}, which is further subdivided based on the size of
$\widehat{P}$. Theorem~\ref{t:main} is demonstrated for $|\widehat{P}| = 3, 4$ in Section \ref{sec:visibility3-4}. Section~\ref{sec:suficient} then introduces three sufficient conditions that guarantee the validity of Theorem \ref{t:main} for $|\widehat{P}| \geq 5$. These conditions will be essential in subsequent sections. The validity of Theorem \ref{t:main} for $|\widehat{P}| = 5, 6, 7$ is established in Section~\ref{sec:visibility=567}, while the case where $|\widehat{P}| \geq 8$ follows directly from results derived in Section~\ref{sec:suficient} and is presented in Section \ref{sec:visibility>7}. Section \ref{sec:cacerola} provides a computer-assisted proof of Proposition \ref{p:cacerola}. Finally, in Section~\ref{sec:asymptotic} we show Theorem \ref{t:asintotic} and Propositions \ref{p:Cn} and \ref{p:double}.

\section{The diameter of $D(P)$: Proof of Theorem~\ref{p:diam}}\label{sec:diameter}

We recall that $n=|P|\geq 5$ and that the set $\pp$ of $\binom{n}{2}$ line segments with endpoints in $P$ is the vertex set of the graph
$D(P)$. For brevity, let $G:=D(P)$. Since $G$ is not a complete graph, then $\diam(G)\geq 2$.

Let $a$ and $b$ be distinct elements of $\pp$ such that $d_G(a,b)=\diam(G)$. Then $d_G(a,b)\geq 2$, and so
$a\cap b\neq \emptyset$. This inequality and the fact that $P$ has no three collinear points imply that $a\cap b$ consists precisely of one point of ${\mathbb R}^2$, which will be denoted by $o$. Then  $a$ and $b$ cross at $o$, or $o$ is  common endpoint of $a$ and $b$. An endpoint of $a$ or $b$ that is in exactly one of $a$ or $b$ will be called a {\em leaf} of $\{a,b\}$. Then, $\{a,b\}$ has exactly two or four leaves.

Let $\ell_a$ and $\ell_b$ be the straight lines spanned by  $a$ and $b$, respectively. Clearly, $\RR^2\setminus\{\ell_a, \ell_b\}$ consists of four open connected regions, say $R_1, R_2, R_3$, and $R_4$.  For $i\in \{1,2,3,4\}$, let $P_i:=P\cap R_i$.

$(i)$ Suppose that $n\geq 9$. Then $|P_j|\geq 2$ for some $j\in \{1,2,3,4\}$, and so $P_j$ has at least two points, say $p_{j_1}$ and $p_{j_2}$. Since
 $afb$ defines a path of length $2$ for $f:=p_{j_1}p_{j_2}$, we can conclude that $d_G(a,b)=2$, as required.

$(ii)$ Suppose that $n\in \{6,7,8\}$. Since $\diam(G)\geq 2$, it remains to show that $d_G(a,b)\leq 3$.
If $|P_i|\geq 2$ for some $i\in \{1,2,3,4\}$, then we can proceed as in
previous case and conclude that $d_G(a,b)=2\leq 3$, as claimed. Then, we can assume that $|P_i|\leq 1$ for each $i\in \{1,2,3,4\}$. This and
$n\geq 6$ imply that there exist $j,k\in \{1,2,3,4\}$ such that $j\neq k$ and $|P_{j}|=1=|P_{k}|$. Let $p_{j}$ (respectively, $p_{k}$)
 be the only element of $P_{j}$ (respectively, $P_{k}$).  It is not hard to see that, independently of the relative position of $P_{j}$ and $P_{k}$ with respect to $o$, $\pp$ always contains at least two disjoint segments $f$ and $h$ satisfying the following properties:
  $f$ (respectively, $h$) is disjoint from $b$ (respectively, $a$) and joins a point of $\{p_j, p_k\}$ with a leaf of $a$ (respectively, $b$). These properties of $f$ and $h$ imply that $ahfb$ defines a path of length $3$, and hence that $d_G(a,b)\leq 3$, as claimed.

$(iii)$ Suppose that $n=5$. Since $\diam(G)\geq 2$, it remains to show that $d_G(a,b)\leq 4$. If $|P_i|\geq 2$ for some $i\in \{1,2,3,4\}$, we can proceed as in $(i)$ and conclude that $d_G(a,b)=2\leq 4$, as claimed. Similarly, if there exist
$j,k\in \{1,2,3,4\}$ such that $j\neq k$ and $|P_{j}|=1=|P_{k}|$, we can proceed as in $(ii)$ and conclude that $d_G(a,b)=3\leq 4$, as claimed. Then, we can assume that $|P_i|=1$ for some $i\in \{1,2,3,4\}$ and that $|P_j|=0$ for each $j\in \{1,2,3,4\}\setminus \{i\}$. These facts and $n=5$ imply that $a$ and $b$ cross at $o$, and hence $\{a,b\}$ has 4 leaves. Let $p_i$ be the only element of $P_i$. It is not hard to see that $\pp$ always contains at least two segments $f$ and $h$ satisfying the following properties: $f$ (respectively, $h$) is disjoint from $b$ (respectively, $a$) and joins $p_i$ with a leaf of $a$ (respectively, $b$) that is incident with $P_i$.
Let $e$ be the segment of $\pp$ that joins the two leaves of $\{a,b\}$ that are disjoint from $f\cup h$. From the choice of  $f, h,$ and $e$ it follows that $ahefb$ defines a path of length $4$, and hence that $d_G(a,b)\leq 4$, as claimed. \hfill $\square$

\section{Mutual-visibility of $D(P)$}\label{sec:visibility}

Our aim in this section is to show Theorem~\ref{t:main}, i.e., that $\mu(D(P))\ge \binom{n}{2}-9$ whenever $n=|P|\geq 5$.
We split the proof in three subsections, depending on the value of $|\widehat{P}|$.\\

Our main tool in this paper is the following lemma.

\begin{lemma}\label{l:main}
Let $\ss$ be a subset of $\pp$ with $s:=|\ss|$.
Then $\mu(D(P))\ge \binom{n}{2}-s$ if each pair of distinct segments $a$ and $b$ of $\pp':=\pp\setminus \ss$
satisfies exactly one of the following conditions.
\noindent  \begin{itemize}
\item[(1)] $d_{D(P)}(a,b)=1$ (or equivalently, $a\cap b=\emptyset$).
\item[(2)] $d_{D(P)}(a,b)=2$ and there is $f_1\in \ss$ such that $(a\cup b)\cap f_1=\emptyset$.
\item[(3)] $d_{D(P)}(a,b)=3$ and there are $f_1,f_2\in \ss$ such that $a\cap f_1=\emptyset$, $f_1\cap f_2=\emptyset$ and
$f_2\cap b=\emptyset$.
\item[(4)] $d_{D(P)}(a,b)=4$, $n=5$, and there are $f_1,f_2,f_3\in \ss$ such that $a\cap f_1=\emptyset = f_3\cap b$, $a\cap h\neq \emptyset$ for $h\in \{f_2,f_3\}$, $b\cap h\neq \emptyset$ for  $h\in \{f_1,f_2\}$, $f_i\cap f_{i+1}=\emptyset$ for $i\in \{1,2\}$, and $f_1\cap f_3\neq \emptyset$.
\end{itemize}
\end{lemma}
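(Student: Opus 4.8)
The goal is to show that $\pp' = \pp \setminus \ss$ is itself a mutual-visibility set of $D(P)$; since $|\pp'| = \binom{n}{2} - s$, this immediately gives $\mu(D(P)) \geq \binom{n}{2} - s$. So I would fix two distinct segments $a, b \in \pp'$ and exhibit a shortest $a$-$b$ path in $D(P)$ whose internal vertices all avoid $\pp'$ — equivalently, whose internal vertices all lie in $\ss$. The four hypotheses (1)--(4) are organized precisely by the value of $d_{D(P)}(a,b)$, and Theorem~\ref{p:diam} guarantees this distance is always in $\{2,3,4\}$ (and equals $4$ only possible when $n=5$), so the case analysis is exhaustive: every pair $a,b \in \pp'$ falls under exactly one of the four conditions, and I just need to verify that in each case the promised segments $f_i \in \ss$ assemble into a shortest path internally disjoint from $\pp'$.

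Case (1) is trivial: if $a \cap b = \emptyset$ then $a$ and $b$ are adjacent in $D(P)$, the path is just the edge $ab$, there are no internal vertices, and mutual visibility holds vacuously. For case (2), $d_{D(P)}(a,b) = 2$ and we are handed $f_1 \in \ss$ disjoint from both $a$ and $b$; then $a, f_1, b$ is a path of length $2$ in $D(P)$, which is shortest since $a \cap b \neq \emptyset$, and its only internal vertex $f_1$ lies in $\ss$, hence outside $\pp'$. For case (3), $d_{D(P)}(a,b) = 3$ and we have $f_1, f_2 \in \ss$ with $a \cap f_1 = \emptyset$, $f_1 \cap f_2 = \emptyset$, $f_2 \cap b = \emptyset$; then $a, f_1, f_2, b$ is a walk in $D(P)$, and I should note that the four segments are pairwise distinct (any coincidence, e.g.\ $f_1 = b$ or $a = f_2$, would contradict $a,b \notin \ss$ while $f_1, f_2 \in \ss$; and $f_1 = f_2$ is impossible since $f_1 \cap f_2 = \emptyset$ but a segment meets itself), so it is a genuine path of length $3$, which is shortest, with both internal vertices $f_1, f_2 \in \ss$. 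Case (4) is the same bookkeeping one notch longer: with $f_1, f_2, f_3 \in \ss$ satisfying the stated adjacency/non-adjacency pattern, $a, f_1, f_2, f_3, b$ is a walk; the non-adjacency conditions ($a \cap f_2 \neq \emptyset$, $a \cap f_3 \neq \emptyset$, $b \cap f_1 \neq \emptyset$, $b \cap f_2 \neq \emptyset$, $f_1 \cap f_3 \neq \emptyset$) together with $a, b \notin \ss$ are exactly what is needed to check that all five segments are distinct, so it is a path of length $4$, which is shortest because $n = 5$ forces $\diam(D(P)) \leq 4$ and the hypothesis tells us $d_{D(P)}(a,b) = 4$; its three internal vertices lie in $\ss$.

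The lemma is therefore essentially a formal repackaging, and there is no serious analytic obstacle — no point-set geometry is invoked beyond what is already encoded in the adjacency conditions $f_i \cap f_j = \emptyset$. The only place that demands a little care, and which I would write out explicitly rather than wave at, is the verification in cases (3) and (4) that the listed segments $a, f_1, \dots, b$ are \emph{pairwise distinct} so that the walk is genuinely a \emph{path}, and that its length matches $d_{D(P)}(a,b)$ so that it is genuinely a \emph{shortest} path; the disjointness of consecutive $f_i$'s rules out consecutive repeats, the membership split $a,b \notin \ss \ni f_i$ rules out an $f_i$ coinciding with $a$ or $b$, and the explicit \emph{non}-disjointness conditions in (4) (such as $f_1 \cap f_3 \neq \emptyset$) rule out the remaining non-consecutive coincidences. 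Once distinctness is in hand, "shortest" is free from the distance hypothesis stated in each clause. I expect the whole proof to be under a page.
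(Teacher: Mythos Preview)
Your proposal is correct and follows essentially the same approach as the paper: show that $\pp'$ is a mutual-visibility set by, for each pair $a,b\in\pp'$, using the segments $f_i\in\ss$ supplied by the hypothesis to build a shortest $a$-$b$ path $a,f_1,\ldots,f_{t-1},b$ whose internal vertices lie in $\ss$. Your plan is in fact more careful than the paper's own argument, which simply asserts that $af_1\cdots f_{t-1}b$ is a shortest path without pausing to verify pairwise distinctness; your explicit check of distinctness in cases~(3) and~(4) is a nice touch but not a different idea.
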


\begin{proof} We recall that $n=|P|\geq 5$. Let $\ss, \pp'$ and $s$ be as in the statement of the lemma.
Let $a$ and $b$ be distinct segments of $\pp'$ and let $t:=d_{D(P)}(a,b)$. By Theorem~\ref{p:diam} we know that
$t$ is an element of $\{1,2,3,4\}$.

We now show that $a$ and $b$ are $\pp'$-visible.
 If item (1) holds, then $a$ and $b$ are adjacent in $D(P)$ and so they are $\pp'$-visible. Suppose now that $t\in \{2,3,4\}$
and that item $(t)$ holds. Then $af_1 \cdots f_{t-1}b$ forms a shortest $a-b$ path in $D(P)$ that is internally disjoint from $\pp'$, and so
$a$ and $b$ are $\pp'$-visible again. Since $|\pp'|=\binom{n}{2}-s$, we are done
\end{proof}

\begin{definition}\label{d:pair} If $\ss$ is a subset of $\pp$ and $a, b$ are distinct segments
of $\pp\setminus \ss$ with $a\cap b\neq \emptyset$, then $(a,b)$ will be called an $\ss$-{\em pair}.
\end{definition}


\subsection{Mutual-visibility of $D(P)$ when $|\widehat{P}|\in\{3,4\}$}\label{sec:visibility3-4}

 The following notation will be used in the proof of Lemma~\ref{l:ch3}.

\begin{definition}\label{d:u1u2}  Let $v_1, v_2, \ldots , v_m$ be the points in $\widehat{P}$, and suppose that they appear in  clockwise cyclic order on $\widehat{P}$. For $i\in [m]$, let $v_i^-$ (respectively, $v_i^+$) be the first (respectively, last) point of $P\setminus \{v_i,v_{i+1},v_{i+m-1}\}$ that we find when we rotate $\ell(v_iv_{i+1})$ clockwise around $v_i$, addition taken$\mod m$ (i.e.,  $m+i=i$).
\end{definition}

We remark that the points $v_i^-$ and $v_{i}^+$ in Definition~\ref{d:u1u2}  are well defined and distinct, since $n\geq 5$ and $P$ is in general position.

\begin{proposition}\label{p:convex-hull-3}
Suppose that $v_1, v_2, v_3$ are the points in $\widehat{P}$ and that they appear in clockwise cyclic order on $\widehat{P}$. Then  $v_i^- v_i^+ v_{i+1}v_{i+2}$ form a convex quadrilateral  for some $i\in \{1,2,3\}$, addition taken $\mod 3$.
\end{proposition}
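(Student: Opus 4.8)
Write $T$ for the triangle $v_1v_2v_3$, put $I:=P\setminus\widehat{P}$ (the $n-3\ge 2$ points of $P$ interior to $T$), and $H:=CH(I)$. The first step is to recognise from Definition~\ref{d:u1u2} that $v_i^-$ and $v_i^+$ are the two points of $I$ at which the tangent lines from $v_i$ to $H$ touch $H$ (hence both are vertices of $H$), and that, reading the points of $I\cup\{v_{i+1},v_{i+2}\}$ in the angular order in which $v_i$ sees them, one obtains $v_{i+2},v_i^-,\dots,v_i^+,v_{i+1}$. Next I would reduce the statement to the purely combinatorial assertion that, for some $i\in\{1,2,3\}$, the points $v_i^-,v_i^+,v_{i+1},v_{i+2}$ are \emph{in convex position}. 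This reduction rests on two remarks: (a) inspecting the barycentric coordinate of a vertex $v$ of $T$ shows $v\notin CH(S)$ whenever $S$ consists of points of $\mathrm{int}(T)$ together with vertices of $T$ other than $v$; applying this with $S\subseteq\{v_i^-,v_i^+,v_{i+1},v_{i+2}\}$ shows $v_{i+1},v_{i+2}$ are vertices of the hull $K$ of these four points, and since $v_{i+1},v_{i+2}\in\ell(v_{i+1}v_{i+2})$ while $v_i^\pm\in\mathrm{int}(T)$, the segment $v_{i+1}v_{i+2}$ is even an edge of $K$; (b) consequently, if the four points are in convex position, then $v_{i+1},v_{i+2}$ are the two vertices of $K$ extreme as seen from $v_i$, so along the complementary (near) arc of $\partial K$ the vertices $v_i^-,v_i^+$ occur in the angular order in which $v_i$ sees them, which by the first step makes the cyclic order of $K$ exactly $v_i^-v_i^+v_{i+1}v_{i+2}$; whereas if the four points are not in convex position, no ordering of them bounds a convex quadrilateral.

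To prove the combinatorial assertion I would argue by contradiction: assume $v_i^-,v_i^+,v_{i+1},v_{i+2}$ fail to be in convex position for every $i$, so that for each $i$ one of $v_i^-,v_i^+$ lies strictly inside the triangle of the other three. Fix an edge $e=ab$ of $H$ (if $|I|=2$, $e$ is just the segment joining the two interior points). As $a,b\in\mathrm{int}(T)$, the line $\ell(ab)$ crosses exactly two sides of $T$, which meet at a vertex $v_k$, so $\ell(ab)$ separates $v_k$ from $\{v_{k+1},v_{k+2}\}$. I claim that, for a well-chosen $e$, the four points $v_k^-,v_k^+,v_{k+1},v_{k+2}$ are then in convex position, a contradiction. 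For $|I|=2$ this needs no choice and is immediate: here $\{v_k^-,v_k^+\}=\{a,b\}$, the points $a,b$ lie on $\ell(ab)$ with $v_{k+1},v_{k+2}$ strictly on one side of it, and $v_{k+1},v_{k+2}$ are vertices of $T$ — these facts already forbid any of the four points from lying in the triangle of the other three. For $|I|\ge 3$ one chooses $e$ carefully (for instance an edge of $H$ whose supporting line leaves exactly one vertex of $T$ on its outer side, so that $e$ is seen only from its apex $v_k$) and derives convex position of $\{v_k^-,v_k^+,v_{k+1},v_{k+2}\}$ from the separation of $v_k$ from $H$ by $\ell(ab)$ and the extremality of $v_{k+1},v_{k+2}$ in $T$; the few configurations for which no such edge exists (e.g.\ $H$ a triangle oriented like $T$) are handled by direct inspection, in which case in fact every $i$ works.

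The step I expect to be the main obstacle is precisely the treatment of $|I|\ge 3$: the tangent points $v_k^\pm$ depend on the whole configuration $I$, not merely on the chosen edge $e$, so the transparent $|I|=2$ argument does not carry over verbatim. I expect the right remedy is a uniform lemma: if $e=ab$ is an edge of $H$ and $\ell(ab)$ separates the apex $v_k$ from $\{v_{k+1},v_{k+2}\}$, then neither $v_k^-$ nor $v_k^+$ can lie in the triangle of the other three of $\{v_k^-,v_k^+,v_{k+1},v_{k+2}\}$ — because such containment would force the line through $v_k$ and that inner tangent point to enter the relevant triangle on the side of $\ell(ab)$ away from $v_k$, contradicting that $v_k^\pm$ lie on the closure of the other side. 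Turning this into a clean proof, and dispatching the finitely many exceptional configurations, is where the real work of the argument lies.
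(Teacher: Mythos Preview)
Your reduction to ``the four points $v_i^-,v_i^+,v_{i+1},v_{i+2}$ are in convex position for some $i$'' is sound, and the $|I|=2$ case is fine. The approach is genuinely different from the paper's, which proceeds by a direct case split on whether the three pairs $\{v_i^-,v_i^+\}$ overlap and then chases the configuration with a few auxiliary lines; your idea of organising everything around $H=CH(I)$ and tangent points is cleaner in spirit.

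The real problem is the $|I|\ge 3$ step. Your proposed lemma --- that if an edge $ab$ of $H$ has only $v_k$ on its outer side then $v_k^-,v_k^+,v_{k+1},v_{k+2}$ are in convex position --- is \emph{false}. Take $v_1=(0,10)$, $v_2=(10,0)$, $v_3=(-10,0)$ and $I=\{a,b,c,d\}$ with $a=(-1,4)$, $b=(1,4)$, $c=(8,0.5)$, $d=(-0.5,3.5)$. Then $H$ is the convex quadrilateral $abcd$, the edge $ab$ lies on $y=4$ with only $v_1$ on its outer side, yet the tangent points from $v_1$ are $v_1^-=c$ and $v_1^+=a$, and one checks that $c$ lies strictly inside the triangle $a\,v_2\,v_3$ (equivalently, $\ell(ac)$ meets the segment $v_2v_3$ near $x\approx 9.29$). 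So $i=1$ fails even though your edge criterion selects it; in this configuration it is $i=3$ that works.

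Your sketched justification also contains an internal slip: you write that the containment would ``contradict that $v_k^\pm$ lie on the closure of the other side'' of $\ell(ab)$, but by your own choice the outer side of $ab$ contains $v_k$ while $H$ --- and hence $v_k^\pm$ --- lies on the \emph{inner} side, the same side as $v_{k+1},v_{k+2}$. So the separation you appeal to points the wrong way. The paper's case analysis avoids this trap by never trying to pin down $i$ in advance: it assumes the line $\ell(v_1^-v_1^+)$ meets $v_2v_3$, deduces constraints on where $v_2^-,v_3^+,v_2^+,v_3^-$ must sit, and from those constraints extracts a working index. If you want to salvage the $CH(I)$ approach you will need a selection rule for $k$ that really depends on the tangent structure (the vertices $v_k^\pm$ themselves), not merely on which edges of $H$ face which vertices of $T$.
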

\begin{proof} Suppose first that $\{v^-_1, v^+_{1}\}, \{v^-_2, v^+_{2}\}, \{v^-_3, v^+_{3}\}$ are pairwise disjoint. Assume w.l.o.g. that
$v_1,v_2,v_3$ are placed as in Figure~\ref{f:ch3b}~(left). Since if $\ell(v_1^+v_1^-)$ does not cross $v_2v_3$, then
$v_1^-v_1^+v_2v_3$ is the required quadrilateral, we assume that  $\ell(v_1^+v_1^-)$ crosses $v_2v_3$ at $o$.
We only analyze the case in which $v_1^+$ is closer to $o$ than $v_1^-$, as depicted in Figure~\ref{f:ch3b}~(left).
 The proof for the case in which $v_1^-$ is closer to $o$ than $v_1^+$ is totally analogous.

 Since $v_2^-$ and $v_3^+$ are distinct, the segments
$v_2v_2^-$ and $v_3v_3^+$ must cross each other. Then each point in $P\setminus \{v_1,v_2,v_3,v_1^-, v_1^+,v_2^-, v_3^+\}$ is
in the interior of the convex quadrilateral  $Q$ defined by
$\ell(v_1v_1^-),~\ell(v_1v_1^+),~\ell(v_2v_2^-)$ and $\ell(v_3v_3^+)$. See Figure~\ref{f:ch3b}~(left).

Since if $v_2^+$ is on the left of $\ell(v_1v_2^-)$, then $v_2^-v_2^+v_{1}v_{3}$ form the required quadrilateral, we assume that
$v_2^+$ is on the right of $\ell(v_1v_2^-)$. From the definition of $v_3^-$, it follows that $v_3^-$ must be in the interior
of triangle $T$ bounded by $\ell(v_3v_2^+),\ell(v_1v_1^+)$ and $\ell(v_2v_2^+)$. From $v_3^-\in T$, it is not hard to see that
$v_3^-v_3^+v_{1}v_{2}$ form the required quadrilateral.

 Suppose now that $\{v^-_i, v^+_{i}\}$ intersects to either $\{v^-_{i+1}, v^+_{i+1}\}$ or $\{v^-_{i+2}, v^+_{i+2}\}$ for some
 $i\in \{1,2,3\}$. It is easy to see that, up to symmetry, such intersection can occur essentially in two ways: either $v^-_i=v=v^+_{i+2}$ or
 $v^-_i=v=v^+_{i+1}$.

Suppose that $v^-_i=v=v^+_{i+2}$. Then each point of $P':=P\setminus \{v_1,v_2,v_3,v\}$ is in the interior of the triangle $v_ivv_{i+2}$.
If $v_i^+\in \ell^-(v_{i+1}v_i^-)$ (resp. $v_i^+\in \ell^+(v_{i+1}v_i^-)$), then $v_i^- v_i^+ v_{i+1} v_{i+2}$
 (resp. $v_{i+2}^- v_{i+2}^+ v_{i} v_{i+1}$) is the required~quadrilateral.

Suppose finally that $v^-_i=v=v^+_{i+1}$.
Let $o_i:=\ell(v_iv)\cap v_{i+1}v_{i+2}$ and $o_{i+1}:=\ell(v_{i+1}v)\cap v_{i}v_{i+2}$.
Then each point of $P\setminus \{v_1,v_2,v_3,v\}$ is in the interior of the convex quadrilateral
formed by $o_ivo_{i+1}v_{i+2}$. See Figure~\ref{f:ch3b}~(right).
If $v_i^+\in\ell^-(v_{i+2}v)$ (respectively, $v^-_{i+1}\in \ell^+(v_{i+2}v)$),
then $v_i^-v_i^+v_{i+1}v_{i+2}$ (respectively, $v_{i+1}^-v_{i+1}^+v_{i}v_{i+2}$) is the required quadrilateral. Then $v_i^+$ must lie on the right of $\ell(v_{i+2}v)$ and $v^-_{i+1}$ must lie on the left of $\ell(v_{i+2}v)$, as otherwise we are done. But these imply that $v_{i+1}^-$ must be in the interior of the triangle $vv_iv_i^+$, contradicting the definition of $v_{i+1}^-$.
\end{proof}
\begin{figure}[h]
	 \includegraphics[width=1\textwidth]{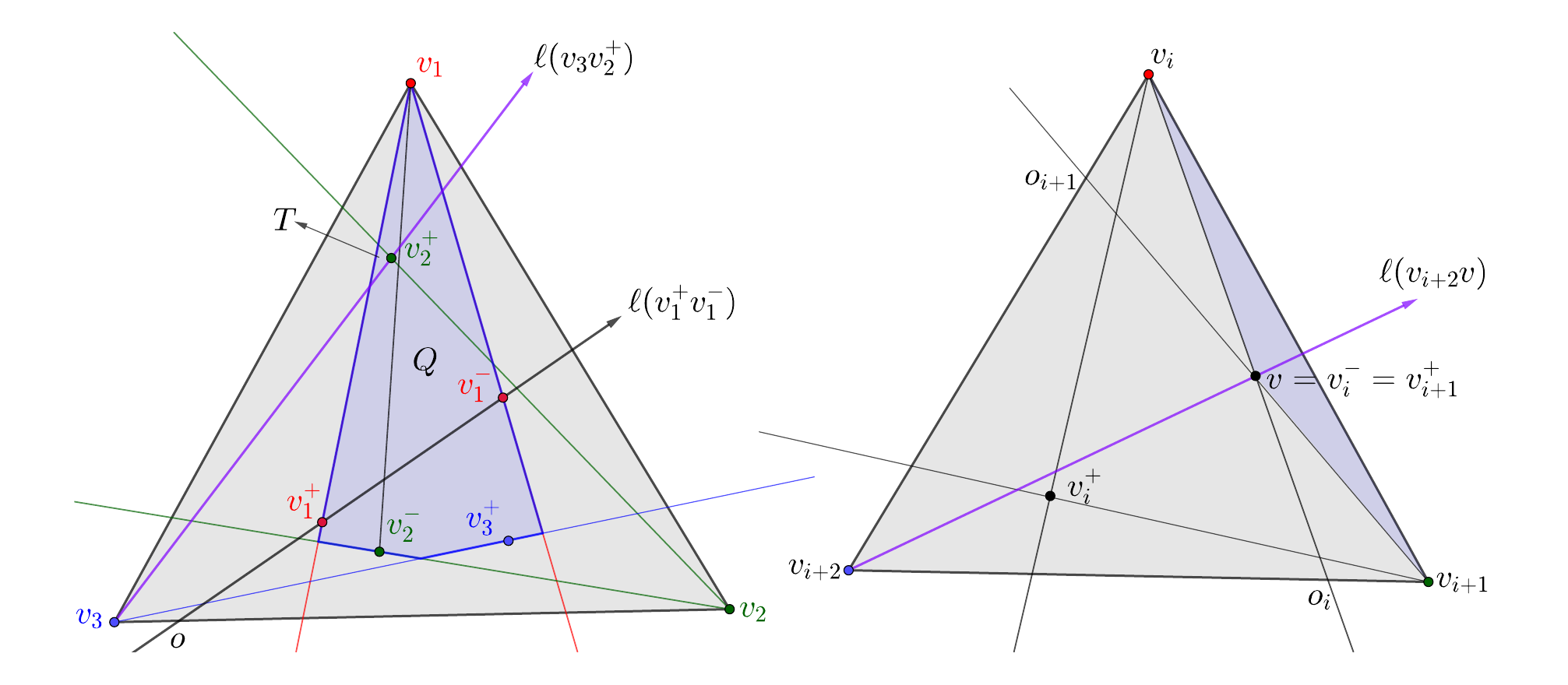}
	\caption{\small If $\widehat{P}=\{v_1, v_2, v_3\}$, then $v_i^- v_i^+ v_{i+1}v_{i+2}$ form a
	convex quadrilateral  for some $i\in \{1,2,3\}$, addition taken$\mod 3$. }
	\label{f:ch3b}
\end{figure}

\begin{lemma}\label{l:ch3}
If $n\geq 5$ and $|\widehat{P}|=3$, then $\mu(D(P))\geq \binom{n}{2}-8$.
\end{lemma}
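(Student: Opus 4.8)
The plan is to apply Lemma~\ref{l:main} with a carefully chosen set $\ss$ of at most $8$ segments, so that every $\ss$-pair $(a,b)$ satisfies one of conditions (1)--(3) (condition (4) is irrelevant here since $n\geq 5$ forces no obstruction once $|\widehat P|=3$ and we will see $\diam$ is small enough on $\pp'$). Since $|\widehat P|=3$, write $\widehat P=\{v_1,v_2,v_3\}$ in clockwise order. By Proposition~\ref{p:convex-hull-3}, after relabeling we may assume $v_1^-v_1^+v_2v_3$ is a convex quadrilateral; this quadrilateral gives us a ``clean'' configuration of four points whose six connecting segments behave predictably. The first step is to define $\ss$: the natural candidate is the set of segments incident to $v_1$ together with a few extra segments near $v_1$, or more precisely the segments that are ``forced'' to cross many others — concretely I expect $\ss$ to consist of the segments $v_1v_2$, $v_1v_3$, and the segments $v_1 v_1^-$, $v_1 v_1^+$, $v_1^- v_1^+$, plus possibly $v_2 v_3$ and one or two more, totaling at most $8$. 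The point of excluding these is that they are precisely the segments for which finding a short connecting path through $\ss$ is awkward, while every segment left in $\pp'$ has both endpoints ``in general position enough'' to route around.

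The heart of the argument is then a case analysis over $\ss$-pairs $(a,b)$, i.e. pairs of segments of $\pp'$ that intersect. Given such a pair, $a\cup b$ spans at most four points and the two lines $\ell_a,\ell_b$ cut the plane into four regions $R_1,R_2,R_3,R_4$ with $P_i=P\cap R_i$; this is exactly the setup in the proof of Theorem~\ref{p:diam}. If some $|P_i|\geq 2$, then (as in that proof) there is a segment $f$ with $f\cap(a\cup b)=\emptyset$, and we need $f\in\ss$ — so the real work is to check that when $a,b\in\pp'$ such an $f$ can always be chosen \emph{outside} $\ss$, which should follow because $\ss$ is concentrated around $v_1$ while the region with $\geq 2$ points, if it avoids $\ss$-segments, must contain points away from $v_1$. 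If all $|P_i|\leq 1$, then $n=|P|\leq 4+ (\text{endpoints of }a,b)$, forcing $n$ small, and one argues directly: the quadrilateral $v_1^-v_1^+v_2v_3$ from Proposition~\ref{p:convex-hull-3} supplies the connecting segments $f_1,f_2$ for a length-$3$ route, and we must verify these lie in $\pp'$, not in $\ss$ — this is where the precise choice of $\ss$ matters and must be tuned so the diagonals and sides of that quadrilateral are available.

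The main obstacle I anticipate is the bookkeeping in the sub-case where few of the $R_i$ contain two or more points: here the connecting segments must be exhibited explicitly using $v_1^{\pm}$, $v_2$, $v_3$, and one has to confirm both that they are pairwise disjoint in the right pattern (matching condition (2) or (3) of Lemma~\ref{l:main}) and that none of them accidentally belongs to $\ss$. A secondary obstacle is ensuring $|\ss|\leq 8$: if the case analysis naturally wants $9$ or $10$ excluded segments one must argue that some of them can be handled without exclusion (e.g. a segment crossed by only one other segment — ``clean up to one crossing'' — can often be routed around). I would organize the proof by first fixing $\ss$ explicitly, then disposing of all $\ss$-pairs with some $|P_i|\geq 2$ in one paragraph via the Theorem~\ref{p:diam} argument, and finally handling the bounded-$P_i$ cases by a short enumeration keyed to the position of $v_1^-$ and $v_1^+$ relative to the lines $\ell(v_2v_3)$, $\ell(v_1v_2)$, $\ell(v_1v_3)$, exactly mirroring the structure already used in the proof of Proposition~\ref{p:convex-hull-3}.
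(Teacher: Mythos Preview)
Your high-level plan---invoke Proposition~\ref{p:convex-hull-3}, pick $\ss$ with $|\ss|\le 8$, then verify Lemma~\ref{l:main}(2) for every $\ss$-pair---matches the paper. But the execution has two real gaps. First, your candidate $\ss$ does not work: with the six segments you list, the diagonals $a=v_1^-v_2$ and $b=v_1^+v_3$ of the convex quadrilateral $v_1^-v_1^+v_2v_3$ lie in $\pp'$ and cross, yet every element of your $\ss$ shares an endpoint with $a\cup b$, so no $f\in\ss$ is disjoint from $a\cup b$ and Lemma~\ref{l:main}(2) fails for this pair. Saying ``one or two more'' is not enough unless you name them correctly; the paper's $\ss$ is all $\binom{4}{2}=6$ segments on $\{v_1^-,v_1^+,v_2,v_3\}$ together with $v_1v_1^-$ and $v_1v_1^+$, and---contrary to your instinct---it leaves $v_1v_2$ and $v_1v_3$ in $\pp'$.

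Second, routing the argument through the four regions $R_1,\dots,R_4$ of the diameter proof is a detour that does not close. Finding a segment inside a region $R_i$ with $|P_i|\ge 2$ shows only that $d_{D(P)}(a,b)=2$; it does not place that segment in $\ss$, and your phrase ``such an $f$ can always be chosen \emph{outside} $\ss$'' has the requirement backwards: Lemma~\ref{l:main}(2) needs $f\in\ss$, so that the internal vertex of the shortest path $a\,f\,b$ lies outside the visibility set $\pp'$. The paper does not count points in quadrants at all; with its $\ss$ fixed it runs a direct case split on whether $v_1$ (and then $v_2,v_3$) lies in $a\cup b$, and in every branch one of the eight segments of $\ss$ is seen directly to miss $a\cup b$. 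Condition~(3) of Lemma~\ref{l:main} is never invoked.
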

\begin{proof}
Let $u,v,w$ be the points in $\widehat{P}$, and suppose that they appear in this cyclic order on $\widehat{P}$.
Let $u^-$ (respectively, $u^+$) be the first (respectively, last) point in $P\setminus \widehat{P}$ that we find as we rotate
$\ell(uv)$ clockwise around $u$. By Proposition~\ref{p:convex-hull-3}, we can assume that $u^-u^+vw$
form a convex quadrilateral.

Let $o^-:=\ell(uu^-)\cap vw$ and  $o^+:=\ell(uu^+)\cap vw$. Then all the points in $P\setminus \{u,v,w,u^-,u^+\}$ are in the interior of the triangle $uo^-o^+$. See Figure~\ref{f:ch34}~(left).

Let $\ss:=\{uu^-, uu^+, u^-u^+, vu^-, wu^-, vu^+,wu^+,vw\}$ and let $(a,b)$ be an $\ss$-pair.
By Lemma~\ref{l:main}~(2), it is enough to show that $\ss$ has a segment $f$ such that
$(a\cup b)\cap f=\emptyset$.

$\bullet$ Suppose that $(a\cup b)\cap \{u\}\neq \emptyset$. If  $a\cap b=\{u\}$, then $(a\cup b)\cap f=\emptyset$ for some
$f\in \{wu^+,~u^+u^-,~u^-v\}$. Suppose w.l.o.g. that $u\in a$ and $u\notin b$.
If $a=uw$, (respectively, $a=uv$) then  $a\cap b\neq\emptyset$ implies that $w\in b$ (respectively, $v\in b$),
 and so $u^-v$ (respectively, $u^+w$) is the required $f$.

Then, we assume that $w, v\notin a$, and so $a\cap vw=\emptyset$.
Since  $b\cap vw=\emptyset$ implies that
 $vw$ is the required $f$, we can assume $b\cap vw\neq\emptyset$. If $v\in b$ (respectively, $w\in b$), then $u^+w$
 (respectively, $u^-v$) is the required $f$.
  	
$\bullet$ Suppose that $(a\cup b)\cap \{u\}=\emptyset$. If $v,w\in a\cup b$, then $a\cap b\neq \emptyset$ implies that $(a\cup b)\cap f=\emptyset$ for some $f\in \{uu^-,~uu^+,~u^-u^+\}$.

Since $v,w\notin a\cup b$ implies that
 $vw$ is the required $f$, we can assume w.l.o.g. that $vw\cap a\neq \emptyset$ and $vw\cap b=\emptyset$.
  If $v\in a$, then $a\cap u^+w=\emptyset$. Since $b\cap u^+w=\emptyset$ implies that $u^+w$ is the required $f$, we must have that $u^+\in b$. Then $a\cap b\neq \emptyset$ guarantees that  $(a\cup b)\cap f=\emptyset$ for some  $f\in \{wu^-,~u^-u\}$.
Similarly, if  $w\in a$, then $a\cap u^-v=\emptyset$. Since $b\cap u^-v=\emptyset$ implies that $u^-v$ is the required $f$, we must have that $u^-\in b$. Then $a\cap b\neq \emptyset$ guarantees that  $(a\cup b)\cap f=\emptyset$ for some  $f\in \{vu^+,~u^+u\}$.
\end{proof}

\begin{lemma}\label{l:ch4}
If $n\geq 5$ and $|\widehat{P}|=4$, then $\mu(D(P))\geq \binom{n}{2}-9$.	
\end{lemma}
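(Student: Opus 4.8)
The plan is to imitate the proof of Lemma~\ref{l:ch3}, but now with $|\widehat{P}|=4$. Write $\widehat{P}=\{v_1,v_2,v_3,v_4\}$ in clockwise cyclic order. The goal is to exhibit a set $\ss\subseteq\pp$ of size at most $9$ such that every $\ss$-pair $(a,b)$ satisfies one of the conditions of Lemma~\ref{l:main}; since $|\widehat{P}|\geq 4$ we expect $n$ could be as small as $4$, but we only care about $n\geq 5$, and we want all $\ss$-pairs to satisfy condition~(1), (2), or possibly~(3). The natural first step is to produce an analogue of Proposition~\ref{p:convex-hull-3}: rotate the supporting lines at two adjacent hull vertices (say $v_1$ and $v_2$) to locate points $v_1^-,v_1^+$ and $v_2^-,v_2^+$ of $P\setminus\widehat P$ (these exist since $n\geq 5$; if $n=5$ there is exactly one interior point and $v_1^\pm=v_2^\pm$ is that single point, which the argument must accommodate). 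One then wants to find a small ``core'' configuration — a quadrilateral or triangle spanned by hull vertices and these rotated points — so that \emph{all} the remaining points of $P$ lie inside a small region, exactly as the triangle $uo^-o^+$ worked in Lemma~\ref{l:ch3}.

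Concretely, I would take $\ss$ to be the segments among the six points $v_1,v_2,v_3,v_4,x,y$ (where $x,y$ are suitably chosen rotated points near a chosen edge, possibly $x=y$ when $n=5$) that are ``dangerous'', i.e. the segments incident to the region containing all other points together with the four hull edges $v_1v_2,v_2v_3,v_3v_4,v_4v_1$ and the two diagonals $v_1v_3,v_2v_4$ as needed — and then trim this list down to $9$. The heart of the proof is then the case analysis over $\ss$-pairs $(a,b)$: for each pair, one must name an $f\in\ss$ (or a path $f_1,f_2$) disjoint from $a\cup b$. This splits along the same lines as in Lemma~\ref{l:ch3}: whether $a\cup b$ meets the ``apex'' vertices, and which hull edges or diagonals $a$ and $b$ avoid. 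Because $|\widehat P|=4$ there is one more hull vertex than in the $|\widehat P|=3$ case, so there are more segments to potentially block and the case analysis is genuinely larger; the payoff is that there are also more disjoint segments available to serve as connectors $f$, and in particular one can often use one of the two long diagonals $v_1v_3$, $v_2v_4$.

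The key geometric lemma to establish first — the main obstacle — is the structural statement that for some choice of a hull edge and the associated rotated points, all of $P$ outside a bounded set of $\leq 6$ named points lies in the interior of a triangle (or convex quadrilateral) whose sides lie on lines through the named points; this is what guarantees that segments $a,b$ with endpoints among the ``generic'' points cannot reach out to block the connectors. I expect this to require a Proposition~\ref{p:convex-hull-3}-style case split on whether the rotated supporting lines cross certain hull edges, and on coincidences among the rotated points (the $n=5$ degenerate case). Once that structural fact is in hand, the verification that $9$ segments suffice is routine but lengthy bookkeeping: for each of the finitely many combinatorial types of $\ss$-pair, point to the blocking segment $f$, occasionally invoking Lemma~\ref{l:main}~(3) with a two-segment path $f_1,f_2$ when no single $f$ works (this is where the budget of $9$ rather than $8$ is spent). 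I would organize the write-up exactly as in Lemma~\ref{l:ch3}: first fix $\ss$ explicitly, then dispatch the pairs $(a,b)$ according to $(a\cup b)\cap\{\text{apexes}\}$, and within each branch according to which of the distinguished segments $a$ and $b$ avoid, appealing to Lemma~\ref{l:main} at the end.
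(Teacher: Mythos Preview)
Your plan is plausible in spirit but substantially more complicated than what the paper does, and it leaves the central construction unspecified.  The paper's proof avoids rotated supporting lines and any analogue of Proposition~\ref{p:convex-hull-3} entirely.  Instead it observes that the two diagonals $v_1v_3$ and $v_2v_4$ cut $CH(P)$ into four triangles; since $n\geq 5$, one of these (say the triangle on the side of $v_3v_4$) contains an interior point, and one simply takes $v'$ to be the interior point closest to $v_3v_4$, so that the triangle $v_3v'v_4$ is empty of points of $P$.  A single interior point suffices --- there is no need for two rotated points $x,y$, no degenerate $n=5$ subcase, and no preliminary structural proposition.  The set $\ss$ is then just the nine segments among the five points $v_1,v_2,v_3,v_4,v'$ other than $v_3v_4$ itself.

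With this choice, only condition~(2) of Lemma~\ref{l:main} is ever invoked; the case analysis never needs a two-segment connector $f_1,f_2$.  The key device is the ``pentagon'' $\qq=\{v_1v_2,\,v_2v_3,\,v_3v',\,v'v_4,\,v_4v_1\}\subset\ss$: any segment of $\pp'$ disjoint from $\{v_3,v_4\}$ meets at most two members of $\qq$ (here is where the emptiness of the triangle $v_3v'v_4$ is used), so if both $a$ and $b$ avoid $\{v_3,v_4\}$ some element of $\qq$ serves as $f$.  The remaining case --- one of $a,b$ has an endpoint in $\{v_3,v_4\}$ and meets three members of $\qq$ --- is handled in a few lines using the segments $v_1v_4$, $v_2v'$ (or symmetrically $v_2v_3$, $v_1v'$).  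Your anticipated ``lengthy bookkeeping'' and appeal to Lemma~\ref{l:main}~(3) are symptoms of choosing a harder $\ss$; the paper's choice of a single nearest interior point collapses the analysis dramatically.
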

\begin{proof}
Let $v_1,v_2,v_3,v_4$ be the points in $\widehat{P}$, and suppose that they appear in this cyclic order on $\widehat{P}$.
Trivially, $v_1v_2,~v_2v_3,~v_3v_4$ and $v_4v_1$ are clean in $\pp$. We assume w.l.o.g. that
$v_1,v_2,v_3,v_4$ are placed as in Figure~\ref{f:ch34}~(right).

 Let $o:=v_1v_3\cap v_2v_4$. Since the points in $P$ are in general position, then $o\not\in P$. As $n\geq 5$, at least one of the four triangles formed by $CH(P)$ and its diagonals, must contain points of $P$ in its interior. Assume w.l.o.g. that
 $v_3ov_4$ is such a triangle. Let $v'$ be a point of $P$ in the interior of the triangle $v_3ov_4$ that is closest to $v_3v_4$. Then
 $\triangle:=v_3v'v_4$ has no points of $P$ in its interior.

Let $\ss:=\{v_1v_2,v_1v_3,v_1v',v_1v_4,v_2v_3,v_2v',v_2v_4,v_3v',v_4v'\}$ and let $(a,b)$ an $\ss$-pair.
By Lemma~\ref{l:main}~(2), it is enough to show that $\ss$ has a segment $f$ such that
$(a\cup b)\cap f=\emptyset$.

We note that if $e\in \pp'$ is incident with either $v_3$ or $v_4$, then $e$ is disjoint from $v_1v_2$. Thus, we can assume
that at least one of $a$ or $b$ is disjoint from $\{v_3,v_4\}$, as otherwise $v_1v_2$ is the required $f$. Suppose w.l.o.g. that $b$ is disjoint from $\{v_3,v_4\}$. This last and the choice of $v'$ imply that $b$ intersects at most 2 segments in
$\qq:=\{v_1v_2,~v_2v_3,~v_3v',~v_4v',v_4v_1\}$.

Since $v_3v_4$ is clean in $\pp$ and $a\cap b\neq \emptyset$, $a$ cannot be $v_3v_4$. Similarly, we note that if $a$ intersects at most 2 segments of $\qq$, then $(a\cup b)\cap f=\emptyset$ for some $f\in \qq$, as required. Then we can assume that $a$ intersects at least 3 segments of $\qq$, and so we must have that $a$ crosses $v_jv'$ and $v_i\in a$ for $\{i,j\}=\{3,4\}$.

We will only analyze the case $i=3$ and $j=4$, as the proof for the case $i=4$ and $j=3$ is totally analogous.
Since if $v_1\notin b$, then $f=v_1v_4$ is the required segment, we can assume that $v_1\in b$. From this last assumption, the properties of $b$, and the fact that $a\cap b\neq \emptyset$, it is not hard to see that $f=v_2v'$ is the required segment.
\end{proof}

\begin{figure}[h]
	 \includegraphics[width=0.7\textwidth]{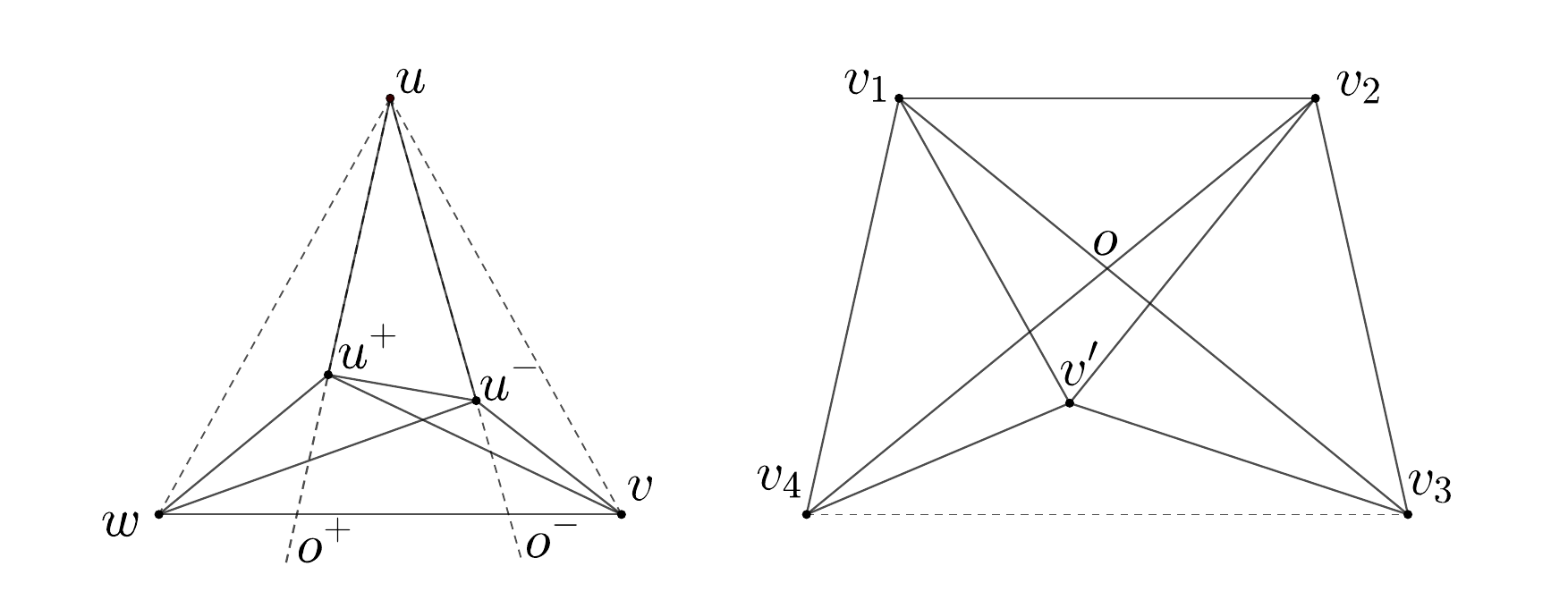}
	\caption{\small The set formed by the 8 (respectively, 9) continuous seg\-ments of the drawing on the left (respectively, right) is
	$\ss$.}
	\label{f:ch34}
\end{figure}


\subsection{Sufficient conditions}\label{sec:suficient}

In this section we present three sufficient conditions under which $\mu(D(P))\geq \binom{n}{2}-9$ when
$m:=|\widehat{P}|\geq 5$. These will be often used in the rest of the paper.

Let $v_1, v_2, \ldots , v_m$ be the points in $\widehat{P}$, and suppose that they appear in  clockwise cyclic order on
$\widehat{P}$. For $i\in [m]$, let $e_i:=v_iv_{i+1}$ where addition is taken$\mod m$.

 \begin{proposition}\label{p:5-disj}
If $\pp$ has 5 clean segments which are pairwise disjoint, then $\mu(D(P))\ge \binom{n}{2}-5$.
 \end{proposition}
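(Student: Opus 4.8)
The plan is to apply Lemma~\ref{l:main} with $\ss$ equal to the set of $5$ clean, pairwise disjoint segments provided by the hypothesis; then $s = |\ss| = 5$, and it suffices to verify that every $\ss$-pair $(a,b)$ satisfies one of the conditions (1)--(4) of Lemma~\ref{l:main}. Since an $\ss$-pair has $a \cap b \neq \emptyset$ by Definition~\ref{d:pair}, condition (1) never applies, and condition (4) is only needed when $n=5$; when $n=5$ we have $\binom{5}{2}=10 < 2\cdot 5$, so $\pp'$ has only $5$ segments and that degenerate case can be handled separately (or absorbed by noting $\diam(D(P))$ forces (4) to be checkable directly). The main work is thus to show that for $n \geq 6$, every $\ss$-pair satisfies (2) or (3).

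The key observation driving the argument is that each segment $e \in \pp$ is \emph{disjoint from at least $3$ of the $5$ clean segments} in $\ss$. Indeed, let $\ss = \{f_1,\dots,f_5\}$ be pairwise disjoint. Because the $f_j$ are pairwise disjoint, their $10$ endpoints are distinct (so they use $10$ of the $n \geq 6$ points --- wait, that would force $n \geq 10$; instead I would not assume the endpoints are all distinct, only that the closed segments are pairwise disjoint, which already forces the endpoints to be distinct points: two closed segments sharing an endpoint are not disjoint). So in fact $n \geq 10$ automatically, which is convenient. Now an arbitrary segment $e = xy$ with $x,y \in P$ can intersect $f_j$ only if $e$ meets $f_j$; since each $f_j$ is clean (not crossed by any segment of $\pp$), $e$ cannot cross $f_j$, so $e \cap f_j \neq \emptyset$ forces $e$ and $f_j$ to share an endpoint, i.e. an endpoint of $e$ lies on $f_j$. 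As the endpoint sets of $f_1,\dots,f_5$ are pairwise disjoint, each of the two endpoints $x,y$ of $e$ lies on at most one $f_j$; hence $e$ intersects at most $2$ of the five segments $f_j$, and is disjoint from at least $3$ of them.

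With this in hand, the verification is short. Let $(a,b)$ be an $\ss$-pair, so $a \cap b \neq \emptyset$. The set $A := \{j : a \cap f_j = \emptyset\}$ and $B := \{j : b \cap f_j = \emptyset\}$ each have size $\geq 3$ by the previous paragraph, so $|A \cap B| \geq 1$: there is $f_1 \in \ss$ with $(a \cup b) \cap f_1 = \emptyset$, and $d_{D(P)}(a,b) = 2$ since $a f_1 b$ is a path of length $2$ and $d \geq 2$ because $a \cap b \neq\emptyset$. That is exactly condition (2). Therefore every $\ss$-pair satisfies (2), and Lemma~\ref{l:main} yields $\mu(D(P)) \geq \binom{n}{2} - 5$.

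The only subtlety I anticipate is the edge case $n = 5$ in Lemma~\ref{l:main}'s condition (4): but as noted, five pairwise disjoint clean segments have $10$ distinct endpoints, so the hypothesis of Proposition~\ref{p:5-disj} already forces $n \geq 10$, and in particular $n \neq 5$; thus conditions (1) and (4) are simply vacuous here and the whole proof reduces to the counting argument above. I would state this $n \geq 10$ consequence explicitly at the start of the proof to make the reduction clean. No genuine obstacle remains; the cleanliness hypothesis is precisely what prevents an arbitrary segment from intersecting more than two members of $\ss$, which is the crux.
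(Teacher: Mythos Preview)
Your argument is correct and follows essentially the same route as the paper: take $\ss$ to be the five clean pairwise disjoint segments and verify condition~(2) of Lemma~\ref{l:main} for every $\ss$-pair by observing that any segment of $\pp\setminus\ss$ can meet at most two members of $\ss$ (cleanliness forces shared endpoints, and the two endpoints hit at most two of the disjoint $f_j$), so pigeonhole yields a common $f_1$ disjoint from $a\cup b$. The paper compresses this into a single sentence; your version spells out the counting and also makes explicit the useful side observation that the hypothesis forces $n\geq 10$, rendering condition~(4) vacuous.
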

 \begin{proof} Let $\ss:=\{f_1,f_2,f_3,f_4,f_5 \}$ be the set of pairwise disjoint segments that are clean in $\mathcal{P}$.
  Let $(a,b)$ an $\ss$-pair. Since the segments of
 $\ss$ are clean and pairwise disjoint, then $a\cup b$ is disjoint from at least one $f_i\in \ss$ and we are done by Lemma~\ref{l:main}~(2).
  \end{proof}


\begin{definition}\label{d:g-triangle} Let $x\in P\setminus \widehat{P}$, and let $e_i=v_iv_{i+1}$ and $m$ be as above. We say that a triangle
$\triangle:=xv_iv_{i+1}$ is a {\em good-triangle} of $P$ if the following conditions are satisfied:
\begin{itemize}
\item[$(i)$] $x$ lies in the interior of the quadrilateral $v_iv_{i+1}v_{i+2}v_{i+m-1}$.
\item[$(ii)$] If $uv\in \pp\setminus \triangle$ intersects the three segments of $\triangle$, then\\
$uv\in \{v_iv_{i+2}, v_iv_{i+3}, v_{i+1}v_{i+m-2}, v_{i+1}v_{i+m-1}\}$.
\end{itemize}
\end{definition}

Evidently, if $x$ satisfies condition $(i)$ and $xv_i,~xv_{i+1}$ are clean in $\pp$, then $\triangle:=xv_iv_{i+1}$ is a  good-triangle of $P$.


\begin{proposition}\label{p:g-triangle}
 If $P$ has a good-triangle and $m=|\widehat{P}|\geq 6$, then $\mu(D(P))\geq \binom{n}{2}-9$.
\end{proposition}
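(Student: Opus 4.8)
The plan is to build an explicit set $\ss$ of at most $9$ segments and verify that every $\ss$-pair satisfies one of the conditions of Lemma~\ref{l:main}; since $m\ge 6$ and hence (by Theorem~\ref{p:diam}) $\diam(D(P))\le 3$, only conditions (1)--(3) are relevant, and in fact I expect condition (2) to do almost all the work. Let $\triangle:=xv_iv_{i+1}$ be the given good-triangle; after relabelling I may assume $i=1$, so $\triangle=xv_1v_2$, and by Definition~\ref{d:g-triangle}$(i)$ every point of $P$ other than $x,v_1,v_2$ lies outside $\triangle$, while condition $(ii)$ restricts which segments can meet all three sides of $\triangle$. The natural candidate is
\[
\ss:=\{\,xv_1,\;xv_2,\;v_1v_2,\;xv_{m},\;xv_3,\;v_1v_3,\;v_1v_4,\;v_2v_{m-1},\;v_2v_{m}\,\},
\]
a set of nine segments: the three sides of $\triangle$, plus the four ``exceptional'' segments from Definition~\ref{d:g-triangle}$(ii)$, plus $xv_m$ and $xv_3$ (the two segments from $x$ to the neighbours of the base on $CH(P)$). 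The key geometric point is that the three sides of $\triangle$ form a closed triangle whose interior contains no point of $P$, so any segment $ab$ with $a\cap b\ne\emptyset$ that is \emph{not} disjoint from all of $xv_1,xv_2,v_1v_2$ must actually cross all three sides of $\triangle$ or be incident to a vertex of $\triangle$.

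The argument then splits along the usual lines. First, if $(a,b)$ is an $\ss$-pair with $a\cup b$ disjoint from at least one side of $\triangle$, that side is the required $f_1$ and Lemma~\ref{l:main}$(2)$ applies. So I may assume each of $a$ and $b$ meets $\{xv_1,xv_2,v_1v_2\}$; since neither $a$ nor $b$ lies in $\ss$, and the three sides are clean-or-nearly-clean inside $\triangle$, I show that each of $a,b$ either is incident to one of $x,v_1,v_2$, or crosses all three sides of $\triangle$ and is therefore one of the four exceptional segments of $(ii)$ --- but those are in $\ss$, contradiction. Hence each of $a,b$ is incident to a vertex of $\triangle$. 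Now a short case check on which vertex: if $a\cup b$ avoids $v_1$ and avoids $v_2$ then both are incident to $x$, and $a\cap b=\{x\}$ forces $a,b$ to be two of the diagonals/edges at $x$; I produce $f_1$ from among $v_1v_3,v_1v_4,v_2v_{m-1},v_2v_m$ by choosing one on the ``far side'' of $x$ from both $a$ and $b$ (this is where I need $xv_3,xv_m\in\ss$ to handle the boundary cases, and where $m\ge 6$ guarantees enough room among $v_3,v_4,v_{m-1},v_m$ that these four indices are distinct). If instead, say, $v_1\in a$, then I use the clean edge $e_1=v_1v_2$ or one of the $x$-segments as $f_1$ unless $b$ also clusters near $v_1$ or $v_2$, in which case a segment of the form $xv_j$ separates them; in every subcase one of the nine listed segments is disjoint from $a\cup b$, or failing that one finds $f_1,f_2\in\ss$ realizing a length-$3$ path for Lemma~\ref{l:main}$(3)$ (e.g. $f_1$ incident to $x$ on $a$'s side, $f_2$ incident to $v_2$ on $b$'s side, with $f_1\cap f_2=\emptyset$ because they sit in opposite corners of $\triangle$).

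The step I expect to be the genuine obstacle is the case where \emph{both} endpoints involved lie very close to $x$ --- i.e. $a$ and $b$ are both segments emanating from $x$ (so $d_{D(P)}(a,b)\ge 2$ only because they share $x$), or $a,b$ are short diagonals $v_1v_2$-adjacent on opposite sides. Here I must exhibit a single $f_1\in\ss$ with $(a\cup b)\cap f_1=\emptyset$, and the danger is that $a\cup b$ sweeps across all of $v_1v_3,v_1v_4,v_2v_{m-1},v_2v_m$ simultaneously. This is precisely why $\ss$ includes $xv_3$ and $xv_m$: when $a,b$ are both at $x$, their other endpoints lie in the open double wedge cut out by $\ell(xv_1)$ and $\ell(xv_2)$ on the side of $CH(P)$, and one checks that $a\cup b$ cannot meet the short $x$-segment pointing into the \emph{opposite} wedge, so $xv_3$ or $xv_m$ works; condition $(i)$ of the good-triangle definition (that $x$ is interior to $v_mv_1v_2v_3$) is what makes $xv_3$ and $xv_m$ clean enough for this. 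Assembling these subcases and confirming $|\ss|\le 9$ throughout completes the proof via Lemma~\ref{l:main}. $\hfill\square$
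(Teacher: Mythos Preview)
Your choice of $\ss$ does not work. Take $m=6$ (or any $m\ge 6$) and set $a=xv_5$, $b=v_1v_5$; these share $v_5$, neither lies in your $\ss$, and $d_{D(P)}(a,b)=2$ (the clean hull edge $v_3v_4$ is a common neighbour). But every element of your $\ss$ meets $a\cup b$: the four segments $xv_1,xv_2,xv_3,xv_m$ share $x\in a$; the three segments $v_1v_2,v_1v_3,v_1v_4$ share $v_1\in b$; and $v_2v_{m-1},v_2v_m$ either share $v_5$ (when $m=6$) or cross $v_1v_5$ (when $m\ge 7$, since in a convex $m$-gon each of $v_2v_{m-1},v_2v_m$ separates $v_1$ from $v_5$). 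So Lemma~\ref{l:main}(2) cannot be invoked, and since $d=2$ condition~(3) is unavailable. The symmetric pair $(xv_4,v_2v_4)$ fails the same way.

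The underlying problem is that your nine segments are all incident with $\{x,v_1,v_2\}$, so any $\ss$-pair using up all three of these points as endpoints leaves you with no disjoint witness. The paper's $\ss$ differs from yours in exactly two segments: in your labelling it keeps the seven common segments $xv_1,xv_2,v_1v_2,v_1v_3,v_1v_4,v_2v_{m-1},v_2v_m$ but replaces your $xv_3,xv_m$ by the clean convex-hull edges $v_3v_4$ and $v_{m-1}v_m$. These two edges, together with $v_1v_2$, are three pairwise disjoint clean segments, and that is what drives the argument. After reducing to the case where $a\cup b$ meets all three sides of $\triangle$ (which, via condition~(ii) and the cleanness of $v_1v_2$, forces two of $x,v_1,v_2$ to be endpoints of $a\cup b$), the paper further assumes $a\cup b$ meets $v_3v_4$ and $v_{m-1}v_m$; this pushes the remaining two endpoints of $a\cup b$ into $\{v_3,v_4\}$ and $\{v_{m-1},v_m\}$ respectively, and a one-line check then gives $a\cap b=\emptyset$. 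Your sketch also contains a smaller slip --- from ``$a\cup b$ meets every side of $\triangle$'' you cannot deduce that \emph{each} of $a,b$ is incident with a vertex of $\triangle$ --- but the decisive issue is the missing pair of far hull edges.
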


\begin{proof} By relabelling  the points of $\widehat{P}$ if necessary, we may assume that
$\triangle:=xv_3v_4$ is a good-triangle of $P$. Then $x$ lies in the interior of the quadrilateral
$v_2v_3v_{4}v_{5}$.

Let $\ss:=\{e_1, e_3, e_5, xv_3, xv_4, v_1v_4, v_2v_4, v_3v_5, v_3v_6\}$.
We note that  $e_1=v_1v_2, e_3=v_3v_4$ and $e_5=v_5v_6$ are clean in $\pp$ and pairwise disjoint.

Let $a$ and $b$ be distinct segments of $\pp\setminus \ss$.
According to Lemma~\ref{l:main}~(1)-(2), it is enough to show that $\mathcal{S}$ has a segment $f$ such that
$(a \cup b) \cap f = \emptyset$, unless $a\cap b=\emptyset$.

Since if $(a\cup b)\cap g=\emptyset$ for some $g\in \triangle=\{xv_3, xv_4, v_3v_4\}$, then $f=g$ is the
required segment, we can assume that $a\cup b$ intersects each of these three segments.
This last and condition $(ii)$ in Definition~\ref{d:g-triangle} imply that exactly two elements
of $\{x, v_3, v_4\}$ are endpoints of $a\cup b$. Assume w.l.o.g. that $a$ has an endpoint in $\{v_3, v_4\}$.
Similarly, since if $(a\cup b)\cap g=\emptyset$ for some $g\in \{e_1, e_5\}$, then $f=g$ is the
required segment, we can assume that $a\cup b$ intersects each of these two segments.

If $v_3\in a$, then $v_3v_5, v_3v_6\in \ss$ and $(a\cup b)\cap e_5\neq \emptyset$ imply that
$b$ has an endpoint in $\{v_5, v_6\}$, and so the other one must be in $\{x, v_4\}$. This last and
$(a\cup b)\cap e_1\neq \emptyset$ imply that $a$ has its other endpoint in $\{v_1, v_2\}$. But these
imply $a\cap b=\emptyset$, as required. The case in which $v_4\in a$ is totally analogous.
\end{proof}

\begin{definition}\label{d:g4s} If $uv$ and $xy$ are disjoint segments of $\pp$, we define $\dd(uv,xy):=\{uv,ux,uy,\-vx,vy,xy\}$. Then,
$\dd(uv,xy)$ defines a drawing of $K_4$. The two segments in $\{ux,uy,vx,vy\}$ that lie in the interior of
the convex hull of $\dd(uv,xy)$ will be the {\em diagonals} of $\dd(uv,xy)$. We will say that $\dd(uv,xy)$ is a
{\em good-2-set} of
$\pp$ if it satisfies the following conditions:
\begin{itemize}
\item $uv$ and $xy$ are clean in $\pp$.
\item If $e\in \{uv, xy\}$, then $e$ has at least one endpoint in $\widehat{P}$.
\item If $L$ and $R$ are the opposite quadrants of the plane defined by the lines spanned by the diagonals of $\dd(uv,xy)$ that are internally disjoint from $uv$ and $xy$, then $L$ (resp. $R$) has a segment $e_l$ (resp. $e_r$) of $\pp$ in its interior which is not crossed by a
segment of $\pp \setminus \dd(uv,xy)$.
\end{itemize}
\end{definition}

We note that the existence of a good-2-set in $\pp$ implies that $n=|P|\geq 8$.


\begin{proposition}\label{p:g2s}
 If $\pp$ has a good-2-set, then $\mu(D(P))\geq \binom{n}{2}-8$.
\end{proposition}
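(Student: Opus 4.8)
The plan is to invoke Lemma~\ref{l:main} with
\[
\ss:=\dd(uv,xy)\cup\{e_l,e_r\}=\{uv,ux,uy,vx,vy,xy,e_l,e_r\}.
\]
The four points $u,v,x,y$ are in convex position (otherwise no two of $ux,uy,vx,vy$ would cross, so the diagonals of $\dd(uv,xy)$ would be undefined), and $e_l,e_r$ have all their endpoints in the open regions $L$ and $R$, so the eight listed segments are pairwise distinct and $s:=|\ss|=8$. By Lemma~\ref{l:main} it then suffices to verify that every pair of distinct segments of $\pp':=\pp\setminus\ss$ satisfies one of conditions~(1)--(4); for disjoint pairs this is condition~(1), so only $\ss$-pairs need attention. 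Since a good-2-set forces $n\ge 8$, Theorem~\ref{p:diam} gives $\diam(D(P))\le 3$, so condition~(4) is vacuous, and I will show that every $\ss$-pair satisfies condition~(2).

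First I would fix the geometry. The two lines spanned by the diagonals $ux$ and $vy$ meet at $o:=ux\cap vy$ and cut the plane into four quadrants which carry, one each, the four sides $uv,vx,xy,yu$ of the convex quadrilateral $uvxy$. By definition $\{L,R\}$ is the pair of opposite quadrants internally disjoint from $uv$ and $xy$ — that is, the two carrying $vx$ and $yu$ — and after exchanging $e_l\leftrightarrow e_r$ if necessary we may assume $L$ is the quadrant carrying $yu$ and $R$ the one carrying $vx$. This yields: \emph{(i)} $uv,xy,e_l,e_r$ are pairwise disjoint and have eight pairwise distinct endpoints (the first two lie in opposite closed quadrants, $e_l,e_r$ in the other two open quadrants, so in particular the endpoints of $e_l$ and $e_r$ avoid $\{u,v,x,y\}$); \emph{(ii)} $uv,xy$ are clean and $e_l,e_r$ are not crossed by any segment of $\pp\setminus\dd(uv,xy)\supseteq\pp'$, hence for every $c\in\pp'$ and every $g\in\{uv,xy,e_l,e_r\}$ we have $c\cap g\ne\emptyset$ if and only if $c$ and $g$ share an endpoint (a contact in the relative interior of $g$ would create three collinear points of $P$ or a crossing that is ruled out). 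Note that $(ii)$ does not need $e_l,e_r$ to be clean.

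Now let $(a,b)$ be an $\ss$-pair. By $(i)$--$(ii)$, each of $a,b$ meets at most two of $uv,xy,e_l,e_r$ (at most one per endpoint), and neither meets both $uv$ and $xy$: such a segment would have one endpoint in $\{u,v\}$ and one in $\{x,y\}$, hence would lie in $\dd(uv,xy)\subseteq\ss$, contradicting $a,b\in\pp'$. If some $g\in\{uv,xy,e_l,e_r\}$ is disjoint from $a\cup b$, then $agb$ is a path of length $2$ in $D(P)$, so $d_{D(P)}(a,b)=2$ and condition~(2) holds with $f_1=g$. Otherwise $a$ and $b$ jointly meet all four of $uv,xy,e_l,e_r$; the constraints above then force, up to exchanging $a$ and $b$, that $a$ meets $uv$ and exactly one of $e_l,e_r$ while $b$ meets $xy$ and the other. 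Thus $a$ joins a vertex of $uv$ to an endpoint of some $e\in\{e_l,e_r\}$, and $b$ joins a vertex of $xy$ to an endpoint of the other. In either of these two sub-cases, a direct inspection of the quadrant picture (using that $L,R$ carry the sides $yu,vx$) shows that one of the diagonal lines $\ell(ux),\ell(vy)$ has $a$ contained in one of its closed half-planes and $b$ in the other, each of $a,b$ meeting that line in at most a single vertex of $\{u,v,x,y\}$, and these two vertices being distinct. Hence $a\cap b=\emptyset$, contradicting that $(a,b)$ is an $\ss$-pair. So some $g\in\ss$ is always disjoint from $a\cup b$, every $\ss$-pair satisfies condition~(2), and Lemma~\ref{l:main} gives $\mu(D(P))\ge\binom{n}{2}-8$.

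I expect the only genuine work to be the final sub-argument: correctly identifying $L$ and $R$ as the quadrants carrying $vx$ and $yu$ (not $uv$ or $xy$), and checking the half-plane separation of $a$ and $b$ in the ``antipodal'' configuration. Everything else is a short incidence count which, via Lemma~\ref{l:main}(2), reduces the statement to that one planar fact; in particular there is no case explosion, because ``$a$ meets both $uv$ and $xy$'' is excluded outright by $a\notin\ss$.
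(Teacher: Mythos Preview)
Your approach mirrors the paper's almost exactly: take $\ss=\dd(uv,xy)\cup\{e_l,e_r\}$ and verify Lemma~\ref{l:main}(2) for every $\ss$-pair by showing that some element of $\{uv,xy,e_l,e_r\}$ is disjoint from $a\cup b$, the obstructing configuration being when $a$ links $\{u,v\}$ to one of $L,R$ and $b$ links $\{x,y\}$ to the other, which forces $a\cap b=\emptyset$. That outline is right, and your incidence count (each of $a,b$ meets at most two of $uv,xy,e_l,e_r$, and never both $uv$ and $xy$) is clean.

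The gap is your opening geometric claim. You assert that $u,v,x,y$ must be in convex position ``otherwise no two of $ux,uy,vx,vy$ would cross, so the diagonals of $\dd(uv,xy)$ would be undefined.'' This misreads Definition~\ref{d:g4s}: the diagonals are \emph{defined} as the two segments of $\{ux,uy,vx,vy\}$ lying in the interior of the convex hull of $\{u,v,x,y\}$, with no requirement that they cross one another. When one of the four points---say $y$---lies inside the triangle $uvx$, the diagonals are $uy$ and $vy$ (they share the endpoint $y$ and do not cross), and the paper explicitly allows this: Figure~\ref{f:k4} displays both the quadrilateral and the triangle configuration, and the paper's proof treats them in one stroke.

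In the triangle case the two diagonal lines meet at the interior vertex, the four quadrants still make sense, $uv$ and $xy$ still occupy opposite quadrants (so $L,R$ are well defined), and a separation argument still goes through---but with the diagonal lines $\ell(uy),\ell(vy)$ rather than the $\ell(ux),\ell(vy)$ you name, and with the caveat that one of $u,v,x,y$ now sits strictly inside the $uv$- or $xy$-quadrant rather than on a diagonal line, so your bookkeeping ``each of $a,b$ meeting that line in at most a single vertex of $\{u,v,x,y\}$'' also needs adjustment. The paper sidesteps all of this by arguing directly that ``$a$ is disjoint from the interior of $R$'' (and symmetrically for $b$), which holds in both configurations. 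The repair is short, but as written your proof excludes a case that the definition permits and that the paper does handle.
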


\begin{proof} Let $\dd:=\dd(uv,xy)$ be a good-2-set of $\pp$ and let $L,R, e_l$, and $e_r$ be
as in Definition~\ref{d:g4s}.
Let $\ss:=\dd \cup \{e_l, e_r\}$ and let $(a,b)$ be an $\ss$-pair. By Lemma~\ref{l:main}~(2), it is enough to show that
$\ss$ has a segment $f$ such that $(a\cup b)\cap f=\emptyset$.

Let $\ell_1$ and $\ell_2$ be the straight lines spanned by the diagonals of $\dd(uv,xy)$. We assume w.l.o.g. that $\ell_1$
and $\ell_2$ are directed in such a way that $L$ (resp. $R$) lies on the left (resp. right) side of $\ell_1$ and $\ell_2$. Clearly,
the convex hull of $\dd$ is either a quadrilateral or a triangle. Then, up to order type isomorphism, there are only two possibilities for $\dd$, namely those depicted in Figure~\ref{f:k4}.

Since if $(a\cup b)\cap g=\emptyset$ for some $g\in \{e_l, e_r\}$, then $f=g$ is the required segment, we can assume that
$a\cup b$ intersects both $e_l$ and $e_r$. Assume w.l.o.g. that $a$ intersects $e_l$.

Similarly, we note that if $(a\cup b)\cap g=\emptyset$ for some $g\in \{uv, xy\}$, then $f=g$ is the required segment, and so
we can assume that $a\cup b$ intersects both $uv$ and $xy$. This last, the properties of $\dd$, and the fact that $uv$ and $xy$ are clean in $\pp$ imply that $a$ (resp. $b$) has exactly one common endpoint with exactly one of $uv$ or $xy$. From these facts
 and the assumption that $a$ intersects $e_l\in L$, it follows that $a$ is disjoint from the interior of $R$. See Figure~\ref{f:k4}.
  Then $b$ must intersect $e_r$, as otherwise $e_r$ is the required $f$. From $b\cap e_r\neq \emptyset$, and the fact that $b$ has exactly one endpoint in $\{u,v,x,y\}$ it is easy to see that $b$ does not intersect the interior of $L$, and so we have $a\cap b=\emptyset$, a contradiction.
\begin{figure}[h]
	 \includegraphics[width=0.6\textwidth]{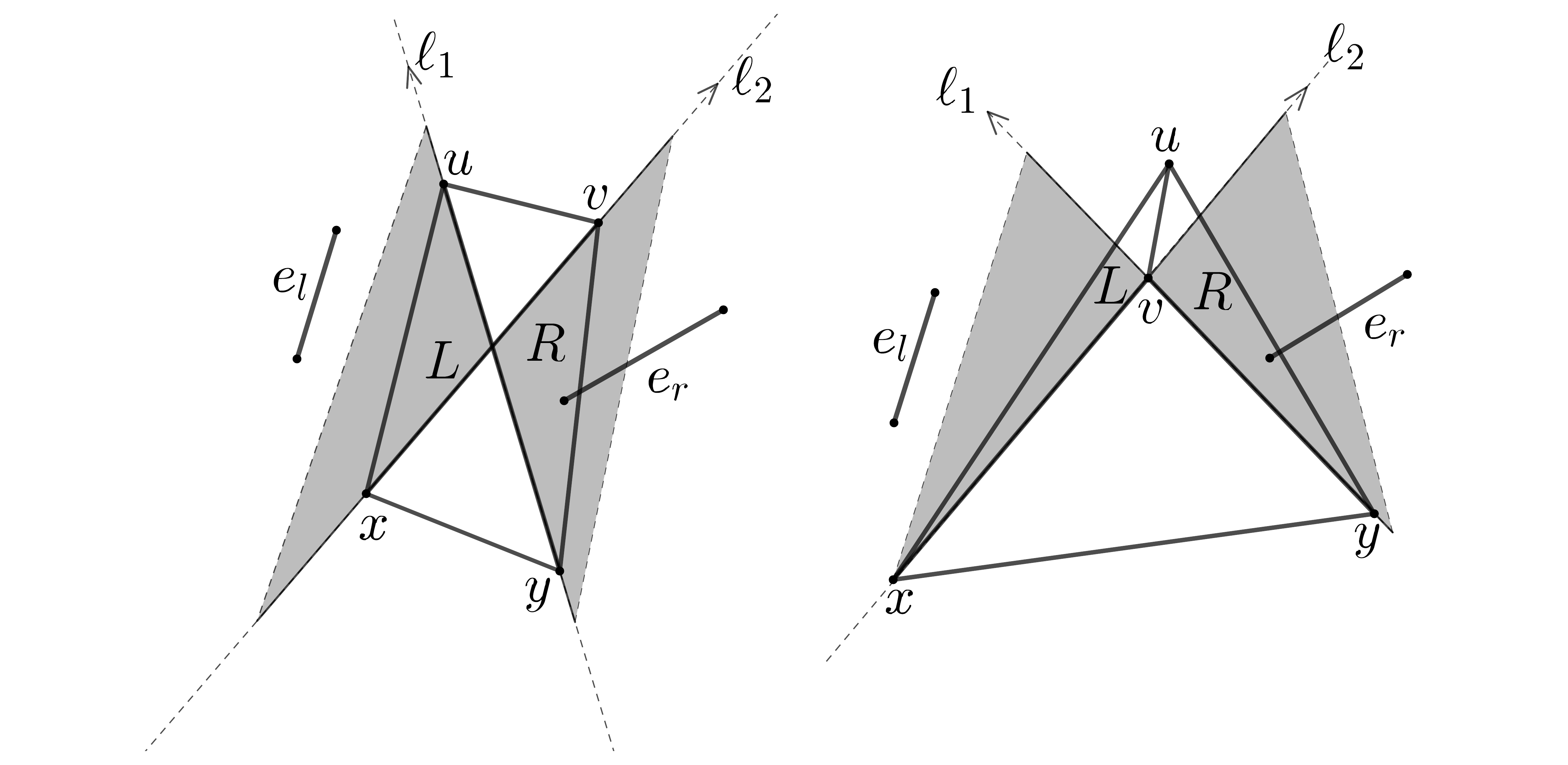}
	\caption{\small The convex hull of $\dd$ is either a quadrilateral (left) or a triangle (right). The set formed by the 8 continuous segments is $\ss$. The opposite quadrants $L$ and $R$ are the shaded regions.}
	\label{f:k4}
\end{figure}
\end{proof}


\subsection{Mutual-visibility of $D(P)$ when $m:=|\widehat{P}|\in \{5,6,7\}$}\label{sec:visibility=567}

Let $v_1,v_2,\ldots ,v_m$ be the points in $\widehat{P}$ and suppose that they appear in clockwise cyclic order on $\widehat{P}$. In this section, addition is taken mod $m$ (i.e.,  $m+i=i$). Let ${\mathcal D}_m:=\{v_iv_j~:~1\leq i < j \leq m\}\subset \pp$. The set of segments drawn in Figure~\ref{f:ch57}~(left) (respectively,~right) is precisely ${\mathcal D}_5$ (respectively, ${\mathcal D}_7$).

For $i\in[m]$, let $e_i:=v_iv_{i+1}$ and let $P_i$ be the subset of $P$ that lies in interior of the triangle $v_{i+(m-1)}v_{i}v_{i+1}$. Let $P^-_i$ (respectively, $P_i^+$) be the subset of $P_i$ that lies on
$\ell^-(v_iv_{i+2})$ (respectively, $\ell^+(v_iv_{i+(m-2)})$) and let $P_i^0:=P_i\setminus (P_i^-\cup P_i^+)$. Then $P_i$ is disjoint union of $P_i^-, P^0_i, P_i^+$, and $P_i^-=P_{i+1}^+$ for each $i\in[m]$. See Figure~\ref{f:ch57}~(left).


\begin{lemma}\label{l:ch5}
If $n\geq 5$ and $|\widehat{P}|=5$, then $\mu(D(P))\geq \binom{n}{2}-9$.	
\end{lemma}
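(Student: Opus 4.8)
We want to exhibit a set $\ss\subseteq\pp$ with $|\ss|\le 9$ so that every $\ss$-pair $(a,b)$ satisfies one of conditions (1)--(4) of Lemma~\ref{l:main}; then $\mu(D(P))\ge\binom n2-9$ follows immediately. Since $n\ge 5$ and $m=|\widehat P|=5$, we write $\widehat P=\{v_1,\dots,v_5\}$ in clockwise cyclic order, so the five boundary edges $e_1,\dots,e_5$ are automatically clean in $\pp$ and pairwise ``almost disjoint'' (consecutive $e_i,e_{i+1}$ share a vertex $v_{i+1}$, but non-consecutive ones are disjoint). The natural first move is to try to feed the problem into the sufficient conditions of Section~\ref{sec:suficient}. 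The five boundary edges of a pentagon are \emph{not} pairwise disjoint, so Proposition~\ref{p:5-disj} does not apply directly; but if some triangle $v_{i-1}v_iv_{i+1}$ is empty of points of $P$, then $e_{i-1},e_i$ can be swapped for a single clean segment, or more usefully one finds a good-triangle and invokes Proposition~\ref{p:g-triangle} (though that requires $m\ge 6$), or a good-2-set and invokes Proposition~\ref{p:g2s}. So the real content is the case where the interior points are distributed so that none of the pre-packaged conditions fires, and one must build $\ss$ by hand.

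\textbf{Construction of $\ss$.} Following the pattern of Lemmas~\ref{l:ch3} and~\ref{l:ch4}, I would take $\ss$ to consist of a constant number of ``diagonal-type'' segments of $\pp$ together with whichever boundary edges are needed, keeping $|\ss|\le 9$. A first candidate is $\ss={\mathcal D}_5\setminus\{e_1,\dots,e_5\}$ together with a few more segments: ${\mathcal D}_5$ has $\binom 52=10$ segments, five of which are the clean boundary edges $e_i$ and five of which are the diagonals $v_iv_{i+2}$. The plan is to let $\ss$ be the five diagonals of the pentagon plus up to four extra segments chosen to ``shield'' the troublesome $\ss$-pairs; the extra segments should be incident to interior points lying in the triangle $v_{i-1}v_iv_{i+1}$ that contains points (at least one such triangle is nonempty since $n\ge 5$), chosen exactly as $v'$ was chosen in the proof of Lemma~\ref{l:ch4} so as to make a small empty triangle $v_iv'v_{i+1}$ together with clean segments $v'v_i$, $v'v_{i+1}$. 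Once $\ss$ is fixed, let $(a,b)$ be an arbitrary $\ss$-pair, so $a\cap b\ne\emptyset$ and $a,b\notin\ss$; one must produce the connecting segments $f_1$ (or $f_1,f_2$) in $\ss$ required by Lemma~\ref{l:main}~(2) or~(3). Since Theorem~\ref{p:diam} gives $\diam(D(P))\le 4$ only when $n=5$, and $\diam(D(P))\le 3$ for $n\in\{6,7,8\}$, one will need the full four-segment path of Lemma~\ref{l:main}~(4) precisely in the $n=5$ subcase, where $\pp$ itself is small enough to check by exhausting the finitely many order types.

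\textbf{The case analysis.} The bulk of the argument is a case split on how $a$ and $b$ meet the boundary edges $e_i$ and the chosen diagonals --- exactly the style of the proofs of Lemmas~\ref{l:ch3}, \ref{l:ch4} and Propositions~\ref{p:g-triangle}, \ref{p:g2s}. Concretely: if $(a\cup b)$ misses some segment $f\in\ss$ entirely, we are done by Lemma~\ref{l:main}~(2); so assume $a\cup b$ meets every segment of $\ss$, which forces the endpoints of $a$ and $b$ to lie among $\{v_1,\dots,v_5\}$ and the few chosen interior points, severely restricting the possibilities. One then shows that in each surviving configuration there is a two-step path $af_1b$ with $f_1\in\ss$ disjoint from $a\cup b$, or a three-step path $af_1f_2b$; the pentagon geometry (a diagonal $v_iv_{i+2}$ is disjoint from the opposite edge $e_{i+3}=v_{i+3}v_{i+4}$, etc.) makes these choices routine once the bookkeeping is set up. The main obstacle, I expect, is the verification that $9$ segments genuinely suffice for \emph{every} distribution of the interior points $P_i^-,P_i^0,P_i^+$ among the five ``ears'' of the pentagon: unlike the $|\widehat P|=3,4$ cases, where a single good quadrilateral or a single empty triangle did all the work, here several ears may simultaneously contain points, and one must argue that one can always localize the shielding diagonals to a single nonempty ear while the five boundary edges plus the five pentagon diagonals handle all pairs not involving that ear --- keeping the total at $9$, not more. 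If a naive choice overshoots $|\ss|$, the fix is to observe that the pentagon diagonals already form a $5$-cycle in $D(P)$ of pairwise ``crossing'' segments whose complementary structure is rigid enough that only $4$ of the $9$ slots need to be spent on ear-dependent segments, the other $5$ being the canonical diagonals (or, symmetrically, $5$ boundary edges plus $4$ diagonals), and then the case check closes.
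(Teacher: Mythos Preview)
Your proposal is a strategic outline rather than a proof, and the outline has a concrete defect. You suggest taking $\ss$ to be the five pentagon diagonals $v_iv_{i+2}$ together with up to four ``ear-dependent'' extras, leaving the boundary edges $e_1,\dots,e_5$ in $\pp'$. But then $(e_1,e_2)=(v_1v_2,v_2v_3)$ is an $\ss$-pair with $d_{D(P)}(e_1,e_2)=2$ (a common neighbour is $e_4=v_4v_5$), and you must produce $f\in\ss$ disjoint from $e_1\cup e_2=\{v_1,v_2,v_3\}$. Every pentagon diagonal meets $\{v_1,v_2,v_3\}$, so the five diagonals are useless here; the burden falls entirely on the four extras, and you give no argument that these can be chosen to cover all ten such consecutive pairs $(e_i,e_{i+1})$ simultaneously while still doing the other work you assign them. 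In the extreme case $n=5$ there are no extras available at all, and your fallback to Lemma~\ref{l:main}~(4) does not help because the pair $(e_1,e_2)$ has distance $2$, not $4$. Your remark that the diagonals ``form a $5$-cycle in $D(P)$'' is also backwards: the five diagonals pairwise intersect, hence form an independent set in $D(P)$.

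The paper takes a different route: rather than a single universal $\ss$, it splits into five cases according to which of the regions $P_i$, $P_i^-$, $P_i^0$, $P_i^+$ are nonempty, and builds a bespoke $\ss$ in each case. In Case~1 (some $P_i=\emptyset$) the set $\ss$ consists of the five boundary edges together with four of the diagonals --- essentially the complement of your choice --- and the analysis is short because $\pp'$ then contains only one diagonal $v_2v_5$ plus segments touching interior points. In Cases~2--5 the set $\ss$ is built from small triangles $\{e_i,u_iv_i,u_iv_{i+1}\}$ on carefully chosen interior points $u_i$ closest to a boundary edge, together with one or two additional clean segments; the sizes of $\ss$ in these cases are $7$, $8$, $6$, and $5$ respectively, and only Lemma~\ref{l:main}~(2) is ever invoked. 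The missing idea in your sketch is precisely this case split and the use of interior-point triangles; you correctly anticipate that the distribution among the ears is ``the main obstacle'', but you do not resolve it.
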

\begin{proof} We remark that in this proof $m=5$.


{\bf Case 1}. Suppose that  $P_i=\emptyset$ for some $i\in [5]$. Assume w.l.o.g. that  $P_1=\emptyset$.
Let $\mathcal{S}:=\{e_1,e_2,e_3,e_4,e_5, v_1v_3, v_1v_{4}, v_{2}v_{4}, v_{3}v_{5}\}$, and let $(a,b)$ be an $\ss$-pair.
By Lemma~\ref{l:main}~(1)-(2), it is enough to show that $\mathcal{S}$ has a segment
$f$ such that $(a \cup b) \cap f = \emptyset$, unless $a\cap b=\emptyset$.

We can assume that each segment of $\ee:=\{e_1, e_2,\ldots ,e_5\}$ intersects $a\cup b$, as otherwise we are done. Then either $a$ or $b$
intersects at least three elements of $\ee$, say $a$. Since $v_2v_{5}$ is the unique segment of $\pp'$ with this property, we must have that $a=v_2v_5$.

Since if $b\cap e_3=\emptyset$, then $f=e_3$ is the required, we can assume either $v_3$ or $v_4$ is an endpoint of $b$.  Since  $P_1=\emptyset$, then $b$ cannot cross $a$, contradicting that  $a\cap b \neq \emptyset$.


{\bf Case 2}.  Suppose that $P_i^-, P_{i+2}^-$ are nonempty for some $i\in [5]$.  Assume w.l.o.g. that  $P^-_1\neq\emptyset$ and $P^-_3\neq\emptyset$.
 Let $u_1\in P^{-}1$ (resp. $u_3 \in P^-_3$) be the point that is closest to $e_1$ (resp. $e_{3}$).
 By Case 1 we have $P_5\neq \emptyset$. Let $u_5$ be a furthest point of $P_5$ to the segment $v_1v_4$.

Let $\ss_1:=\{e_1,u_1v_1,u_1v_{2}\}$, $\ss_2:=\{e_{3},u_3v_{3},u_3v_{4}\}$ and $\mathcal{S}:=\ss_1\cup\ss_2\cup\{u_5v_{5}\}$.
Then $e_1, e_3$ and $u_5v_5$ are clean in  $\pp$ and pairwise disjoint. Let $(a,b)$ be an $\ss$-pair. By Lemma~\ref{l:main}~(2), it is enough to
show that $\mathcal{S}$ has a segment $f$ such that $(a \cup b) \cap f = \emptyset$.

 We can assume that $(a\cup b)\cap u_5v_5\neq \emptyset$, as otherwise $f=u_5v_5$ is the required. Assume w.l.o.g. that $a$ has an endpoint in $\{u_5, v_5\}$. Then $a$ is disjoint from $\ss_i$ for some $i\in \{1,2\}$. Suppose that $a$ is disjoint from $\ss_1$. Then $b$ must have an endpoint in $\{v_1,v_2\}$, as otherwise $f=v_1v_2$ is the required. If $v_1\in b$ (resp. $v_2\in b$), then $a\cap b\neq \emptyset$
 implies that some $f\in \{u_1v_2, e_3\}$ (resp. $f\in\{u_1v_1,e_3\}$) is the required. The case in which $a$ is disjoint from $\ss_2$ is totally analogous.


{\bf Case 3}.  Suppose that $P_i^-, P_{i}^+$ are nonempty for some $i\in [5]$.  Assume w.l.o.g. that  $P^-_1\neq\emptyset$ and $P^+_1\neq\emptyset$.
Let $u_1\in P_1^-$ (resp. $u_{5} \in P_1^+$) be the point that is closest to $e_1$ (resp. $e_{5}$).
By Case 2, we can assume that $P_3^-, P_3^+$ and $P_4^-$ are empty. These facts and Case 1 imply that $P^0_3$ and $P_4^0$ are nonempty. Let $u_{3}\in P^0_3$ (resp. $u_{4} \in P^0_4$) be the point that is closest to $v_{3}$ (resp. $v_{4}$).

Let $\ss_1:=\{e_1,u_1v_1, u_1v_{2}\}$, $\ss_2:=\{e_{5},u_5v_5, u_5v_1\}$, $\ss_3:=\{u_{3}v_3, u_{4}v_4\}$ and
$\mathcal{S}:=\ss_1\cup\ss_2\cup \ss_3$. Then $\ss_1$ and $\ss_2$ are triangles whose intersection is $v_{1}$.
 Let $(a,b)$ be an $\ss$-pair. By Lemma~\ref{l:main}~(2), it is enough to show that
 $\mathcal{S}$ has a segment $f$ such that $(a \cup b) \cap f = \emptyset$.

$\bullet$ Suppose first that $v_1\in a \cup b$. It is easy to see that if $v_{1}=a\cap b$, then the required $f$ is some element of $\ss_3\cup \{u_5v_5,u_1v_2\}$. Suppose w.l.o.g. that $v_1$ is in $a$ but not in $b$.
Then $a$ is incident with at most one segment of $\ss_3$. If $b$ is disjoint from $\ss_3$, the required $f$ is either $u_3v_3$ or $u_4v_4$.
Similarly, if $b$ is disjoint from $\ss_1\cup  \ss_2$, the required $f$ is either $u_1v_2$ or $u_5v_5$. Then, $b$ must have an endpoint in
$\{u_1, v_2, u_5,v_5\}$ and the other in $\{u_3, v_3, u_4, v_4\}$. From these fact it is easy to see that the required $f$ is some element of
$\ss_3\cup \{u_1v_2, u_5v_5\}$.

 $\bullet$ Suppose that $v_{1} \not\in a\cup b$. We  assume that each segment of $\ss_1\cup \ss_2$ intersects $a\cup b$, as otherwise we are done. This fact and $v_{1} \not\in a\cup b$ imply that at least three points
of $\{v_2, v_5, u_1, u_5\}$ are endpoints of $a\cup b$, and hence the required $f$ is some element of $\ss_3$.


{\bf Case 4}.  Suppose that $P_i^-=P_{i+1}^+$ is nonempty for some $i\in [5]$. Assume w.l.og. that  $P^-_1\neq\emptyset$. By Cases 2 and 3
we can assume that $P_1^+, P_2^-, P^-_4$ and $P_4^+$ are empty. These facts and Case 1 imply that $P^0_3, P^0_4$ and $P^0_5$ are nonempty.
 Let $u_1\in P^-_1$ be the point that is closest to $e_1$. For $j\in \{3,4,5\}$, let $u_{j}\in P^0_j$  be the point that is closest to $v_{j}$.

Let $\ss_1:=\{e_1,u_1v_1,u_1v_{2}\}$, $\ss_2:=\{u_3v_{3},u_4v_{4},u_5v_{5}\}$ and $\mathcal{S}:=\ss_1\cup\ss_2$. Then $\ss_1$  forms a triangle and the segments in $\ss_2$ are clean in $\pp$ and pairwise disjoint. Let $(a,b)$ be an $\ss$-pair. By Lemma~\ref{l:main}~(2), it is enough to show that $\mathcal{S}$ has a segment $f$ such that $(a \cup b) \cap f = \emptyset$.

We assume that each segment of $\ss_2$ intersects $a\cup b$, as otherwise we are done. Since the segments in $\ss_2$ are clean and pairwise disjoint, then at least three points of $\{u_3, u_4, u_5, v_3, v_4, v_5\}$ are endpoints of $a\cup b$, and hence the required $f$ is some element of $\ss_1$.


{\bf Case 5}.  Suppose that $P_i=P^0_i$ for each $i\in [5]$. By this supposition and  Case 1, we can assume that
$P^0_i\neq \emptyset$ for each $i\in [5]$.
Let $u_{i}\in P^0_i$  be the point that is closest to $v_{i}$. Let $\ss:=\{u_1v_1, \ldots , u_5v_5\}$. Then the segments of $\ss$
are clean in $\mathcal{P}$ and pairwise disjoint. Then $\ss$ satisfies the hypothesis of Proposition~\ref{p:5-disj}, and we are done.
\end{proof}

\begin{figure}
\includegraphics[width=1\textwidth]{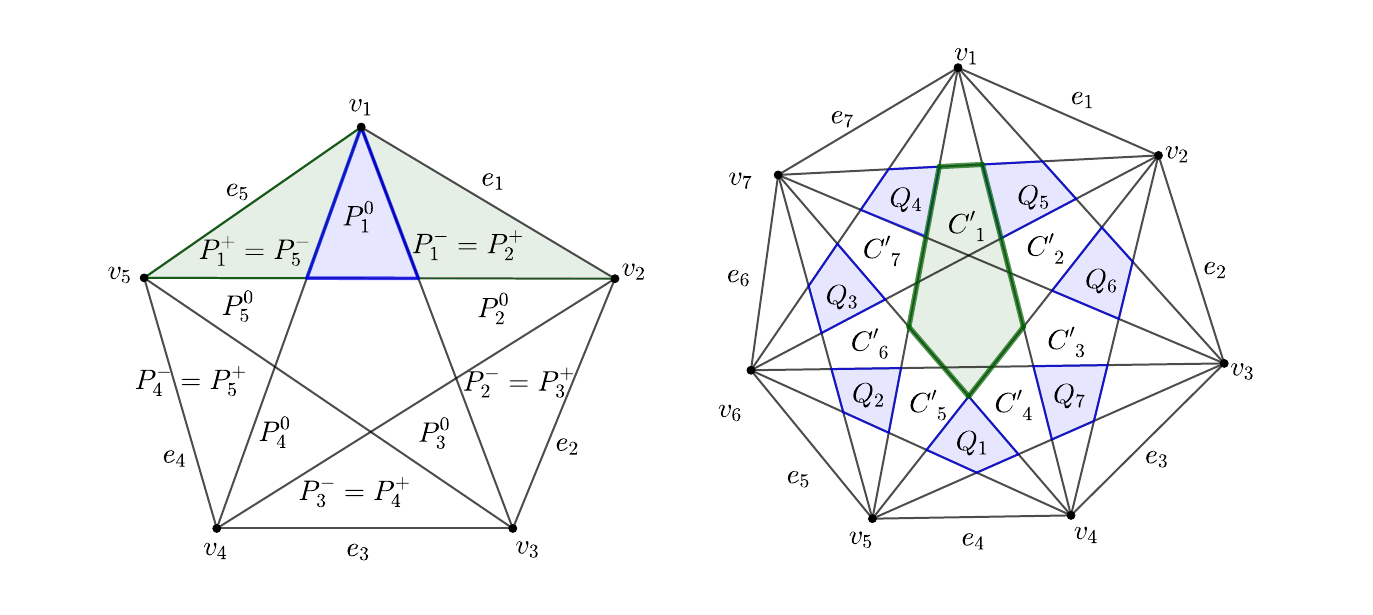}
\caption{\small On the left we have $|\widehat{P}|=5$ showing
the labels that we use for the points and the regions of $\dd_5$ that are incident with $\widehat{P}$. This notation is used
in all the proofs given in Section~\ref{sec:visibility=567}. On the right we have $|\widehat{P}|=7$, and the labeling
depicted is used only in the proof of Lemma~\ref{l:ch7}. In particular, the set of points of $P$ that lies in the
green pentagon is $C_1$, and $C'_1$ is a subset of $C_1$.}
\label{f:ch57}
\end{figure}


\begin{lemma}\label{l:ch6}
If $n\geq 6$ and $|\widehat{P}|=6$, then $\mu(D(P))\geq \binom{n}{2}-9$.	
\end{lemma}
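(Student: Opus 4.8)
\textbf{Proof plan for Lemma~\ref{l:ch6}.}

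The plan is to mirror the case analysis used in the proof of Lemma~\ref{l:ch5}, but now exploiting the extra structure available when $|\widehat{P}|=6$, and in particular the propositions from Section~\ref{sec:suficient}. Let $v_1,\ldots ,v_6$ be the points of $\widehat{P}$ in clockwise cyclic order, let $e_i:=v_iv_{i+1}$ and keep the notation $P_i$, $P_i^-$, $P_i^0$, $P_i^+$ introduced just before Lemma~\ref{l:ch5} (so $P_i$ is the set of points of $P$ in the interior of the triangle $v_{i+5}v_iv_{i+1}$, split by the lines $\ell(v_iv_{i+2})$ and $\ell(v_iv_{i+4})$). Note that $e_1,e_2,\ldots ,e_6$ are clean in $\pp$, and the three segments $e_1,e_3,e_5$ are pairwise disjoint (as are $e_2,e_4,e_6$); this already gives three clean pairwise disjoint segments for free, so the whole difficulty is in producing two more to apply Proposition~\ref{p:5-disj}, or otherwise landing in the hypotheses of Proposition~\ref{p:g-triangle} or Proposition~\ref{p:g2s}.

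First I would dispose of the ``sparse'' configurations. If some $P_i$ is empty, say $P_1=\emptyset$, then the diagonals $v_1v_3,v_1v_4,v_1v_5$ together with the $e_i$'s behave as in Case~1 of Lemma~\ref{l:ch5}: I would take $\ss$ to consist of $e_1,\ldots ,e_6$ plus a small number of diagonals incident with $v_1$, and check via Lemma~\ref{l:main}~(1)-(2) that any $\ss$-pair $(a,b)$ whose union meets every $e_i$ forces $a$ (or $b$) to be one of the few long diagonals spanning the empty triangle, which then cannot be crossed by its partner. Next, if $P_i^0\neq\emptyset$ for all $i\in[6]$, pick in each $P_i^0$ the point $u_i$ closest to $v_i$; then $u_1v_1,\ldots ,u_6v_6$ are clean (each $u_iv_i$ lies inside the cone at $v_i$ bounded by $\ell(v_iv_{i+2})$ and $\ell(v_iv_{i+4})$ and, being the innermost such point, is uncrossed) and pairwise disjoint, giving six — more than enough — clean pairwise disjoint segments; five of them satisfy Proposition~\ref{p:5-disj}. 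The remaining possibilities are when every $P_i$ is nonempty but some $P_i^0$ is empty, i.e. $P_i=P_i^-\cup P_i^+$ for some $i$; by the cyclic relation $P_i^-=P_{i+1}^+$ these ``side'' points can be threaded around the hull. Here I would argue that one can always find either a good-triangle (when some $v_j$ has a point of $P$ in its incident triangle $v_{j-1}v_jv_{j+1}$ that sees $v_j$ and $v_{j+1}$ without obstruction, so that $xv_j,xv_{j+1}$ are clean and condition~$(i)$ of Definition~\ref{d:g-triangle} holds — note $m=6\geq 6$), in which case Proposition~\ref{p:g-triangle} finishes it; or else, when points cluster near two ``opposite'' edges, a good-2-set $\dd(e_i,e_j)$ with $e_i,e_j$ a pair of the clean edges $e_1,e_3,e_5$ and with the two witness segments $e_l,e_r$ coming from the clean segments $u_kv_k$ or the remaining $e_k$'s lying in the opposite quadrants, so that Proposition~\ref{p:g2s} applies (recall $n\geq 8$ is automatic once the configuration is dense enough; the low-$n$ leftovers are handled directly). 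In the genuinely tight sub-cases — analogues of Cases 2, 3, 4 of Lemma~\ref{l:ch5} — I would explicitly build $\ss$ as a union of one or two ``triangles'' $\{e_i,u_iv_i,u_iv_{i+1}\}$ together with clean disjoint segments $u_jv_j$, total size $\le 9$, and run the same endpoint-counting argument: if $(a\cup b)$ meets all of $\ss$'s clean disjoint pieces, at least three of their endpoints lie on $a\cup b$, which pins down $a$ and $b$ enough to exhibit $f\in\ss$ with $(a\cup b)\cap f=\emptyset$ or to contradict $a\cap b\neq\emptyset$.

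The main obstacle I anticipate is the bookkeeping in the intermediate regime where several $P_i$ are nonempty but their points sit on the ``$-$/$+$'' sides rather than in the ``$0$'' cones: here neither Proposition~\ref{p:5-disj} nor the good-triangle criterion is immediate, and one must carefully choose which two of $e_1,e_3,e_5$ (or $e_2,e_4,e_6$) to keep and which diagonals and innermost interior segments to add so that the resulting $\ss$ has size at most $9$ and simultaneously blocks every $\ss$-pair. Organizing this as a short chain of cases — (a) some $P_i=\emptyset$; (b) all $P_i^0\neq\emptyset$; (c) some $P_i^0=\emptyset$ with a visible interior point yielding a good-triangle; (d) the residual configurations, split by how many consecutive $P_i^0$ vanish — should keep the argument finite and parallel to Lemma~\ref{l:ch5}, with Propositions~\ref{p:5-disj},~\ref{p:g-triangle},~\ref{p:g2s} doing the heavy lifting in cases (b), (c), and the dense part of (d) respectively, and only case (d)'s sparse remnants requiring a hands-on $\ss$ of size $9$ and a Lemma~\ref{l:main}~(2) verification.
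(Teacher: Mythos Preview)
Your overall strategy---reduce to Propositions~\ref{p:5-disj},~\ref{p:g-triangle},~\ref{p:g2s} and fall back on an explicit $\ss$ with $|\ss|\le 9$ and Lemma~\ref{l:main}---is exactly the paper's strategy. But the case structure you sketch has three concrete gaps that the paper's proof is organized precisely to avoid.

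\emph{(i) The case $n=6$.} Here $P=\widehat P$, so your case~(a) applies, and you propose to verify Lemma~\ref{l:main}~(1)--(2). That is not enough: when $n=6$ the diameter of $D(P)$ can be $3$ (take $a=v_1v_3$, $b=v_1v_5$; every segment on $\{v_2,v_4,v_6\}$ crosses one of them, so $a,b$ have no common neighbour). You must also supply the two-segment paths required by Lemma~\ref{l:main}~(3). The paper isolates $n=6$ (and $n=7$) as separate cases before assuming $n\ge 8$.

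\emph{(ii) The central region $P_c$.} Unlike the pentagon case, for $m=6$ the six triangles $v_{i+5}v_iv_{i+1}$ do \emph{not} cover the hexagon: there is a central region $P_c:=P\setminus(\widehat P\cup\bigcup_i P_i)$ that can contain points. Your plan never mentions $P_c$; every case you list is phrased in terms of the $P_i$'s alone. The paper's Cases~7 and~8 deal explicitly with points in $P_c$.

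\emph{(iii) Cleanness in case~(b).} The assertion ``all $P_i^0\neq\emptyset$ $\Rightarrow$ the segments $u_iv_i$ are clean'' is not true as stated. Taking $u_i$ closest to $v_i$ inside $P_i^0$ only prevents crossings by segments with an endpoint in $P_i^0$; a point in $P_i^-$, $P_i^+$, another $P_j$, or $P_c$ can still produce a segment crossing $u_iv_i$. In the paper this construction (five clean disjoint $u_iv_i$'s) is only invoked after one has already reduced to $P_i=P_i^0$ for every $i$, which is a much stronger hypothesis than your~(b). Your decomposition (a)--(d) does not force this: it is perfectly possible that every $P_i^0$ is nonempty \emph{and} several $P_i^\pm$ are nonempty, and then neither (b) as you state it nor (c)/(d) clearly applies.

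The paper's case split is instead driven by which combinations of $P_i$, $P_i^\pm$, $P_i^0$ are nonempty (opposite $P_i,P_{i+3}$; two of $P_i^-,P_i^0,P_i^+$; $P_i^-$ together with a nearby $P_j^0$; etc.), each such combination yielding a good-$2$-set, a good-triangle, or an explicit $\ss$. That organization is what makes the reductions chain cleanly and what handles $P_c$ at the end.
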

\begin{proof} The strategy of our proof is closely aligned with that utilized in the proof of Lemma~\ref{l:ch5}.

{\bf Case 1}. Suppose that $n=6$. Let $\ss:=\{v_1v_4, v_2v_5, v_3v_6, e_1, e_2, e_3,e_4,e_5,e_6\}$.
 Let $(a, b)$ be an $\ss$-pair. Then
 $\pp\setminus \ss=\{v_1v_3, v_3v_5, v_1v_5,v_2v_4, \-v_4v_6, v_2v_6\}$.
By Lemma~\ref{l:main}~(2)-(3), we need to show that $\mathcal{S}$ has a segment $f$ such that $(a \cup b) \cap f = \emptyset$ when
$d_{D(P)}(a,b)=2$, and that $\mathcal{S}$ has two segments $g_1, g_2$ such that $g_1\cap g_2=\emptyset$ and
$a \cap g_1 = \emptyset = b \cap g_2$, when $d_{D(P)}(a,b)=3$.

 From  $a\cap b\neq \emptyset$ it is easy to see that either $a=v_iv_{i+2}$ and $b=v_{i+1}v_{i+5}$ for some $i\in [6]$ or
 $a=v_jv_{j+2}$ and $b=v_jv_{j+4}$ for some $j\in [6]$. In the former case, $d(a,b)=2$ and
$f=v_{i+3}v_{i+4}$ is the required. In the latter case, $d(a,b)=3$ and $g_1=v_{j+4}v_{j+5}$ and
$g_2=v_{j+1}v_{j+2}$ are the required.


{\bf Case 2}. Suppose that $n=7$. Then $P\setminus \widehat{P}$ contains exactly one point, say $x$.
It is not hard to see that there must exist $i\in [6]$ such that $x$ lies in the interior of the triangle formed by
$e_i=v_iv_{i+1}$ and the large diagonals $v_iv_{i+3}$ and $v_{i+1}v_{i+4}$. If $x$ lies on $\ell^+(v_{i+2}v_{i+5})$ (resp.
$\ell^-(v_{i+2}v_{i+5})$), then it is straightforward to check that $\triangle=xv_iv_{i+1}$
(resp. $\triangle=xv_{i+3}v_{i+4}$) is a good-triangle.  The statement follows from the existence of such $\triangle$
and  Proposition~\ref{p:g-triangle}.


 In view of Cases 1 and 2, from now on we assume $n\geq 8$.

{\bf Case 3}. Suppose that $P_{i}, P_{i+3}\neq \emptyset$ for some $i\in[6]$. Assume w.l.o.g. that $P_{1}, P_4\neq \emptyset$. Let $u_1\in P_{1}$ and $u_4\in P_{4}$ such that $u_1$ (resp. $u_4$) is a furthest point of $P_1$ (resp. $P_{4}$) to the segment $v_2v_6$ (resp. $v_3v_5$),   
then $\mathcal{D}(v_2v_3, v_5v_6)$ together with
$e_l:=v_{4}u_4$ and $e_r:=v_{1}u_1$ form a good-2-set of $\pp$, and we are done by Proposition~\ref{p:g2s}.


{\bf Case 4}. Suppose that at least two sets of $P^-_{i}, P^0_i$ and $P^+_i$ are nonempty for some $i\in[6]$.
Assume w.l.o.g. that $i=2$. Let $u_2$ be a furthest point of $P_2$ to the segment
$v_1v_3$. Then $u_2v_2$ is clean in $\pp$. By Proposition~\ref{p:g2s}, it is enough to show the existence of a good-2-set in $\pp$.

Suppose that $u_2\in P^0_2$. By hypothesis $ P^-_2\cup P^+_2\neq \emptyset$.
If $ P^-_2\neq \emptyset$, then let $u$ be a furthest point of $P_3$ to the segment
$v_2v_4$, then $\mathcal{D}(u_2v_2, v_4v_5)$, $e_l:=uv_3$ and $e_r:=v_{1}v_6$ form the required good-2-set. Similarly, if $P^+_2\neq \emptyset$, then let $u$ be a furthest point of $P_1$ to the segment
$v_2v_6$, then $\mathcal{D}(u_2v_2, v_5v_6)$, $e_l:=v_3v_4$ and
$e_r:=uv_{1}$ form the required good-2-set.

Suppose now that $P^0_2=\emptyset$. If $u_2\in P^-_2$, then  $P_2^+\neq\emptyset$. Let $u$ be a furthest point of $P_1$ to the segment
$v_2v_6$, then
$\mathcal{D}(u_2v_2, v_5v_6)$, $e_l:=v_3v_4$ and $e_r:=uv_{1}$ form the required good-2-set.
Similarly, if $u_2\in P^+_2$, then  $P_2^-\neq\emptyset$. Let $u$ be a furthest point of $P_3$ to the segment
$v_2v_4$, then
$\mathcal{D}(u_2v_2, v_4v_5)$, $e_l:=uv_3$ and $e_r:=v_{1}v_6$ form the required good-2-set.

{\bf Case 5}.  Suppose that $P_{i}^-\neq \emptyset$ and $P_{i+2}^0\cup P_{i+5}^0 \neq \emptyset$ for some $i\in[6]$.
Assume w.l.o.g. that $i=1$. We only analyze the case $P_{3}^0\neq \emptyset$, as the case $P_{6}^0\neq \emptyset$
is totally analogous.   Then we can assume that $P_{4}\cup P_{5}\cup P_6= \emptyset$ by Case 3.
Similarly, by Case 4 we can assume that
$(P_{1}\setminus P^-_{1})\cup (P_{2}\setminus P^+_{2}) \cup (P_{3}\setminus P^0_{3})=\emptyset$.
Let $u_2$ be a furthest point of $P_{1}^-=P_2^+$ to the segment
$v_1v_3$. Then $u_2v_2$ is clean in $\pp$.  Let $u_3$ be a furthest point of $P_3=P^0_3$ to the segment
$v_2v_4$. 
Then $\mathcal{D}(u_2v_2, v_4v_5)$, $e_l:=u_3v_3$ and $e_r:=v_{1}v_6$ form a good-2-set of $\pp$, and we are done  by Proposition~\ref{p:g2s}.

{\bf Case 6}. Suppose that $P_{i}^-\neq\emptyset$ for some $i\in[6]$. Assume w.l.o.g. that $i=1$.
Then we can assume that $P_{4}\cup P_{5}=\emptyset$,
$(P_{1}\setminus P^-_{1})\cup (P_{2}\setminus P^+_{2})=\emptyset$ and
$P_{3}^0\cup P_{6}^0 = \emptyset$  by Cases 3, 4 and 5, respectively.

Suppose that $|P^-_{1}|=1$. Let $x$ be the point in $P^-_{1}$. Then the segments of the triangle
$\triangle:=xv_1v_2$ are clean in $\pp$, and so $\triangle$ is a good-triangle. The statement follows from the existence of such $\triangle$ and Proposition~\ref{p:g-triangle}.
Now, suppose that $|P^-_{1}|\geq 2$. Let $x$ and $y$ be distinct points of
 $P^-_{1}$.
Let $\ss_1:=\{e_1,e_3,e_5\}$,
$\ss_2:=\{ v_1v_{4},v_{2}v_{5},v_3v_{6}, v_{2}v_{6}, v_{1}v_{3}\}$ and $\mathcal{S}:=\ss_1\cup \ss_2\cup \{xy\}$.
Let $(a, b)$ be an $\ss$-pair. By Lemma~\ref{l:main} (2), it is enough to show that $\mathcal{S}$ has a segment $f$
such that $(a\cup b)\cap f=\emptyset$.

We can assume that $(a\cup b)\cap e_i\neq \emptyset$ for $e_i\in \ss_1$, as otherwise $f=e_i$
is the required. Then either $a$ or $b$ intersects exactly two segments of $\ss_1$.
Since $(a\cup b)\cap e_1\neq\emptyset$. Then $a$ or $b$ has an endpoint in $\{v_1,v_2\}$. We only analyze the case $v_2\in a\cup b$, as the case $v_1\in a\cup b$ is totally analogous. It is easy to see that if $v_2= a\cap b$, then $a$ and $b$ have their other endpoint in $e_3$, and  $f=e_5$
is the required. Suppose w.l.o.g. that $v_2$ is in $a$ but not in $b$. Then $a$ has its other endpoint in $e_3$ and so
$a\in \{ v_{2}v_{3}, v_{2}v_{4}\}$. Then $a\cap e_5=\emptyset$ and, as a consequence, $b$ must have an endpoint in $e_5$. Since $a\cap b\neq \emptyset$ and $v_{2}v_5,v_2v_6, v_3v_{6}\in \ss_2$, then
$b\in \{v_{4}v_5, v_{3}v_{5}, v_{4}v_{6}\}$. Then $f=xy$ is the required.


{\bf Case 7}. Suppose that $P_{i}^0\neq\emptyset$ for some $i\in[6]$. Assume w.l.o.g. that $i=1$. Then we can assume that $P_{4}=\emptyset$, $P_{1}\setminus P^0_{1}=\emptyset$ and  $P^-_{2} \cup P^-_{5}= \emptyset$  by Cases 3, 4 and 5, respectively. Let $u_1$ be the point of $P^0_{1}$ closest to $v_1$.


$\bullet$ Suppose that $ P_2^0\cup P_6^0\neq \emptyset$. We only analyze the case $P_{2}^0\neq \emptyset$, as the case $P_{6}^0\neq \emptyset$ is totally analogous.
Let $u_2$ be a furthest point of $P_2=P^0_2$ to the segment
$v_1v_3$. Then $\mathcal{D}(v_{1}u_1,v_{3}v_{4})$, $e_l:=v_{5}v_6$ and $e_r:=v_{2}u_2$ form a good-2-set of $\pp$, and we are done  by Proposition~\ref{p:g2s}.

$\bullet$ Suppose that $P_3^0\cup P_5^0\neq \emptyset$. We only analyze the case $P_{3}^0\neq \emptyset$, as the case $P_{5}^0\neq \emptyset$ is totally analogous. By previous case we can assume that $P_2^0\cup P_6^0 = \emptyset$.

Let $u_3$ be the point of $P^0_{3}$ closest to $v_3$. Let
$\mathcal{S}_1:=\{e_1, e_2, e_5\}, \mathcal{S}_2:=\{v_1u_1, v_2v_5,\- v_2v_6,v_3v_5, v_3v_6, v_4u_3\}$ and
$\mathcal{S}:=\mathcal{S}_1\cup\mathcal{S}_2$. Let $(a, b)$ be an $\ss$-pair.  By Lemma~\ref{l:main} (2), it is enough to show that
$\mathcal{S}$ has a segment $f$ such that $(a \cup b) \cap f = \emptyset$.

We can assume that $(a\cup b)\cap e_i\neq \emptyset$ for $e_i\in \ss_1$, as otherwise $f=e_i$
is the required. Since $v_2v_5, v_2v_6, v_3v_5, v_3v_6\in \ss_2$, neither $a$ nor $b$ can go from $e_2$ to $e_5$. Then, we can assume w.l.o.g. that $a$ intersects $e_2$ and that $b$ intersects $e_5$. Similarly, we can assume that
$(a\cup b)\cap g\neq \emptyset$ for each $g\in \{v_1u_1, v_4u_3\}$, as otherwise $f=g$ is the required.

Since $a$ intersects $e_2$, then $a$ has an endpoint in $\{v_2,v_3\}$. Suppose that $v_2\in a$. If $a\cap v_4u_3\neq \emptyset$ and $b\cap v_1u_1\neq \emptyset$ (resp. $a\cap v_1u_1\neq \emptyset$ and $b\cap v_4u_3\neq \emptyset$), then $\ell(v_2v_4)$ (resp. $\ell(v_2v_6)$) separates  $a$ from $b$, and we have $a\cap b=\emptyset$, a contradiction.
Now, suppose that $v_3\in a$. Since if $a=v_1v_3$ then $b\cap v_4u_3\neq \emptyset$, and we have $a\cap b=\emptyset$, a contradiction, then we can assume that $v_1\notin a$. Then $b\in\{v_1v_5,v_1v_6\}$, as otherwise $f=e_1$ is the required. The facts that $P_6=\emptyset$, $v_1\notin a$ and   $b\in\{v_1v_5,v_1v_6\}$ imply that $a\cap b=\emptyset$, a contradiction.

$\bullet$ Suppose that $(P_{1}^0 \cup P_{c})\setminus \{u_1\}$ contains at least one point $x$, where
$P_{c}:=P\setminus ((\bigcup_{i=1}^{6}P_i)\cup \widehat{P})$. By previous cases we know that
$P_1^0=\bigcup_{i=1}^6 P_{i}$. Let $\ss_1:=\{e_1,e_3,e_5\}$,
$\ss_2:=\{ v_1v_3,v_1v_{4},v_{2}v_{5},v_3v_{6}, v_{2}v_{6}\}$ and $\mathcal{S}:=\ss_1\cup \ss_2\cup \{u_1x\}$.
Let $(a, b)$ be an $\ss$-pair. By Lemma~\ref{l:main} (2), it is enough to show that $\mathcal{S}$ has a segment $f$
such that $(a\cup b)\cap f=\emptyset$.

We can assume that $(a\cup b)\cap e\neq \emptyset$ for every $e\in \ss_1$, as otherwise $f=e$
is the required.  Since $(a\cup b)\cap e_1\neq\emptyset$. Then $a$ or $b$ has an endpoint in $\{v_1,v_2\}$.

First, suppose $v_1 \in a \cup b$. If $v_1 = a \cap b$, then $f = e_3$ is the required.
Assume w.l.o.g. that $v_1 \in a$ and $v_1 \notin b$. This implies that $b$ has an endpoint on $e_3$. If $a$ (respectively, $b$) has its other endpoint in $e_5$, then $a \in \{v_1 v_5, v_1 v_6\}$ (respectively, $b \in \{v_3 v_5, v_4 v_5, v_4 v_6\}$). Since $a \cap b \neq \emptyset$, then $f = u_1 x$ is the required.

Suppose now that $v_2 \in a \cup b$. If $v_2 = a \cap b$, then $f = e_5$ is the required.
Assume w.l.o.g. that $v_2 \in a$ and $v_2 \notin b$. Then $a$ has its other endpoint in $e_3$, implying $a \in \{v_2 v_3, v_2 v_4\}$. Consequently, $a \cap e_5 = \emptyset$, and thus, $b$ must have an endpoint in $e_5$. Given that $a \cap b \neq \emptyset$ and
$v_2 v_5, v_2 v_6, v_3 v_6 \in \ss$, we have $b \in \{v_3 v_5, v_4 v_5, v_4 v_6\}$. Therefore, $f = u_1 x$ works.

{\bf Case 8}. Suppose that $P_{c} \neq\emptyset$. From Cases 6, 7, $P_i^-=P^+_{i+1}$ and
$P_i=P_i^-\cup P^0_i\cup P^+_{i}$ it follows that $P_i=\emptyset$ for each $i\in [6]$ and therefore
 $P=\widehat{P}\cup P_c$. By Case 2, we can assume that $|P_c|\geq 2$. Let $x$ and $y$ be distinct points of
 $P_c$.

Let $\ss_1:=\{e_1,e_3,e_5\}$,
$\ss_2:=\{ v_1v_{4},v_{2}v_{5},v_3v_{6}\}$ and $\mathcal{S}:=\ss_1\cup \ss_2\cup \{xy\}$.
Let $(a, b)$ be an $\ss$-pair. By Lemma~\ref{l:main} (2), it is enough to show that $\mathcal{S}$ has a segment $f$
such that $(a\cup b)\cap f=\emptyset$.

We can assume that $(a\cup b)\cap e_i\neq \emptyset$ for $e_i\in \ss_1$, as otherwise $f=e_i$
is the required. Then either $a$ or $b$ intersects exactly two segments of $\ss_1$.
By relabelling the points in $\widehat{P}$ if necessary, we may assume that
$a$ has an endpoint in $e_1$ and the other in $e_3$. Then
$a\in \{v_1v_{3}, v_{2}v_{3}, v_{2}v_{4}\}$. Then $a\cap e_5=\emptyset$ and, as a consequence, $b$ must have an endpoint in $e_5$. Since $a\cap b\neq \emptyset$ and $v_{2}v_5,v_3v_{6}\in \ss_2$, then
$b\in \{v_{4}v_5, v_{3}v_{5}, v_1v_5, v_1v_{6}, v_{2}v_{6}, v_{4}v_{6}\}$. Then $f=xy$ is the required.
\end{proof}


\begin{lemma}\label{l:ch7}
 If $n\ge 7$ and $|\widehat{P}|=7$, then $\mu(D(P))\geq \binom{n}{2}-9$.
\end{lemma}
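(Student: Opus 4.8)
The plan is to follow the scheme used in the proofs of Lemmas~\ref{l:ch5} and~\ref{l:ch6}. First I would dispose of the case $n=7$ (so $P=\widehat{P}$ is in convex position) by exhibiting an explicit nine-element set $\ss\subset\pp$; then, for $n\ge 8$, I would run a case analysis on how the points of $P\setminus\widehat{P}$ distribute among the regions $P_i,P_i^-,P_i^0,P_i^+$ and the central region $P_c:=P\setminus\big((\bigcup_{i=1}^{7}P_i)\cup\widehat{P}\big)$, invoking Propositions~\ref{p:5-disj}, \ref{p:g-triangle}, and~\ref{p:g2s} whenever the configuration allows, and reverting to a hand-built $\ss$ in the residual ``central'' configurations. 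Throughout, $n\ge 7$ forces $\diam(D(P))\le 3$ by Theorem~\ref{p:diam}, so only conditions (1)--(3) of Lemma~\ref{l:main} are ever relevant.

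For $n=7$ I would take $\ss$ to be the seven hull edges $e_1,\dots,e_7$ together with two long diagonals sharing an endpoint, say $v_1v_4$ and $v_4v_7$; thus $|\ss|=9$ and $\pp':=\pp\setminus\ss$ consists of the seven short diagonals $v_iv_{i+2}$ and five of the long diagonals. I would then verify Lemma~\ref{l:main} for every $\ss$-pair $(a,b)$. The key observation is that the set of hull vertices not covered by $a\cup b$ contains two consecutive vertices --- hence a hull edge of $\ss$ disjoint from $a\cup b$, which gives condition (2) --- unless the covered vertices form a set of the type $\{v_i,v_{i+2},v_{i+4},v_{i+6}\}$, in which case $a$ and $b$ must be the two long diagonals spanning those four vertices. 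For each such quadruple either one of the two spanning diagonals lies in $\ss$ (so $(a,b)$ is not an $\ss$-pair), or the surviving pair is one of $\{v_1v_5,v_3v_6\}$, $\{v_1v_5,v_3v_7\}$, $\{v_2v_5,v_3v_7\}$, each of which is at distance $3$ in $D(P)$, and for each of these two disjoint hull edges of $\ss$ supply condition (3) (e.g., $v_1v_2$ and $v_3v_4$ for $\{v_3v_6,v_1v_5\}$). Finally, short diagonals never produce a distance-$3$ pair, because any segment disjoint from $v_iv_{i+2}$ is a chord of the four consecutive hull vertices avoiding $v_{i+1}$, and no single segment of $\pp'$ can meet all three hull edges among those chords.

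For $n\ge 8$ the intended cases are, roughly: (a) if two ear-regions $P_i$ with sufficiently separated indices are nonempty --- or, more finely, an ear-region together with a suitably placed one of the $P_j^-,P_j^0,P_j^+$ --- then form a good-$2$-set $\dd(v_iv_{i+1},v_jv_{j+1})$ (or $\dd(u_iv_i,v_jv_{j+1})$ with $u_iv_i$ clean) completed by two hull-vertex-to-interior-point segments $e_l\in L$, $e_r\in R$ chosen as ``furthest from the relevant short diagonal'', and finish by Proposition~\ref{p:g2s}; (b) if a single interior point $x$ lies close enough to some edge $e_i$ that $xv_i$ and $xv_{i+1}$ are clean, form the good-triangle $xv_iv_{i+1}$ and finish by Proposition~\ref{p:g-triangle} (valid since $|\widehat{P}|=7\ge 6$); (c) if at least five of the $P_i^0$ are nonempty and the other sub-regions and $P_c$ have been emptied by earlier cases, the segments $v_iu_i$ (with $u_i$ the point of $P_i^0$ closest to $v_i$) are clean in $\pp$ and pairwise disjoint, so Proposition~\ref{p:5-disj} applies; and (d) in the remaining configurations all of $P\setminus\widehat{P}$ is forced into a single pentagonal region adjacent to one edge --- the green pentagon $C_1$ of Figure~\ref{f:ch57}~(right) --- where I would split according to whether the interior points land in a distinguished sub-region $C_1'$, and in each sub-case write down a concrete nine-element $\ss$ (a few alternating hull edges, several of the long diagonals incident to the pentagon, and at most one segment joining two interior points) and check Lemma~\ref{l:main} for every $\ss$-pair, in the spirit of Cases~6--8 of Lemma~\ref{l:ch6}.

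I expect step (d) to be the main obstacle, for two reasons. First, one must carefully track which of the sets $P_i,P_i^-,P_i^0,P_i^+,P_c$ are eliminated by the earlier cases (using the identities $P_i^-=P_{i+1}^+$) in order to conclude that all remaining interior points genuinely lie in $C_1$. Second, once $\ss$ is fixed one must verify, for every pair of surviving intersecting segments, that one of conditions (1)--(3) of Lemma~\ref{l:main} holds, and with $|\widehat{P}|=7$ there are substantially more surviving diagonals to chase than in the $|\widehat{P}|=5,6$ arguments, so the bookkeeping is heavier and more error-prone. A secondary difficulty, arising in step (a), is checking the three defining conditions of a good-$2$-set --- cleanness of the two base segments, each having an endpoint on $\widehat{P}$, and $e_l,e_r$ lying in the opposite quadrants $L$, $R$ and being uncrossed by segments outside $\dd$ --- which is exactly what the ``furthest point'' choices are designed to guarantee.
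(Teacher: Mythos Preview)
Your $n=7$ argument is correct, though the paper manages with only seven segments: $\ss=\{e_1,\dots,e_7\}$ already suffices, the surviving distance-$3$ pairs being handled via Lemma~\ref{l:main}(3) exactly as you describe.

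For $n\ge 8$ there is a structural gap. With $m=7$ a \emph{single} nonempty ear-region $P_i$ already yields a good-$2$-set: taking $i=1$ and $x\in P_1$ furthest from $v_2v_7$, the paper uses $\dd(v_2v_3,v_6v_7)$ with $e_l=v_1x$ and $e_r=v_4v_5$; the hull edge $v_4v_5$ on the far side is precisely what seven hull vertices supply that six did not. Hence the genuinely hard case is $P_i=\emptyset$ for every $i\in[7]$. But then every $P_i^-,P_i^0,P_i^+$ is empty as well, so your case~(c) is vacuous, your case~(a) never triggers, and the $P_i^\pm,P_i^0$ decomposition that drove Lemmas~\ref{l:ch5} and~\ref{l:ch6} collapses entirely. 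Your plan does not replace it.

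What the paper does instead is introduce a new decomposition of the deep interior $P\setminus(\widehat{P}\cup\bigcup_i P_i)$ into pentagonal pieces $C_i$ (with distinguished sub-pieces $C_i'$) and pieces $Q_i$, defined via the quadrilateral regions $Q(e_i)$ and the triangles $\triangle_i=v_iv_{i+3}v_{i+4}$, and then runs five further cases on how the interior points populate \emph{these} regions. In particular your claim in (d) that the residual configuration forces all of $P\setminus\widehat{P}$ into a single pentagon $C_1$ is not what happens: the paper treats separately the patterns $|Q_i|=|Q_{i+2}|=1$, $|Q_i|=|Q_{i+1}|=1$, $|Q_i|=1$ alone, and $C_i\ne\emptyset$ with $Q_i=\emptyset$ (the last itself split according to $C_4\cap Q(e_3)$), each resolved either by a tailored good-$2$-set or by a hand-built $\ss$ of eight or nine segments. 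This $C_i/Q_i$ machinery---together with the observation that one nonempty $P_i$ already suffices---is the missing idea in your outline.
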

\begin{proof}  Our proof strategy is very similar to that in the proof of Lemma~\ref{l:ch5}.

{\bf Case 1}. Suppose that $\widehat{P}=P$. Let $\mathcal{S} := \{e_1, e_2, e_3, e_4, e_5,e_6, e_7\}$ and
let $(a,b)$ be an $\ss$-pair.
If $(a\cup b)\cap e_i=\emptyset$ for some $e_i\in \ss$, we are done by Lemma~\ref{l:main} (2). Then we can assume that
$a\cup b$ intersects each segment of $\ss$, and hence $a\cup b$ has four endpoints in $\widehat{P}$.
Then $d_{D(P)}(a,b)=3$ and there must exist $j\in [7]$ such that
$v_j\in a\cup b$ and  $v_{j+1}\in a\cup b$. By relabelling the points of $\widehat{P}$ if necessary, we may assume that $j=1$. Then $a=v_1v_4,~b= v_2v_6$, and so $f_1=v_5v_6,~f_2=v_3v_{4}$ satisfy Lemma~\ref{l:main} (3).



{\bf Case 2}. Suppose that $P_i\neq\emptyset$ for some $i\in [7]$. Assume w.l.o.g. that $i=1$, and  let $x$ be a furthest point of $P_1$ to the segment
$v_2v_7$,     
Then $\mathcal{D}(v_{2}v_{3},v_{6}v_{7})$, $e_l:=v_{1}x$ and $e_r:=v_{4}v_{5}$ form a good-2-set of $\pp$,
and we are done by Proposition~\ref{p:g2s}.

\vskip 0.1cm
By Cases 1 and 2, for the rest of the proof we assume $P_i=\emptyset$ for each $i\in[7]$ and  $n\geq 8$.
\vskip 0.1cm


We need some additional terminology.
For $i\in [7]$, let $Q(e_i)$ be the subset of $P$ that lies in the interior of the quadrilateral $v_{i+6}v_iv_{i+1}v_{i+2}$, and let $\triangle_i$ be the subset of $P$ that lies in the interior of the triangle $v_iv_{i+3}v_{i+4}$. We define $C_i:=\triangle_i\setminus(Q(e_{i+2})\cup Q(e_{i+4})\cup P_i)$, $C'_i:=C_i\cap Q(e_{i})\cap Q(e_{i+6})$ and $Q_i:=(Q(e_{i+2}) \cap Q(e_{i+4}))\setminus (P_{i+3} \cup P_{i+4})$.
Then $P=\bigcup_{i=1}^7 (P_i\cup C_i \cup Q_i)$. See Figure~\ref{f:ch57}~(right).
\vskip 0.1cm

{\bf Case 3}. Suppose that $P\setminus \widehat{P}$ has two distinct points $x$ and $y$ such that $xy$ crosses at most one segment $g$ of
${\mathcal D}_7$. If $xy$ does not cross any segment of ${\mathcal D}_7$, we let $g:=\emptyset$.
Let $\mathcal{S}:=\{e_1, e_2,\ldots ,e_7\}\cup \{xy\}\cup \{g\}$ and let $(a,b)$ be an $\ss$-pair. By Lemma~\ref{l:main} (2), it is enough to show that $\mathcal{S}$ has a segment $f$ such that $(a \cup b) \cap f = \emptyset$. Clearly, $(a\cup b)\cap e\neq\emptyset$ for each
$e\in \{e_1, e_2,\ldots ,e_7\}$, as otherwise
$f=e$ is the required. Then $a\cup b$ must have four endpoints in $\{v_1,v_2,\ldots ,v_7\}$, and so $f=xy$ is the required.


{\bf Case 4}. Suppose that $|Q_i|=1=|Q_{i+2}|$ for some $i\in[7]$. Assume w.l.o.g. that $i=1$. Let
$u_1$ (respectively, $u_3$) be the only point of $Q_1$ (respectively, $Q_3$).
Let $\mathcal{S}_1:=\{e_3,e_4, v_3u_1,v_5u_1\}$, $\mathcal{S}_2:=\{e_6, v_6u_3, v_7u_3\}$,
 $\mathcal{S}_3:=\{e_1\}$ and $\mathcal{S}:=\mathcal{S}_1\cup\mathcal{S}_2 \cup\mathcal{S}_3$.
 Let $(a,b)$ be an $\ss$-pair. By Lemma~\ref{l:main} (2), it is enough to show that $\mathcal{S}$ has a segment $f$ such that
 $(a \cup b) \cap f = \emptyset$.

We can assume that $(a\cup b)\cap e\neq\emptyset$ for $e\in\ss$, as otherwise $f=e$ is the required. Then $a\cup b$ has an
endpoint in $e_1$. Assume w.l.o.g. that $a\cap e_1\neq \emptyset$. If $a$ intersects a segment of  $\ss_2$, then $b$
must intersect each segment of $\ss_1$ and so $b\in \{v_3v_5, v_4u_1\}$. Since these imply $a\cap b=\emptyset$ a contradiction, then
$a$ must have its other endpoint in $\{v_3, v_4, v_5, u_1\}$, and so $b\cap e_6\neq \emptyset$. These facts and
$a\cap b\neq\emptyset$ imply that
the required $f$ is some of $\{e_3,u_3v_6, u_3v_7\}$.

{\bf Case 5}.  Suppose that $C_i\neq \emptyset$ and $Q_i=\emptyset$ for some $i\in [7]$. Assume w.l.o.g. that $i=1$.
 Let $x\in C_1$.  By Case 2 we can assume $\bigcup_{i=1}^7P_i=\emptyset$.

 {\bf Case 5.1}. Suppose that $C_4\cap Q(e_3)=\emptyset$. Let $\mathcal{S}_1 := \{e_1, e_2, e_4,e_7\},~\mathcal{S}_2 := \{ v_1x, v_3x, v_6x\},$
$\mathcal{S}_3 := \{ v_1v_4, v_1v_5\}$, and $\mathcal{S}:=\mathcal{S}_1\cup\mathcal{S}_2\cup \mathcal{S}_3$. Let $(a,b)$ be an $\ss$-pair.
By Lemma~\ref{l:main} (2), it is enough to show that $\mathcal{S}$ has a segment $f$ such that $(a \cup b) \cap f = \emptyset$.
We can assume that $(a \cup b) \cap e \neq\emptyset$ for $e \in \mathcal{S}$, as otherwise we are done.

$\bullet$ Suppose
that $v_1\in a\cup b$. Assume w.l.o.g. that $v_1\in a$. Then $v_j\in b$ for some $j\in \{4, 5\}$.
Since if $a=v_1v_3$ (resp. $v_1v_6$) then $a\cap b\neq \emptyset$ implies that $f=v_6x$ (resp. $f=v_3x$)
is the required, then we can assume that $a\not\in \{v_1v_3, v_1v_6\}$. This implies that $b$ has
 its other endpoint in $\{v_2, v_3\}$, and so $a\cap b\neq \emptyset$, $Q_1\cup (C_4\cap Q(e_3))=\emptyset$ and
 $a\notin \mathcal{S}_3$ together imply that $f=v_6x$ is the required.

$\bullet$ Suppose that $v_1\notin a\cup b$. Then $v_2$ and $v_7$ must be endpoints of $a\cup b$. If $v_2v_7\in \{a,b\}$, then the required $f$
is some element of $\{v_3x, e_4, v_6x\}$. We may then assume w.l.o.g. that $v_2\in a$ and $v_7\in b$. Since $e_4\cap g\neq \emptyset$ for some $g\in \{a,b\}$, then the required $f$ is some element of $\{v_1x, v_3x, v_6x\}$.

 {\bf Case 5.2}. Suppose that $C_4\cap Q(e_3)\neq\emptyset$. Assume first that $|C_4\cap Q(e_3)|\ge 2$.
 Let $x$ and $y$ be two distinct points of $C_4\cap Q(e_3)$. Then $xy$ crosses at most $v_3v_6\in {\mathcal D}_7$,
 and we are done by Case 3.

Assume now that $|C_4\cap Q(e_3)|= 1$. Let $u \in C_4\cap Q(e_3)$.
Let $\mathcal{S}_1 := \{e_1, e_5, e_6,e_7\},~\mathcal{S}_2 := \{ v_3u, v_4u, e_3\},~
 \mathcal{S}_3 := \{ v_1v_4, v_3v_6\}$, and $\mathcal{S}:=\mathcal{S}_1\cup\mathcal{S}_2\cup \mathcal{S}_3$.
 Let $(a,b)$ be an $\ss$-pair. By Lemma~\ref{l:main} (2), it is enough to show that $\mathcal{S}$ has a segment $f$
 such that $(a \cup b) \cap f = \emptyset$.

 We can assume that $(a \cup b) \cap e_3 \neq\emptyset$, as otherwise $f=e_3$ is the required.
 Assume w.l.o.g. that $a\cap e_3 \neq\emptyset$. Suppose first that $v_3\in a$. If $a\cap \{v_1, v_2, v_5, v_7\}=\emptyset$, then
 $b=v_1v_6$. This last implies $a\cap b=\emptyset$, a contradiction.

 Thus we must have that
 $a\cap \{v_1, v_2, v_5, v_7\}\neq \emptyset$. If $a\cap \{v_1, v_2\}\neq \emptyset$ then $b$ must intersect
 each of $\{e_5, e_6, v_4u\}$. This last implies $a\cap b=\emptyset$, a contradiction. If $a=v_3v_5$, then $a\cap b\neq \emptyset$
 implies that some $f\in \{e_1, e_6\}$ is the required.  Similarly, if $a=v_3v_7$, then
 some $f\in \{e_1, e_5, v_4u\}$ is the required. The case in which $v_4\in a$ can be handled in an analogous way.


{\bf Case 6}. Suppose that $|Q_i|=1=|Q_{i+1}|$ for some $i\in[7]$. Assume w.l.o.g. that $i=1$.
 By Case 4, we can assume that $Q_3\cup Q_4\cup Q_6\cup Q_7=\emptyset$.
Then either $C_3\cup C_4\cup C_6\cup C_7=\emptyset$ or we are done by Case 5.
 For $j\in \{1,2\}$, let $u_j$ be the only point of $Q_j$.

 If there is $x\in C'_5$, then $xu_1$ crosses at most $v_2v_5\in {\mathcal D}_7$,
 and we are done by Case 3. Thus we can assume that $C'_5=\emptyset$. Then $Q(e_5)=\{u_1, u_2\}$.

Let $x$ be the first point of $Q(e_5)$ that we find when we rotate $\ell(v_4v_6)$ clockwise around $v_4$.
If $x=u_1$ (resp. $x=u_2$), then $v_3v_5$ (resp. $v_5v_7$) is the only segment of $\pp$ that crosses $v_4x$ (resp. $v_6x$).
 Then $\mathcal{D}(v_{5}v_{6},v_{2}v_{3})$ (resp. $\mathcal{D}(v_{1}v_{7},v_{4}v_{5})$), $e_l:=v_{4}x$ (resp. $e_l:=v_{6}x$) and $e_r:=v_{1}v_{7}$ (resp. $e_r:=v_{2}v_{3}$) form a good-2-set of $\pp$, and we are done by Proposition~\ref{p:g2s}.


{\bf Case 7}. Suppose that $|Q_i|=1$ for some $i\in[7]$. Assume w.l.o.g. that $i=1$. Let $x$ be the only point
in $Q_1$. By Cases 3, 4, 5, 6, we can assume that $C'_4\cup C'_5\cup Q_2\cup Q_3 \cup Q_6 \cup Q_7\cup C_2\cup C_3 \cup C_6 \cup C_7=\emptyset$. Then $Q(e_3)\cup Q(e_5)=\{x\}$ and $v_4v_6$ is the only segment of $\pp$ that crosses $v_5x$. Then $\mathcal{D}(v_{3}v_{4},v_{6}v_{7})$, $e_l:=v_{1}v_2$ and $e_r:=v_{5}x$ form a good-2-set of $\pp$, and we are done by Proposition~\ref{p:g2s}.
\end{proof}

\subsection{Mutual-visibility of $D(P)$ when $|\widehat{P}|\geq 8$}\label{sec:visibility>7}

\begin{lemma}\label{l:ch8}
If $m:=|\widehat{P}|\in \{8,9\}$, then $\mu(D(P))\geq \binom{n}{2}-8$.
\end{lemma}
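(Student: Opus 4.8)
The plan is to reduce the case $|\widehat{P}|\in\{8,9\}$ to the sufficient conditions already developed in Section~\ref{sec:suficient}, most notably Proposition~\ref{p:5-disj} (five pairwise disjoint clean segments give $\mu\ge\binom n2-5$) and Proposition~\ref{p:g2s} (a good-2-set gives $\mu\ge\binom n2-8$). The guiding observation is that when $m=|\widehat P|\ge 8$ there is a lot of room on the convex hull: the hull edges $e_1,\dots,e_m$ are automatically clean in $\pp$, and among any $m\ge 8$ of them we can find several that are pairwise disjoint (two hull edges are disjoint iff they are non-consecutive on $CH(P)$). Concretely, with $m=8$ the edges $e_1,e_3,e_5,e_7$ are four pairwise disjoint clean segments, and with $m=9$ we even get five, namely $e_1,e_3,e_5,e_7$ together with one more non-adjacent edge — wait, $e_9$ is adjacent to both $e_1$ and $e_8$; but $e_1,e_3,e_5,e_7$ and $e_9$ — $e_9=v_9v_1$ shares $v_1$ with $e_1$, so for $m=9$ one instead takes $e_1,e_3,e_5,e_7$ and notes $e_9$ fails, so five pairwise disjoint hull edges do \emph{not} come for free when $m=9$ is odd. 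This is exactly why the bound in this lemma is $\binom n2-8$ rather than $\binom n2-5$: we will not in general have five pairwise disjoint clean segments, but we will always be able to build a good-2-set.

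The key steps, in order, are as follows. First I would dispose of the case $\widehat P=P$ (i.e.\ $n=m\in\{8,9\}$) directly: here $\pp=\dd_m$, and for an $\ss$-pair $(a,b)$ with $\ss=\{e_1,\dots,e_m\}$, since $a\cup b$ has at most four endpoints among the $v_i$ and $m\ge 8$, some hull edge $e_i$ is disjoint from $a\cup b$, so Lemma~\ref{l:main}(2) applies and we even get $\mu\ge\binom n2-m\ge\binom n2-9$ — but to get the sharper $\binom n2-8$ one checks as in Case~1 of Lemma~\ref{l:ch7} that whenever all of $e_1,\dots,e_m$ meet $a\cup b$ we have $d_{D(P)}(a,b)=3$ and can drop to a smaller $\ss$; alternatively, since $m\ge8$ one picks $\ss$ to be $m-? $ many hull edges arranged so the argument still closes. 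The cleanest route is: if $P\setminus\widehat P=\emptyset$, take four pairwise disjoint clean hull edges plus a good-2-set built from two opposite hull edges — see below — so this subcase folds into the general one. Second, and this is the heart of the proof, assuming $P\setminus\widehat P\neq\emptyset$ (or handling everything uniformly), pick two hull edges $e_i,e_j$ that are ``far apart'' on $CH(P)$ — e.g.\ $e_1$ and $e_{\lfloor m/2\rfloor+1}$ — so that $e_iv\ldots$ hmm, rather: set $uv:=e_i=v_iv_{i+1}$ and $xy:=e_j=v_jv_{j+1}$ with $i,j$ chosen so that $\dd(e_i,e_j)$ has its two diagonals $v_iv_j$ (or $v_iv_{j+1}$) internally disjoint from $e_i,e_j$ and so that each of the two opposite quadrants $L,R$ determined by these diagonals contains a whole hull edge of $\pp$ in its interior that is not crossed by any segment outside $\dd(e_i,e_j)$ — for $m\ge 8$ there are at least $m-4\ge 4$ hull edges other than $e_i,e_j$ and the four hull edges ``adjacent'' to them, split among the two quadrants, so an edge $e_l\subset L$ and $e_r\subset R$ exist and are clean. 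Since hull edges $e_i,e_j$ are clean in $\pp$ and have (both) endpoints in $\widehat P$, this $\dd(e_i,e_j)$ together with $e_l,e_r$ is a good-2-set, and Proposition~\ref{p:g2s} finishes.

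The main obstacle I anticipate is the verification, for $m=9$ (and the boundary $m=8$), that one can actually choose the index $j$ relative to $i$ so that \emph{both} opposite quadrants $L$ and $R$ receive a clean hull edge \emph{and} the chosen diagonals of $\dd(e_i,e_j)$ are the ones internally disjoint from $e_i,e_j$ — this is a small but fiddly combinatorial-geometric case check depending on whether the four points $v_i,v_{i+1},v_j,v_{j+1}$ are in convex position (quadrilateral hull of $\dd$) or not (triangular hull), cf.\ Figure~\ref{f:k4}. I would handle it by taking $i=1$ and $j\in\{4,5\}$ for $m=8$ and $j=5$ for $m=9$, so that on one side of the diagonal line pair we have the hull edges $e_2,e_3$ and on the other the edges $e_6,\dots,e_m$ (four or five of them), each of which is clean and lies in the correct open quadrant; picking $e_l\in\{e_2,e_3\}$ and $e_r$ among the rest, at least one valid choice survives the convex-position case analysis. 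Once the good-2-set is exhibited, no further computation is needed. A secondary, easier obstacle is making sure that ``each $e_i$ is clean in $\pp$'' and ``lies in the interior of the stated quadrant'' are genuinely forced by $P$ being in general position with $m=|\widehat P|\ge 8$ — this follows because any segment crossing a hull edge would need an endpoint outside $CH(P)$, which is impossible, and because a hull edge $e_l$ with both endpoints strictly inside a quadrant lies entirely in that (convex) quadrant.
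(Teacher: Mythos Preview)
Your approach—build a good-2-set from two far-apart hull edges and invoke Proposition~\ref{p:g2s}—is exactly the paper's; the paper simply takes $\dd(v_1v_2,v_5v_6)$ with $e_l:=v_7v_8$ and $e_r:=v_3v_4$, which works uniformly for $m\in\{8,9\}$. Your anticipated ``obstacle'' is illusory: any four points of $\widehat P$ are automatically in convex position (so the diagonals are $v_1v_5,\,v_2v_6$ and no case split is needed), and there is no reason to treat $P=\widehat P$ separately since the construction uses no interior point.
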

\begin{proof} Let $v_1, v_2, \ldots , v_m$ be the points in $\widehat{P}$, and suppose that they appear in this cyclic order on
$\widehat{P}$.
Then $\dd(v_1v_2, v_5v_6)$, $e_l:=v_7v_8$ and $e_r:=v_3v_4$ form a good-2-set of $\pp$, and
we are done by  Proposition~\ref{p:g2s}.
\end{proof}

\begin{lemma}\label{l:ch>9}
  If $m:=|\widehat{P}|\geq 10$, then $\mu(D(P))\geq \binom{n}{2}-5.$
\end{lemma}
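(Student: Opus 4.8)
The plan is to reduce to Proposition~\ref{p:5-disj} by exhibiting $5$ pairwise disjoint clean segments in $\pp$. Since $m=|\widehat P|\ge 10$, the boundary edges $e_1,e_2,\dots,e_m$ of $CH(P)$ are all clean in $\pp$ (no segment of $\pp$ can cross a hull edge). So the only issue is to pick $5$ of them that are pairwise disjoint, i.e. that share no endpoint. Two hull edges $e_i=v_iv_{i+1}$ and $e_j=v_jv_{j+1}$ are disjoint precisely when $\{i,i+1\}\cap\{j,j+1\}=\emptyset$ (indices mod $m$), which is an independence condition in the cycle $C_m$ on the index set $\{1,\dots,m\}$. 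Thus it suffices to find an independent set of size $5$ in $C_m$, and for this we only need $m\ge 10$: take, for instance, $e_1,e_3,e_5,e_7,e_9$, which are pairwise non-adjacent in $C_m$ as long as $m\ge 10$ (the potential wrap-around conflict between $e_9$ and $e_1$ is avoided because $m\ge 10$ forces index $9$ and index $1$ to be non-consecutive and distinct). These five hull edges are clean in $\pp$ and pairwise disjoint, so $\ss:=\{e_1,e_3,e_5,e_7,e_9\}$ satisfies the hypothesis of Proposition~\ref{p:5-disj}, giving $\mu(D(P))\ge\binom n2-5$.

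The only minor point to check carefully is the wrap-around: when $m=10$, $e_9=v_9v_{10}$ and $e_1=v_1v_2$ are disjoint since $\{9,10\}\cap\{1,2\}=\emptyset$; and the edge $e_{10}=v_{10}v_1$ (which does share endpoints with both $e_9$ and $e_1$) is simply not among our chosen five. For larger $m$ there is even more slack, so the same explicit choice works verbatim. Hence there is no real obstacle here; the lemma is an immediate consequence of Proposition~\ref{p:5-disj} once one observes that hull edges are automatically clean and that $C_m$ with $m\ge 10$ has independence number at least $5$.

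\begin{proof}
Let $v_1,v_2,\dots,v_m$ be the points of $\widehat P$ in clockwise cyclic order, and for $i\in[m]$ let $e_i:=v_iv_{i+1}$, addition mod $m$. Each $e_i$ lies on $CH(P)$, so no segment of $\pp$ crosses $e_i$; that is, every $e_i$ is clean in $\pp$. Consider $\ss:=\{e_1,e_3,e_5,e_7,e_9\}$, which is well defined since $m\ge 10$. For distinct $i,j\in\{1,3,5,7,9\}$ the index sets $\{i,i+1\}$ and $\{j,j+1\}$ are disjoint (the only way two of these could meet is through the wrap-around pair $\{9,10\}$ and $\{1,2\}$, but $\{9,10\}\cap\{1,2\}=\emptyset$ when $m\ge 10$), so $e_i\cap e_j=\emptyset$. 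Thus $\ss$ consists of $5$ clean, pairwise disjoint segments of $\pp$, and Proposition~\ref{p:5-disj} yields $\mu(D(P))\ge\binom n2-5$.
\end{proof}
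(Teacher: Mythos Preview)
Your proof is correct and takes essentially the same approach as the paper: both select the hull edges $e_1,e_3,e_5,e_7,e_9$ (clean and pairwise disjoint since $m\ge 10$) and apply Proposition~\ref{p:5-disj}. Your version is slightly more detailed in checking the wrap-around disjointness, but the argument is identical.
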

 \begin{proof}
 Let $v_1, v_2, \ldots , v_m$ be the points in $\widehat{P}$, and suppose that they appear in this cyclic order on $\widehat{P}$. For $i\in [m-1]$, let $e_i:=v_iv_{i+1}$. Since $\{e_1,e_3,e_5,e_7,e_9\}$ is a set of pairwise disjoint segments that are clean in $\mathcal{P}$, we are done by
 Proposition~\ref{p:5-disj}.
  \end{proof}

\noindent{\em Proof of Theorem~\ref{t:main}}. It follows directly from
Lemmas~\ref{l:ch3},~\ref{l:ch4},~\ref{l:ch5},~\ref{l:ch6},~\ref{l:ch7},~\ref{l:ch8}, and~\ref{l:ch>9}. \hfill $\square$

\section{Theorem~\ref{t:main} is best possible: Proof of Proposition~\ref{p:cacerola}}\label{sec:cacerola}

Let $P:=\{p_1, p_2, \ldots , p_7\}$ be the 7-point set given in Proposition~\ref{p:cacerola} and depicted in Figure~\ref{fig:Ejem}~($a$).
Clearly, $P$ is in general position and  $|\widehat{P}|=6$. Then $\mu(D(P)) \ge \binom{7}{2}-9=12$ by Case 2 of~Lemma~\ref{l:ch6}.

To prove that $\mu(D(P)) \leq 12$, we implemented an algorithm (described below) in Mathe\-matica (Wolfram Language)~\cite{mate} that demonstrates the non-existence of a mutual-visibility set of size 13 in $V(D(P))$, as follows:
For each 13-subset $\mathcal{P}'$ of $V(D(P))$, the algorithm verifies that every pair of non-adjacent vertices
$a, b \in \mathcal{P}'$ admits a shortest $a$-$b$ path whose interior vertices all belong to
$\mathcal{S} = V(D(P)) \setminus \mathcal{P}'$.
 Having exhaustively verified this property for all
$203490=\binom{21}{13}$ $13$-subsets of $V(D(P))$, we observe that no such $\mathcal{P}'$ exists.
Specifically, the program always identifies a pair of non-adjacent vertices $a, b \in \mathcal{P}'$ such that any shortest $a$-$b$ path in $D(P)$ contains at least one interior vertex belonging to $\mathcal{P}'$. Hence, we conclude that $D(P)$ does not contain mutual-visibility sets of size $13$. \hfill $\square$

The interested reader can find and/or execute in~\cite{program} the program that we have used to verify that $D(P)$ has
no mutually visibility sets of size $13$.

\begin{algorithm}
\KwIn{ $D(P)=(V(D(P)), E(D(P)))$.}
\KwOut{ A mutual-visibility set $\mathcal{P}'$ of 13 elements or NIL if such $\mathcal{P}'$ does not exist. }
Compute the $\binom{21}{13}$ 13-subsets $\mathcal{P}'$ of $V(D(P))$\;
\ForEach{13-subset $\mathcal{P}'$ of $V(D(P))$  }
{
     $\mathcal{S} = V(D(P)) \setminus \mathcal{P}'$\;

    \For{ every $\ss$-pair $(a,b)$ }
        {

        \If{ $d_{D(P)}(a,b)==2$}
        {
             flag = 0\;
             \For{ every $c \in \mathcal{S}$}
             {
                 \If{ $a-c-b$ is a path }
                 {
                    {\rm flag = 1}\;
                    break\;
                 }
             }

                 \If{ {\rm flag == 0}}
                 {
              		{\bf print}  $\mathcal{P}'$ is not a mutual-visibility set due to the pair of non-adjacent vertices $a, b$\;
                   {\bf return} { NIL}\;
                 }
        }

 	    \If{ $d_{D(P)}(a,b)==3$}
        {
			 flag = 0\;
             \For {every 2-set $\{c,c'\} \subset \mathcal{S}$}
             {
                 \If{$a-c-c'-b$ is a path {\bf or} $a-c'-c-b$  is a path}
                 {
                     {\rm flag = 1}\;
                     break\;
                 }
             }
             \If{ {\rm flag == 0}}
             {
                {\bf print} $\mathcal{P}'$ is not a mutual-visibility set due to the pair of non-adjacent vertices $a, b$\;
                {\bf return} { NIL}\;
             }
       }
   }
      {\bf print} $\mathcal{P}'$ is a mutual-visibility set\;
      {\bf return} $\mathcal{P}'$\;
}
\end{algorithm}


\section{Tight asymptotic bounds for $\mu(D(P))$}\label{sec:asymptotic}
Our aim in this section is to show Theorem \ref{t:asintotic} and Propositions \ref{p:Cn} and \ref{p:double}.\\

\noindent{\em Proof of Theorem~\ref{t:asintotic}}.  We first show that if  $n\geq 9$, then $\mu(D(P))\leq {n \choose 2} - 4$.
Let $x_1y_1, x_2y_2, x_3y_3$ be distinct segments of $\pp$, $\ee:=\{x_1y_1, x_2y_2, x_3y_3\}$
and $V(\ee):=\ee \cap P$. We note that points of $V(\ee)$ with distinct subscript can be the same.

 Since $\diam(D(P))=2$ by Theorem~\ref{p:diam}, it is enough to show that $\pp':=\pp\setminus \ee$
contains a pair of segments $a$ and $b$ such that ($i$) $a\cap b\neq \emptyset$ and ($ii$) $(a\cup b)\cap x_iy_i\neq \emptyset$ for each $i\in [3]$.

Since if $\ee$ forms a triangle, then $a:=x_1z$ and $b:=y_1z$ with $z\in P\setminus V(\ee)$ satisfy ($i$) and ($ii$), we can assume that $\ee$ is not a triangle. Then there is a segment $z_1z_2\in \pp'$
such that $z_1\in \{x_1, y_1\}$ and $z_2\in \{x_2, y_2\}$. Since $z_1z_2\notin\ee$, then there is
$z_3\in \{x_3,y_3\}\setminus \{z_1,z_2\}$, and so  $a:=z_1z_2$ and $b:=z_1z_3$ satisfy ($i$) and ($ii$).

We now show the lower bound. By Erd\H{o}s-Szekeres theorem~\cite{erdos} there is a smallest integer
$f^{ES}_2(10)$ such that if $|P|\geq f^{ES}_2(10)$, then $P$ has a 10-subset, say  $Q$, in convex position.

Let $m:=|\widehat{Q}|\geq 10$. Suppose that  $v_1,v_2,\ldots ,v_{m}$ are the points of $\widehat{Q}$, and that they appear in this cyclic order.
Let $\ss:=\{v_{2i-1}v_{2i}~:~i\in[5]\}$. Then the 5 segments in $\ss$ are pairwise disjoint. By Lemma~\ref{l:main}~($2$), it is enough to show that if
$(a,b)$ is an $\ss$-pair, then $\ss$  has a segment $f$ such that $(a\cup b)\cap f=\emptyset$.

It is easy to see that if $h\in \pp\setminus \ss$, then $h$ intersects at most 2 segments of $\ss$. Thus if $a,b\in \pp\setminus \ss$,
 $a\cup b$ is disjoint from at least one segment $f$ of $\ss$, and we are done. \hfill $\square$\\

\noindent{\em Proof of Proposition~\ref{p:Cn}}.  By Lemma~\ref{l:ch>9} we need only to show
$\mu(D(C_{n})) \leq  {n \choose 2} - 5$. Let $x_1y_1, x_2y_2, x_3y_3, x_4y_4$ be distinct segments of $\cc_n$,
$\ee:=\{x_1y_1, x_2y_2, x_3y_3, x_4y_4\}$ and $V(\ee):=\ee \cap C_n$.
We remark that points of $V(\ee)$ with distinct subscript can be the same.

Since $\diam(D(C_n))=2$ by Theorem~\ref{p:diam}, it is enough to show that $\cc':=\cc_n\setminus \ee$
contains a pair of segments $a$ and $b$ satisfying ($i$) $a\cap b\neq \emptyset$ and ($ii$) $(a\cup b)\cap x_iy_i\neq \emptyset$ for each $i\in [4]$.

$\bullet$ Suppose that $\ee$ has three segments forming a triangle, say
$\triangle:=x_1y_1, x_2y_2,  x_3y_3$. Then either $x_4$ or $y_4$ is not a corner of $\triangle$,  suppose w.l.o.g. that $x_4$ is not a corner.  Then $\triangle$ has an edge $xy$ such that $y_4\notin \{x,y\}$. Then $a:=x_4x$ and $b:=x_4y$
satisfy ($i$) and ($ii$).

$\bullet$ Suppose that $\ee$ has no triangles. Then there is a segment $z_1z_2\in \cc'$
such that $z_1\in \{x_1, y_1\}$ and $z_2\in \{x_2, y_2\}$. Similarly, there is a segment $z_3z_4\in \cc'$
such that $z_3\in \{x_3, y_3\}$ and $z_4\in \{x_4, y_4\}$. We note that $z_1z_2$ and $z_3z_4$ satisfy ($ii$).

\noindent $-$ If $z_1z_2\cap z_3z_4\neq \emptyset$, then $a:=z_1z_2$ and $b:=z_3z_4$ satisfy ($i$) and ($ii$).

\noindent $-$ If $z_1z_2\cap z_3z_4=\emptyset$, then $a:=z_1z_3$ and $b:=z_2z_4$ or $a:=z_1z_4$ and $b:=z_2z_3$ satisfy ($i$) and ($ii$).

\noindent $-$ If $z_1z_2=z_3z_4$, then $a:=z_1z_2$ and $b:=z_1u$ with $u\in C_n\setminus V(\ee)$ satisfy ($i$) and ($ii$). \hfill $\square$\\

\noindent{\em Proof of Proposition~\ref{p:double}}. By the first part of the proof of Theorem~\ref{t:asintotic} we need only to show
that $\mu(D(C_{p,q})) \geq  {n \choose 2} - 4.$ Let $a_1, a_2, \ldots, a_p$ be the points of $C_{p,q}\subset A$ from left to right. Similarly, let $b_1, b_2, \ldots, b_q$ be the points of $C_{p,q}\subset B$ from left to right.  Let $h_1:= b_1 b_{2}$, $h_2:= b_3b_{4}, h_3:= b_5 b_{6}$ and ${\mathcal B}:=\{h_1, h_2, h_3\}$.

Let  ${\mathcal  S} := \{a_1a_2\}\cup {\mathcal B}$. We remark that the segments in $\ss$ are clean in $\cc_{p,q}$ and pairwise disjoint.
Let $\cc':=\cc_{p,q}\setminus \mathcal{S}$. Consider two distinct segments $a,b\in \cc'$ such that $a\cap b\neq\emptyset$. By Lemma~\ref{l:main}~($2$), it is enough to show that $\ss$ has a segment $f$ such that $(a\cup b)\cap f=\emptyset$.

We may assume that $a\cup b$ intersects $a_1a_2$, as otherwise $f=a_1a_2$ is the required. Assume w.l.o.g. that $a$ intersects to $a_1a_2$.
This last implies that $a$ is disjoint from at least two segments of ${\mathcal B}$, say $h_i$ and $h_j$. Then $b$ must intersect to both
$h_i$ and $h_j$, as otherwise $f\in \{h_i, h_j\}$. Then $b\cap h_k=\emptyset$ for
$h_k\in {\mathcal B}\setminus \{h_i, h_j\}$. Since if $a\cap h_k=\emptyset$, then $f=h_k$ is the required,  we must have that $a\cap h_k\neq\emptyset$, which implies $a\cap b=\emptyset$, a contradiction. \hfill $\square$\\

\subsection*{Funding:} This research was supported by the Instituto Politecnico Nacional (IPN) through grant SIP/20253599.






\begin{thebibliography}{00}

\bibitem{birgit} O.~Aichholzer, J.~Kyn\v{c}l, M.~Scheucher, and B.~Vogtenhuber. On $4$--Crossing-Families in Point Sets
and an Asymptotic Upper Bound. In {\em Proceedings of the 37th European Workshop on Computational Geometry (EuroCG 2021)}
 pp. 1--8 (2021).

\bibitem{gaby} G. Araujo, A. Dumitrescu, F. Hurtado, M. Noy, and J. Urrutia. On the chromatic number of some geometric type Kneser graphs. {\em Comput. Geom.}, 32(1):59--69, 2005.

\bibitem{bujtas} C. Bujt\'as, S. Klav\v{z}ar, J. Tian, Total mutual-visibility in Hamming graphs, Opuscula Math. 45, no. 1 (2025), 63-78, https://doi.org/10.7494/OpMath.2025.45.1.63


\bibitem{mv2} S. Cicerone, G. Di Stefano and S. Klav\v{z}ar, On the mutual visibility in Cartesian products and triangle-free graphs,
{\em Appl. Math. Comput.} {\bf 438} (2023), Paper No. 127619, 9 pp.

\bibitem{mv3} S. Cicerone, G. Di Stefano, S.Klav\v{z}ar and I. G. Yero, Mutual-visibility problems on graphs of diameter two,
{\em European J. Comb.} {\bf 120} (2024), Paper No. 103995.

\bibitem{erdos} P.~Erd\H{o}s, G.~Szekeres, A combinatorial problem in geometry. Compositio Mathematica, Volume 2 (1935), pp. 463-470.


\bibitem{jesus} J.~J. Garc\'ia-Davila, J.~Lea\~nos, M. Lome\'i, and L.~R.~R\'ios-Castro, The maximum chromatic number of the disjointness graph of segments on $n$-point sets in the plane with $n\leq16$. {\em Bol. Soc. Mat. Mex.}, (2024) 30, 85.

\bibitem{mv4}  M.~Dettlaff, M.~Lema\'nska, J. A. Rodr\'iguez-Vel\'azquez and I. G. Yero, Mobile mutual-visibility sets in graphs.
{\em Ars Mathematica Contemporanea}, {\bf 24 (1)} (2025).

\bibitem{mv5} G. B.~Ekinci and C.~Bujt\'as, Mutual-visibility problems in Kneser and
Johnson graphs. {\em Ars Mathematica Contemporanea}, {\bf 44 (1)} (2025).


\bibitem{us2} Espinoza-Valdez, A.; Lea\~nos, J.; Ndjatchi, C.; R\'ios-Castro, L.M. An Upper Bound Asymptotically Tight for the
Connectivity of the Disjointness Graph of Segments in the Plane. {\em Symmetry} 2021, {\bf 13}, 1050. https://doi.org/10.3390/sym13061050

\bibitem{lomeli} R.~Fabila-Monroy,  C.~Hidalgo-Toscano, J.~Lea\~nos, and M. Lomel\'i-Haro,
The Chromatic Number of the Disjointness Graph of the Double Chain.
In {\em Discrete Mathematics and Theoretical Computer Science}, {\bf 22}:1, 2020, \#11.

\bibitem{ruy} R.~Fabila-Monroy and D.~R. Wood.
The chromatic number of the convex segment disjointness graph.
In {\em Computational geometry}, volume 7579 of {\em Lecture Notes in
  Comput. Sci.}, pages 79--84. Springer, Cham, 2011.

\bibitem{jonsson} J.~Jonsson. The exact chromatic number of the convex segment disjointness graph, 2011.

\bibitem{kneser} M. Kneser. {\it Jahresbericht der Deutschen Mathematiker-Vereinigung}, volume
58, page 27. 1956. Aufgabe 360.

\bibitem{us1} J.~Lea\~nos, C.~Ndjatchi, and L.~R.~R\'ios-Castro, On the connectivity of the disjointness graph of segments of point sets in general position in the plane. {\it Discrete Mathematics and Theoretical Computer Science (DMTCS)}, vol. $24:1, 2022, \#15,1-16$

\bibitem{hamiltonicity} J.~Lea\~nos, C.~Ndjatchi, and L.~R.~R\'ios-Castro, Disjointness graphs of segments in ${\mathbb R}^2$
are almost all hamiltonian. {\em Discrete Applied Mathematics}, 342 (2024) 168-173.

\bibitem{program} L.~R.~R\'ios-Castro, \url{https://tinyurl.com/bdfa534b}

\bibitem{lovasz} L. Lov\'asz, Kneser's conjecture, chromatic number, and homotopy, {\it  J. Combin. Theory Ser. A,} 25 (1978), 319--324.

\bibitem{pach-tardos-toth} J. Pach, G. Tardos,  and G. T\'oth,  Disjointness graphs of segments, in: {\it Boris Aronov and Matthew J.
Katz (eds.), {\it 33th International Symposium on Computational Geometry (SoCG 2017)} , vol. 77 of Leibniz
International Proceedings in Informatics (LIPIcs)}, 59:1--15, Leibniz-Zentrum f\"{u}r Informatik, Dagstuhl, 2017.

\bibitem{mv1} G. D. Stefano, Mutual visibility in graphs, {\em Appl. Math. Comput.} {\bf 419} (2022). Article 126850.

\bibitem{mate} Wolfram Research, Inc., { Mathematica, Version 12.0, Champaign, IL}, (2019).



 \end{thebibliography}



\end{document}